\crefname{theorem}{Theorem}{Theorems}
\crefname{fact}{Fact}{Facts}
\crefname{note}{Note}{Notes}
\crefname{lemma}{Lemma}{Lemmas}
\crefname{alg}{Algorithm}{Algorithms}
\crefname{remark}{Remark}{Remarks}
\crefname{example}{Example}{Examples}
\crefname{prop}{Proposition}{Propositions}
\crefname{conj}{Conjecture}{Conjectures}
\crefname{cor}{Corollary}{Corollaries}
\crefname{defn}{Definition}{Definitions}
\crefname{equation}{\!\!}{\!\!} 
\newcommand{\bd}[1]{\mathbf{#1}}
\newtheorem{theorem}{Theorem}[section]
\newtheorem{lemma}[theorem]{Lemma}
\newtheorem{proposition}[theorem]{Proposition}
\newtheorem{corollary}[theorem]{Corollary} 
\theoremstyle{definition}  
\newtheorem{definition}[theorem]{Definition}
\newtheorem{remark}[theorem]{Remark}
\newcommand{\END}{\mathcal{E}nd}
\newcommand{\End}{\operatorname{End}}
\newcommand{\Hom}{\operatorname{Hom}}
\newcommand{\gl}{\mathfrak{gl}}
\newcommand{\Z}{\mathbb{Z}}
\newcommand{\C}{\mathbb{C}}
\newcommand{\K}{\mathbb{K}}
\newcommand{\cat}{\mathcal}
\newcommand{\ep}{\varepsilon}
\newcommand{\arup}[1]{\stackrel{#1}{\longrightarrow}}
\newcommand{\larup}[1]{\stackrel{#1}{\longleftarrow}}
\newcommand{\up}{\uparrow}
\newcommand{\down}{\downarrow}
\newcommand{\ob}[1]{\mathsf{#1}}
\newcommand{\wrd}{\langle\up,\down\rangle}
\newcommand{\gr}{\operatorname{gr}}
\newcommand{\unit}{\mathds{1}}
\newcommand{\0}{\bar{0}}
\renewcommand{\1}{\bar{1}}
\newcommand{\ibar}{{\bar{i}}}
\newcommand{\jbar}{{\bar{j}}}
\newcommand{\OBC}{\mathcal{OBC}}
\newcommand{\AOBC}{\mathcal{AOBC}}
\newcommand{\OB}{\mathcal{OB}}
\newcommand{\AOB}{\mathcal{AOB}}
\renewcommand{\k}{\Bbbk}
\newcommand{\ev}{\operatorname{ev}}
\newcommand{\coev}{\operatorname{coev}}
\newcommand{\fg}{\mathfrak{g}}
\newcommand{\fq}{\mathfrak{q}}
\newcommand{\fh}{\mathfrak{h}}
\newcommand{\fn}{\mathfrak{n}}
\newcommand{\fb}{\mathfrak{b}}
\newcommand{\Id}{\operatorname{Id}}
\newcommand{\chr}{\operatorname{char}}
\newcommand{\scat}{\mathfrak{scat}}
\newcommand{\svec}{\mathfrak{svec}}
\newcommand{\usvec}{{\underline{\mathfrak{svec}}}}
\newcommand{\gsvec}{\mathfrak{gsvec}}
\newcommand{\ugsvec}{{\underline{\mathfrak{gsvec}}}}
\newcommand{\fsvec}{\mathfrak{fsvec}}
\newcommand{\ufsvec}{{\underline{\mathfrak{fsvec}}}}
\newcommand{\gsmod}{\fg\operatorname{-smod}}
\newcommand{\hw}{\hat{u}}
\newcommand{\clr}{rgb:black,1;blue,4;red,1}
\newcommand{\wdot}{ node[circle, draw, fill=white, thick, inner sep=0pt, minimum width=4pt]{}}
\newcommand{\bdot}{ node[circle, draw, fill=\clr, thick, inner sep=0pt, minimum width=4pt]{}}
\newcommand{\fulldot}{
    \begin{tikzpicture}[color=\clr]
    \draw (0,0) \bdot;
    \end{tikzpicture}
}
\newcommand{\emptydot}{
    \begin{tikzpicture}[color=\clr]
    \draw (0,0) \wdot;
    \end{tikzpicture}
}
\newcommand{\undot}[1]{\operatorname{undot}({#1})}
\newcommand{\p}[1]{|{#1}|}
\newcommand{\lcap}{
\begin{tikzpicture}[baseline = 3pt, scale=0.5, color=\clr]
        \draw[-,thick] (1,0) to[out=up, in=right] (0.53,0.5) to[out=left, in=right] (0.47,0.5);
        \draw[->,thick] (0.49,0.5) to[out=left,in=up] (0,0);
\end{tikzpicture}
}
\newcommand{\lcup}{
\begin{tikzpicture}[baseline = 6pt, scale=0.5, color=\clr]
        \draw[-,thick] (1,1) to[out=down, in=right] (0.53,0.5) to[out=left, in=right] (0.47,0.5);
        \draw[->,thick] (0.49,0.5) to[out=left,in=down] (0,1);
\end{tikzpicture}
}
\newcommand{\rcap}{
\begin{tikzpicture}[baseline = 3pt, scale=0.5, color=\clr]
        \draw[<-,thick] (1,0) to[out=up, in=right] (0.53,0.5) to[out=left, in=right] (0.47,0.5);
        \draw[-,thick] (0.49,0.5) to[out=left,in=up] (0,0);
\end{tikzpicture}
}
\newcommand{\rcup}{
\begin{tikzpicture}[baseline = 6pt, scale=0.5, color=\clr]
        \draw[<-,thick] (1,1) to[out=down, in=right] (0.53,0.5) to[out=left, in=right] (0.47,0.5);
        \draw[-,thick] (0.49,0.5) to[out=left,in=down] (0,1);
\end{tikzpicture}
}
\newcommand{\swap}{
\begin{tikzpicture}[baseline = 3pt, scale=0.5, color=\clr]
        \draw[->,thick] (0,0) to[out=up, in=down] (1,1);
        \draw[->,thick] (1,0) to[out=up, in=down] (0,1);
\end{tikzpicture}
}
\newcommand{\rswap}{
\begin{tikzpicture}[baseline = 3pt, scale=0.5, color=\clr]
        \draw[->,thick] (0,0) to[out=up, in=down] (1,1);
        \draw[<-,thick] (1,0) to[out=up, in=down] (0,1);
\end{tikzpicture}
}
\newcommand{\lswap}{
\begin{tikzpicture}[baseline = 3pt, scale=0.5, color=\clr]
        \draw[<-,thick] (0,0) to[out=up, in=down] (1,1);
        \draw[->,thick] (1,0) to[out=up, in=down] (0,1);
\end{tikzpicture}
}
\newcommand{\dswap}{
\begin{tikzpicture}[baseline = 3pt, scale=0.5, color=\clr]
        \draw[<-,thick] (0,0) to[out=up, in=down] (1,1);
        \draw[<-,thick] (1,0) to[out=up, in=down] (0,1);
\end{tikzpicture}
}
\newcommand{\xdot}{
\begin{tikzpicture}[baseline = 3pt, scale=0.5, color=\clr]
        \draw[->,thick] (0,0) to[out=up, in=down] (0,1);
        \draw (0,0.4)\bdot;
\end{tikzpicture}
}
\newcommand{\downxdotk}{
\begin{tikzpicture}[baseline = 3pt, scale=0.5, color=\clr]
        \draw[->,thick] (0,0) to[out=up, in=down] (0,1);
        \draw[<-,thick] (-0.5,0) to[out=up, in=down] (-0.5,1);
        \draw (0,0.4)\bdot;
        \draw (0.5,0.4) node{$k$};
\end{tikzpicture}
}
\newcommand{\cldot}{
\begin{tikzpicture}[baseline = 3pt, scale=0.5, color=\clr]
        \draw[->,thick] (0,0) to[out=up, in=down] (0,1);
        \draw (0,0.4)\wdot;
\end{tikzpicture}
}
\newcommand{\downxdot}{
\begin{tikzpicture}[baseline = 3pt, scale=0.5, color=\clr]
        \draw[<-,thick] (0,0) to[out=up, in=down] (0,1);
        \draw (0,0.6)\bdot;
\end{tikzpicture}
}
\newcommand{\spinOB}{{\mathcal{OB}_{\ep}}}
\newcommand{\spinOBC}{{\mathcal{OBC}_{\ep}}}
\newcommand{\spinAOBC}{{\mathcal{AOBC}_{\ep}}}
\newcommand{\spinAOB}{{\mathcal{AOB}_{\ep}}}
\newcommand{\Uksmod}{U(\fq )\operatorname{-smod}}
\newcommand{\EndUksmod}{\END(U(\fq )\operatorname{-smod})}
\newcommand{\sgn}{\operatorname{sgn}}
\newcommand{\cldotdown}{
\begin{tikzpicture}[baseline = 3pt, scale=0.5, color=\clr]
        \draw[<-,thick] (0,0) to[out=up, in=down] (0,1);
        \draw (0,0.6)\wdot;
\end{tikzpicture}}
\newcommand{\clrtwo}{rgb:black,1;blue,1;red,4}
\newcommand{\clrgreen}{rgb:black,2;green,3}
\newcommand{\rdot}{ node[circle, draw, fill=\clrtwo, thick, inner sep=0pt, minimum width=4pt]{}}
\newcommand{\fullreddot}{
    \begin{tikzpicture}[color=\clrtwo]
    \draw (0,0) \rdot;
    \end{tikzpicture}
}
\newcommand{\emptyreddot}{
    \begin{tikzpicture}[color=\clrtwo]
    \draw (0,0) \wdot;
    \end{tikzpicture}
}
\newcommand{\lcapred}{
\begin{tikzpicture}[baseline = 3pt, scale=0.5, color=\clrtwo]
        \draw[-,thick] (1,0) to[out=up, in=right] (0.53,0.5) to[out=left, in=right] (0.47,0.5);
        \draw[->,thick] (0.49,0.5) to[out=left,in=up] (0,0);
\end{tikzpicture}
}
\newcommand{\lcupred}{
\begin{tikzpicture}[baseline = 6pt, scale=0.5, color=\clrtwo]
        \draw[-,thick] (1,1) to[out=down, in=right] (0.53,0.5) to[out=left, in=right] (0.47,0.5);
        \draw[->,thick] (0.49,0.5) to[out=left,in=down] (0,1);
\end{tikzpicture}
}
\newcommand{\rcapred}{
\begin{tikzpicture}[baseline = 3pt, scale=0.5, color=\clrtwo]
        \draw[<-,thick] (1,0) to[out=up, in=right] (0.53,0.5) to[out=left, in=right] (0.47,0.5);
        \draw[-,thick] (0.49,0.5) to[out=left,in=up] (0,0);
\end{tikzpicture}
}
\newcommand{\rcupred}{
\begin{tikzpicture}[baseline = 6pt, scale=0.5, color=\clrtwo]
        \draw[<-,thick] (1,1) to[out=down, in=right] (0.53,0.5) to[out=left, in=right] (0.47,0.5);
        \draw[-,thick] (0.49,0.5) to[out=left,in=down] (0,1);
\end{tikzpicture}
}
\newcommand{\swapred}{
\begin{tikzpicture}[baseline = 3pt, scale=0.5, color=\clrtwo]
        \draw[-,thick] (0,0) to[out=up, in=down] (1,0.85);
        \draw[-,thick] (1,0) to[out=up, in=down] (0,0.85);
        \draw[->,thick] (1,0.8) to[out=up, in=down] (1,1);
        \draw[->,thick] (0,0.8) to[out=up, in=down] (0,1);
\end{tikzpicture}
}
\newcommand{\rswapred}{
\begin{tikzpicture}[baseline = 3pt, scale=0.5, color=\clrtwo]
        \draw[->,thick] (0,0) to[out=up, in=down] (1,1);
        \draw[->,thick] (0,1) to[out=down, in=up] (1,0);
\end{tikzpicture}
}
\newcommand{\lswapred}{
\begin{tikzpicture}[baseline = 3pt, scale=0.5, color=\clrtwo]
        \draw[<-,thick] (0,0) to[out=up, in=down] (1,1);
        \draw[->,thick] (1,0) to[out=up, in=down] (0,1);
\end{tikzpicture}
}
\newcommand{\dswapred}{
\begin{tikzpicture}[baseline = 3pt, scale=0.5, color=\clrtwo]
        \draw[<-,thick] (0,0) to[out=up, in=down] (1,1);
        \draw[<-,thick] (1,0) to[out=up, in=down] (0,1);
\end{tikzpicture}
}
\newcommand{\xdotred}{
\begin{tikzpicture}[baseline = 3pt, scale=0.5, color=\clrtwo]
        \draw[->,thick] (0,0) to[out=up, in=down] (0,1);
        \draw (0,0.4)\rdot;
\end{tikzpicture}
}
\newcommand{\downxdotred}{
\begin{tikzpicture}[baseline = 3pt, scale=0.5, color=\clrtwo]
        \draw[<-,thick] (0,0) to[out=up, in=down] (0,1);
        \draw (0,0.6)\rdot;
\end{tikzpicture}
}
\newcommand{\downcldotred}{
\begin{tikzpicture}[baseline = 3pt, scale=0.5, color=\clrtwo]
        \draw[<-,thick] (0,0) to[out=up, in=down] (0,1);
        \draw (0,0.6)\wdot;
\end{tikzpicture}
}
\newcommand{\cldotred}{
\begin{tikzpicture}[baseline = 3pt, scale=0.5, color=\clrtwo]
        \draw[->,thick] (0,0) to[out=up, in=down] (0,1);
        \draw (0,0.4)\wdot;
\end{tikzpicture}
}
\newcommand{\capgreen}{
\begin{tikzpicture}[baseline = 3pt, scale=0.5, color=\clrgreen]
        \draw[-,thick] (1,0) to[out=up, in=right] (0.53,0.5) to[out=left, in=right] (0.47,0.5);
        \draw[-,thick] (0.49,0.5) to[out=left,in=up] (0,0);
\end{tikzpicture}
}
\newcommand{\cupgreen}{
\begin{tikzpicture}[baseline = 6pt, scale=0.5, color=\clrgreen]
        \draw[-,thick] (1,1) to[out=down, in=right] (0.53,0.5) to[out=left, in=right] (0.47,0.5);
        \draw[-,thick] (0.49,0.5) to[out=left,in=down] (0,1);
\end{tikzpicture}
}
\newcommand{\swapgreen}{
\begin{tikzpicture}[baseline = 3pt, scale=0.5, color=\clrgreen]
        \draw[-,thick] (0,0) to[out=up, in=down] (1,1);
        \draw[-,thick] (1,0) to[out=up, in=down] (0,1);
\end{tikzpicture}
}
\definecolor{darkblue}{HTML}{111199}
\definecolor{darkgreen}{HTML}{336633}
\definecolor{darkred}{HTML}{993333}
\definecolor{darkpurple}{HTML}{995599}
\def\AS{\operatorname{ASerg}}
\def\Serg{\operatorname{Serg}}
\def\Sym{\operatorname{Sym}}
\begin{document}

\title[A basis theorem for $\AOBC$]{A basis theorem for the degenerate affine oriented Brauer-Clifford supercategory}

\author[Brundan]{Jonathan Brundan}
\address{J.B.: Department of Mathematics, University of Oregon}
\thanks{Research of the first author was partially supported by NSF grant DMS-1700905}\
\email{brundan@uoregon.edu}

\author[Comes]{Jonathan Comes}
\address{J.C.: Department of Mathematics \& Physical Sciences, The College of Idaho}
\email{jonnycomes@gmail.com}

\author[Kujawa]{Jonathan R. Kujawa}
\address{J.K.: Department of Mathematics, University of Oklahoma}
\thanks{Research of the third author was partially supported by NSA grant H98230-11-1-0127}\
\email{kujawa@math.ou.edu}

\date{\today}

\thanks{2010 {\it Mathematics Subject Classification}: 17B10, 18D10.}

\begin{abstract}
We introduce the oriented Brauer-Clifford and degenerate affine oriented Brauer-Clifford supercategories.  These are diagrammatically defined monoidal supercategories which provide combinatorial models for certain natural monoidal supercategories of supermodules and endosuperfunctors, respectively, for the Lie superalgebras of type Q. Our main results are basis theorems for these diagram supercategories.  We also discuss connections and applications to the representation theory of the Lie superalgebra of type Q.
\end{abstract}

\maketitle


\section{Introduction} 

\subsection{Overview}  

Let $\k$ denote a fixed ground field\footnote{More generally, one could take $\k$ to be a commutative ring.  See \cref{SS:PositiveCharacteristic}.} of characteristic not two.  In this paper we study certain monoidal supercategories; that is, categories in which morphisms form $\Z_{2}$-graded $\k$-vector spaces, the category has a tensor product, and compositions and tensor products of morphisms are related by a graded version of the interchange law (see \cref{section: monoidal supercategories} for more details). While enriched monoidal categories have been the object of study for some time, it is only recently that monoidal supercategories have taken on a newfound importance thanks to the role they play in higher representation theory.  To name a few examples, they appear explicitly or implicitly in the categorification of Heisenberg algebras \cite{RS}, ``odd'' categorifications of Kac-Moody (super)algebras (e.g.\ \cite{EL, KKO, KKO2}), the definition of super Kac-Moody categories \cite{BE2}, and in various Schur-Weyl dualities in the $\Z_{2}$-graded setting (e.g.\ \cite{KT}).

In this paper we introduce two monoidal supercategories.  They are the \emph{oriented Brauer-Clifford supercategory} $\OBC$ and the \emph{degenerate affine oriented Brauer-Clifford supercategory} $\AOBC$.  They are defined by generators and relations. For both monoidal supercategories the generating objects are $\up$ and $\down$. Hence, objects in both $\OBC$ and $\AOBC$ can be viewed as finite words in $\up$ and $\down$ (we write $\unit$ for the unit object). 
For $\OBC$ the generating morphisms are the three even morphisms $\lcup:\unit\to\up\down$, $\lcap: \down\up\to\unit$, $\swap: \up\up\to\up\up$, and one odd morphism $\cldot:\up\to\up$.  For $\AOBC$ the generating morphisms are the those of $\OBC$ along with an even morphism $\xdot:\up\to\up$. These generating morphisms are subject to an explicit list of local relations (see \cref{OBC defn,AOBC defn} for details). In \cref{section: AOBC} we explain how more complicated diagrams can be interpreted as morphisms in $\OBC$ and $\AOBC$. For example, here are two diagrams which correspond to morphisms in $\Hom_\AOBC(\down^2\up^2,\down^2\up^3\down)$:
\begin{equation}\label{two AOBC diagrams}
    \begin{tikzpicture}[baseline = 25pt, scale=0.35, color=\clr]
        \draw[<-,thick] (2,0) to[out=up,in=down] (0,5);
        \draw[->,thick] (6,0) to[out=up,in=down] (6,5);
        \draw[<-,thick] (0,0) to[out=up,in=left] (2,1.5) to[out=right,in=up] (4,0);
        \draw[->,thick] (2,5) to[out=down,in=left] (3,4) to[out=right,in=down] (4,5);
        \draw[->,thick] (2,3.1) to (2,3) to[out=down,in=right] (0.5,2) 
                        to[out=left,in=down] (-1,3)
                        to[out=up,in=left] (0.5,4) to[out=right,in=up] (2,3);
        \draw[->,thick] (10,5) to[out=down,in=up] (8,3) to[out=down,in=down] (10,3) 
                        to[out=up,in=down] (8,5);
        \draw (1.9,0.8) \bdot;
        \draw (3.7,0.8) \wdot;
        \draw (6,0.8) \bdot;
        \draw (6,1.53) \wdot;
        \draw (6,2.26) \bdot;
        \draw (-1,3) \bdot;
        \draw (0.65,3) \wdot;
        \draw (6,3) \wdot;
        \draw (0.05,4.5) \bdot;
        \draw (2.22,4.35) \wdot;
        \draw (3.78,4.35) \bdot;
        \draw (6,4.35) \bdot;
    \end{tikzpicture}
    \qquad\qquad\qquad
    \begin{tikzpicture}[baseline = 25pt, scale=0.35, color=\clr]
        \draw[<-,thick] (2,0) to[out=up,in=down] (0,5);
        \draw[->,thick] (6,0) to[out=up,in=down] (6,5);
        \draw[<-,thick] (0,0) to[out=up,in=left] (2,1.5) to[out=right,in=up] (4,0);
        \draw[->,thick] (2,5) to[out=down,in=left] (3,4) to[out=right,in=down] (4,5);
        \draw[->,thick] (9.1,3) to (9,3) to[out=left,in=up] (8,2) to[out=down,in=left] (9,1)
                        to[out=right,in=down] (10,2) to[out=up,in=right] (9,3);
        \draw[->,thick] (10,5) to[out=down,in=right] (9,4) to[out=left,in=down] (8,5);
        \draw (0.05,4.5) \bdot;
        \draw (0.65,3) \bdot;
        \draw (2.22,4.35) \bdot;
        \draw (3.78,4.35) \wdot;
        \draw (10,2) \bdot;
        \draw (6,5/4) \bdot;
        \draw (6,10/4) \bdot;
        \draw (6,15/4) \bdot;
        \draw (0.3,0.8) \wdot;
        \draw (1.9,0.8) \wdot;
    \end{tikzpicture}
\end{equation}
In particular, it will be obvious that the Hom-spaces in $\OBC$ and $\AOBC$ are spanned by the set of all such diagrams they contain.  As is usually the case, the difficulty is in identifying a subset of these diagrams which form a basis. The main results of this paper are contained in \cref{OBC basis theorem,AOBC basis theorem} in which we provide a diagrammatic basis for the morphism spaces of these supercategories, and \cref{Cyclotomic basis conjecture} in which we provide bases for the cyclotomic quotients of $\AOBC$.

\subsection{Motivation and applications}  

Let us describe the motivation for these supercategories and some consequences of the aforementioned basis theorems.  Let 
\[
\fq = \fq (n) = \left\{ \left(\begin{matrix} A & B \\
                                    B & A
\end{matrix} \right) \mathrel{\Big|} A, B \text{ are $n \times n$ matrices with entries in $\k$} \right\}.
\]  Put a $\Z_{2}$-grading on $\fq = \fq_{\0}\oplus \fq_{\1}$ by setting $\fq_{\0}$ (resp.\ $\fq_{\1}$) to be the subspace of matrices with $B=0$ (resp.\ $A=0$).  Then $\fq$ is the Lie superalgebra of type $Q$, where the Lie bracket given by the graded version of the commutator bracket. See \cref{SS:Liesuperalgebras} for details. 

The representations in type $Q$ do not have a classical analogue. Despite the important early work done by Penkov-Serganova and others to obtain character formulas and other information (see \cite{PS,BruQ} and references therein), the representation theory in type Q remain mysterious.  For example, only very recently the structure of category $\mathcal{O}$ for $\fq$ became clear thanks to the work of Chen \cite{Chen}, Cheng-Kwon-Wang \cite{CKW}, and Brundan-Davidson \cite{Brundan-Davidson,BD2}.

Since the enveloping superalgebra of $\fq$, $U(\fq )$, is a Hopf superalgebra, one can consider the tensor product of $\fq$-supermodules and the duals of finite-dimensional $\fq$-supermodules. Let $V$ denote the natural supermodule for $\fq$; that is, column vectors of height $2n$ with the action of $\fq$ given by matrix multiplication. Using the Hopf structure we can then form tensor products of $V$ and its dual, $V^{*}$. For brevity, let us write $V^{\up}=V$ and $V^{\down}=V^{*}$ and, more generally, given a word $\ob{a}$ in $\up$ and $\down$, let $V^{\ob{a}}$ denote the tensor product of the corresponding supermodules (e.g.\ $V^{\up\up\down}= V\otimes V \otimes V^{*}$).   The full subsupercategory of all $\fq$-supermodules obtained in this way is a natural object of study.   

Moreover, the translation superfunctors given by tensoring with $V$'s and $V^{*}$'s are a key tool in much of the progress made in the study of type Q representations.  That is, an important role is played by the full subsupercategory of endosuperfunctors of the form $V^{\ob{a}} \otimes -$ as $\ob{a}$ ranges over all finite words in $\up$ and $\down$.  Given the importance of these endosuperfunctors, it is of interest to understand this supercategory.

By design $\OBC$ and $\AOBC$, respectively, are combinatorial models for these two supercategories.  Specifically, there is a monoidal superfunctor 
\[
\Phi : \OBC \to \fq\text{-supermodules}
\] given on objects by $\Phi(\ob{a}) = V^{\ob{a}}$.  When $\k$ is characteristic zero, this superfunctor is full (see \cref{T:MixedSchurWeylDuality}).  That is, there is a surjective superalgebra homomorphism 
\begin{equation}\label{E:isomtheorem}
\End_{\OBC}\left(\ob{a} \right) \to \End_{\fq(n)}\left(V^{\ob{a}} \right)
\end{equation}
which, moreover, is an isomorphism whenever the length of $\ob{a}$ is less than or equal to $n$ (see \cref{R:JungKangResult}).  

It follows from our basis theorem that $\End_{\OBC}\left(\up^r\right)$ is isomorphic to the (finite) Sergeev superalgebra introduced in \cite{Ser} (see \cref{C:OBCandSergeev}).  For arbitrary $\ob a$,  $\End_{\OBC}(\ob{a})$ is isomorphic to the walled Brauer-Clifford superalgebra introduced by Jung-Kang \cite{JK} (see \cref{C:EndOBCisomorphictoWalledBrauerCliffordAlgebra}). The fact that \cref{E:isomtheorem} is an isomorphism whenever the length of $\ob{a}$ is less than or equal to $n$ recovers \cite[Theorem 3.5]{JK}.  We should point out the definitions given in \cite{JK} are global in nature. For example, it is not a priori clear their intricate rule defines an associative product.  In contrast, our diagrammatic description for these superalgebras involves only local relations and leads to significant simplifications.

Analogously, there is a monoidal superfunctor from $\AOBC$ to the supercategory of endosuperfunctors of $\fq$-supermodules, $\Psi: \AOBC \to \EndUksmod$, given on objects by $\Psi(\ob{a}) = V^{\ob{a}} \otimes -$. When $\k$ is characteristic zero this superfunctor is faithful ``asymptotically'' in the sense that given any nonzero morphism in $\AOBC$, its image under $\Psi$ is nonzero as long as $n$ is sufficiently large.  Indeed, this observation is key to proving the basis theorems.  We reduce to showing that the induced map 
\[
\End_{\AOBC}(\up^r) \to \End_{\EndUksmod}(V^{\otimes r} \otimes -)
\]
is injective for $n$ sufficiently large (e.g.\  $n \geq r$).  This in turn is proven by introducing a certain $\fq (n)$-supermodule $M$ (which we call the generic Verma supermodule) and proving that the induced map of superalgebras
\[
\End_{\AOBC}(\up^r) \to \End_{\fq(n)}(V^{\otimes r} \otimes M)
\]
is injective.  

As an application, in \cref{SS:bubblesandcenter} we use $\Psi$ to compute a family of central elements in $U(\fq)$.  By the basis theorem $\End_{\AOBC}(\unit)$ is known to be a polynomial ring in $\Delta_1,\Delta_3,\Delta_5,\ldots$ where $\Delta_{k}$ is defined by \cref{Delta sub k}. In \cref{SS:bubblesandcenter} we explicitly compute the central element corresponding to $\Psi(\Delta_{k})$ for each $k$ and show that they are essentially the central elements first introduced by Sergeev \cite{SergeevCenter} after the application of the antipode of $U(\fq)$ (see \cref{P:SergeevCenter}).

\subsection{Cyclotomic quotients}\label{SS:Cyclotomic quotients}

Fix nonnegative integers $a,b$ and $m_i\in\k$ for each $1\leq i\leq a$.  Using this data fix the polynomial of degree $\ell:=2a+b$ given by $f(t)=t^b\prod_{1\leq i\leq a}(t^2-m_i)$.   The \emph{cyclotomic quotient} $\OBC^f$ is the supercategory defined as the quotient of $\AOBC$ by the left tensor ideal generated by $f(\xdot)$. Note that $\OBC^f$  does not obviously inherit the structure of a monoidal supercategory from $\AOBC$, but it is a left module supercategory over $\AOBC$. In \cref{S:CyclotomicQuotients} we describe a basis for the Hom-spaces of $\OBC^f$ (see \cref{Cyclotomic basis conjecture}).  That this forms a basis was conjectured in a preprint version of this article (written by the second two authors). The first author provided a proof soon after the preprint appeared on the ArXiv; see \cref{S:cyclobasis}. Subsequently, Gao-Rui-Son-Su posted an independent proof of \cref{Cyclotomic basis conjecture} in \cite{GRSS2}; their proof is in the spirit of our proof of \cref{AOBC basis theorem}.

\subsection{Connection to the superalgebras of Gao-Rui-Song-Su}\label{SS:GRSSalgebras}  Shortly after the authors released the first version of this paper, Gao-Rui-Song-Su posted a preprint to the ArXiv in which they describe their independent work on affine walled Brauer-Clifford superalgebras and their cyclotomic quotients \cite{GRSS}.  In the same spirit as this paper, they define these superalgebras by generators and relations and provide basis theorems.  The key difference is that we choose to work with the \emph{supercategories}, whereas they work with \emph{superalgebras}.  In \cref{SS:connection to GRSS} we explain the connection between these two approaches and show their superalgebras are specializations of endomorphism superalgebras in our supercategories. To do so, we construct explicit superalgebra maps from the Gao-Rui-Song-Su superalgebras to the endomorphism superalgebras of our supercategories. Using our bases theorems, one can check that the images under those superalgebra maps of the so-called regular monomials in the superalgebras of Gao-Rui-Song-Su are bases of the appropriate endomorphism superalgebras. Whence, our basis theorems imply the linear independence of the corresponding basis theorems of Gao-Rui-Song-Su (\cite[Theorem 5.15 and Theorem 6.10]{GRSS}). 
These arguments cannot merely be reversed in order to obtain our basis theorems from those in \cite{GRSS}. The main obstacle comes from the fact that our endomorphism superalgebras do not come equipped with nice descriptions via generators and relations. Indeed, without already having a basis theorem for the supercategory, it is difficult to extract a full system of generators and relations for the endomorphism superalgebras from the defining generators and relations for the monoidal supercategory. Hence, providing a well-defined superalgebra map from our endomorphism superalgebras to the superalgebras of Gao-Rui-Song-Su is not an easy task. In particular, we are unable to conclude \cref{T:integralAOBC} from \cite[Theorem 5.14]{GRSS} nor \cref{Cyclotomic basis conjecture} from \cite[Theorem 6.10]{GRSS}.

\subsection{Future directions}\label{SS:futuredirections}

There are a number of interesting questions yet to be considered.  For example, in his PhD thesis, Reynolds showed that the locally finite-dimensional representations of a certain specialization of the oriented Brauer category provide a categorification of the tensor product of a highest weight representation and lowest weight representation for a Kac-Moody Lie algebra of type A \cite{ARthesis}.  See \cite{BSkein} for the quantum version of this story.  We expect similar results to hold for $\OBC$ where the categorification is of a representation for a Kac-Moody algebra of type B.  Another natural question is to extend the results of this paper from the classical to the quantum setting.  There is a quantized enveloping superalgebra, $U_{q}(\fq )$, which is a Hopf superalgebra and one can ask for quantum analogues of $\OBC$ and $\AOBC$.  The quantum walled Brauer-Clifford superalgebras were already introduced in \cite{BGHKW}.  In a third direction, there should be representations of $\AOBC$ and its cyclotomic quotients related to the representations of finite $W$-superalgebras of type Q and to an expected higher level mixed Schur-Weyl-Sergeev duality (see \cite[Section 4.6]{BCNR} where this is explained for type A).  So far as the authors are aware, this theory has yet to be developed. 

\subsection{Acknowledgements}\label{SS:acknowledgements}  The authors are pleased to thank Nicholas Davidson and Catharina Stroppel for helpful conversations. Part of this project was completed while the second author enjoyed a visit to the Max Planck Institute in Bonn. He would like to thank the institute for providing an excellent working environment. 

\section{Monoidal supercategories}\label{section: monoidal supercategories}
In this section we give a brief introduction to monoidal supercategories following \cite[\S1]{BE}. We refer the reader to \emph{loc.~cit.}~for more details and further references. 

\subsection{Superspaces}\label{SS:superspaces} Let $\k$ be a fixed ground field of characteristic not two.  A \emph{superspace} $V = V_{\0} \oplus V_{\1}$ is a $\Z_{2}$-graded $\k$-vector space.  As we will also have $\Z$-gradings we reserve the word degree for later use and instead refer to the \emph{parity} of an element.  That is, elements of $V_{\0}$ (resp.\ $V_{\1}$) are said to have parity $\0$ or to be \emph{even} (resp.\ parity $\1$ or \emph{odd}). Given a homogeneous element $v \in V$ we write $\p{v} \in \Z_{2}$ for the parity of the element. Given two superspaces $V$ and $W$, the set of all linear maps $\Hom_{\k}(V,W)$ is naturally $\Z_{2}$-graded by declaring that $f:V \to W$ has parity $\ep \in \Z_{2}$ if $f(V_{\ep'}) \subseteq V_{\ep+\ep'}$ for all $\ep'\in  \Z_{2}$. 
Let $\svec$ and $\usvec$ denote the categories of all superspaces with $\Hom_\svec(V,W)=\Hom_\k(V,W)$ and $\Hom_\usvec(V,W)=\Hom_\k(V,W)_{\bar{0}}$.

Given superspaces $V$ and $W$, the tensor product $V \otimes W$ as vector spaces is also naturally a superspace with $\Z_{2}$-grading given by declaring $\p{v\otimes w}= \p{v}+\p{w}$ for all homogeneous $v \in V$ and $w \in W$.  
The tensor product of linear maps between superspaces is defined via $(f\otimes g)(v\otimes w)=(-1)^{\p{g}\p{v}}f(v)\otimes g(w)$. This gives $\usvec$ (but \emph{not} $\svec$) the structure of a monoidal category with $\unit=\k$ (viewed as superspace concentrated in even parity). The graded flip map $v\otimes w\mapsto(-1)^{\p{v}\p{w}}w\otimes v$ gives $\usvec$ the structure of a symmetric monoidal category. Here and elsewhere we write the formula only for homogeneous elements with the general case given by extending linearly.

\subsection{Monoidal supercategories}\label{SS: monoidal supercats}
By a \emph{supercategory} we mean a category enriched in $\usvec$. Similarly, a \emph{superfunctor} is a functor enriched in $\usvec$. Given two superfunctors $F,G:\cat{A}\to\cat{B}$, a \emph{supernatural transformation} $\eta:F\to G$ consists of $\eta_{\ob a,\ep}\in\Hom_\cat{B}(F\ob a,G\ob a)_\ep$ for each object $\ob a\in \cat{A}$ and $\ep\in\Z_2$ such that $\eta_{\ob b,\ep}\circ F f=(-1)^{\ep\p{f}}G f\circ\eta_{\ob a,\ep}$ for every $f\in\Hom_\cat{A}(\ob a,\ob b)$. We will write $\eta_{\ob a}=\eta_{\ob a,\0}+\eta_{\ob a,\1}\in\Hom_\cat{B}(F\ob a,G\ob a)$. 
The space of all supernatural transformations from $F$ to $G$ is given the structure of a superspace by declaring a supernatural transformation $\eta$ to be \emph{even} (resp.~\emph{odd}) if $\eta_{\ob a,\bar{1}}=0$ (resp.~$\eta_{\ob a,\bar{0}}=0$) for all objects $\ob a$.

Given two supercategories $\cat{A}$ and $\cat{B}$, there is a supercategory $\cat{A}\boxtimes\cat{B}$ whose objects are pairs $(\ob a,\ob b)$ of objects $\ob a\in\cat{A}$ and $\ob b\in\cat{B}$ and whose morphisms are given by the tensor product of superspaces $\Hom_{\cat{A}\boxtimes\cat{B}}((\ob a,\ob b),(\ob a',\ob b'))=\Hom_\cat{A}(\ob a,\ob a')\otimes\Hom_\cat{B}(\ob b,\ob b')$ with composition defined using the symmetric braiding on $\usvec$: $(f\otimes g)\circ(h\otimes k)=(-1)^{\p{g}\p{h}}(f\circ h)\otimes(g\circ k)$. This can be used to give the category $\scat$ of all supercategories the structure of a monoidal category. 

By a \emph{monoidal supercategory} we mean a supercategory $\cat{A}$ equipped with a superfunctor $-\otimes-:\cat{A}\boxtimes\cat{A}\to\cat{A}$, a unit object $\unit$, and even supernatural isomorphisms $(-\otimes-)\otimes-\arup{\sim}-\otimes(-\otimes-)$ and $\unit\otimes-\arup{\sim}-\larup{\sim}-\otimes\unit$ called \emph{coherence maps} satisfying certain axioms analogous to the ones for a monoidal category. A monoidal supercategory is called \emph{strict} if its coherence maps are identities.  
A \emph{monoidal superfunctor} between two monoidal supercategories $\cat{A}$ and $\cat{B}$ is a superfunctor $F:\cat{A}\to\cat{B}$ equipped with an even supernatural isomorphism $(F-)\otimes (F-)\arup{\sim}F(-\otimes-)$ and an even isomorphism $\unit_\cat{B}\arup{\sim}F\unit_\cat{A}$ satisfying axioms analogous to the ones for a monoidal functor. 

A \emph{braided monoidal supercategory} is a monoidal supercategory $\cat{A}$ equipped with a $\usvec$-enriched version of a braiding. More precisely, let $T:\cat{A}\boxtimes\cat{A}\to\cat{A}$ denote the superfunctor defined on objects by $(\ob a,\ob b)\mapsto \ob b\otimes\ob a$ and on morphisms by $f\otimes g\mapsto (-1)^{\p{f}\p{g}}g\otimes f$. A braiding on a $\cat{A}$ is a supernatural isomorphism $\gamma:-\otimes-\to T$ satisfying the usual hexagon axioms. A \emph{symmetric monoidal supercategory} is a braided monoidal supercategory $\cat{A}$ with $\gamma_{\ob a,\ob b}^{-1}=\gamma_{\ob b,\ob a}$ for all objects $\ob a,\ob b\in\cat{A}$.

Given a monoidal supercategory $\cat{A}$ and an object $\ob a\in\cat{A}$, by a \emph{(left) dual} to $\ob a$ we mean an object $\ob a^*$ equipped with homogeneous \emph{evaluation} and \emph{coevaluation} morphisms $\ev_{\ob{a}}: \ob a^*\otimes \ob a\to\unit$ and $\coev_{\ob{a}}: \unit\to\ob a\otimes\ob a^*$, respectively, in which $\ev_{\ob{a}}$ and $\coev_{\ob{a}}$ have the same parity and satisfy the super version of the usual adjunction axioms. For example, given a finite-dimensional superspace $V$ with homogeneous basis $\{v_i \mid i\in I\}$, then $V^*=\Hom_\k(V,\k)$ with evaluation and coevaluation given by $f\otimes v\mapsto f(v)$ and $1\mapsto\sum_{i\in I}v_i\otimes v^*_i$ respectively, where $v^*_i\in V^*$ is defined by $v_i^*(v_j)=\delta_{i,j}$.  A monoidal supercategory in which every object has a (left) dual is called \emph{(left) rigid}.

The following examples will be relevant for what follows.

\begin{itemize} 
   \item [(i)]  Any $\k$-linear monoidal category can be viewed as a monoidal supercategory in which all $\Hom$-spaces are concentrated in parity $\0$.  If the category is braided, symmetric braided, or rigid, then it still is as a supercategory.

    \item[(ii)] The tensor product and braiding defined in \cref{SS:superspaces} give $\svec$ the structure of a symmetric monoidal supercategory with $\unit=\k$ (viewed as a superspace concentrated in parity $\0$).  The symmetric braiding  $\gamma_{V,W}:V \otimes W \to W \otimes V$ is given by the graded flip map.  The full subsupercategory of finite-dimensional superspaces is rigid.

    \item[(iii)] Given a Lie superalgebra $\fg = \fg_{\0}\oplus \fg_{\1 }$ over a field $\k $ of characteristic not two, let $\gsmod$ denote the supercategory of all $\fg$-supermodules.  That is, superspaces $M = M_{\0}\oplus M_{\1}$ with an action by $\fg$ which respects the grading in the sense that $\fg_{\ep}.M_{\ep'} \subseteq M_{\ep+\ep'}$.  The tensor product $M \otimes M'$ has action given by $x.(m\otimes m') = (x.m) \otimes m' + (-1)^{\p{x}\p{m}}m \otimes (x.m')$ for all homogeneous $x \in \fg$, $m\in M$, and $m'\in M'$ and the graded flip map provides a symmetric braiding.  The unit object $\unit$ is the ground field $\k$ with trivial $\fg$-action.  In this way $\gsmod$ is a symmetric monoidal supercategory. The full subsupercategory of finite-dimensional $\fg$-supermodules is rigid with the action given on $M^{*}$ by $(x.f)(m) = -(-1)^{\p{x}\p{f}}f(x.m)$.

    \item[(iv)] Given a supercategory $\cat{A}$ let $\END(\cat{A})$ denote the supercategory whose objects are all superfunctors $\cat{A}\to\cat{A}$ with supernatural transformations as morphisms. We give $\END(\cat{A})$ the structure of a monoidal supercategory with $\unit=\Id$ as follows. The tensor product of two superfunctors $F,G:\cat{A}\to\cat{A}$ is defined by composition $F\otimes G=F\circ G$. Given supernatural transformations $\eta:F\to G$ and $\theta:H\to K$ we define $\eta\otimes \theta:F\circ H\to G\circ K$ by setting $(\eta\otimes \theta)_{\ob a,\ep}=\sum\limits_{\ep_1+\ep_2=\ep}\eta_{K\ob a,\ep_1} \circ F\theta_{\ob a,\ep_2}$ for each object $\ob a\in\cat{A}$ and $\ep\in\Z_2$. The coherence maps are the obvious ones.  
\end{itemize}

When working with monoidal supercategories it will sometimes be convenient to use the following notation. Given objects $\ob a$ and $\ob b$ in a monoidal supercategory, we write $\ob a\ob b:=\ob a\otimes\ob b$. We will also write ${\ob a}^r:=\underbrace{\ob a\otimes\cdots\otimes\ob a}_{r\text{ times}}$.

\subsection{String calculus}\label{string calculus}
There is a well-defined string calculus for strict monoidal supercategories discussed in \cite[\S1.2]{BE}. A morphism $f:\ob a\to \ob b$ is drawn as
\begin{equation*}
    \begin{tikzpicture}[baseline = 12pt,scale=0.5,color=\clr,inner sep=0pt, minimum width=11pt]
        \draw[-,thick] (0,0) to (0,2);
        \draw (0,1) node[circle,draw,thick,fill=white]{$f$};
        \draw (0,-0.2) node{$\ob a$};
        \draw (0, 2.3) node{$\ob b$};
    \end{tikzpicture}
    \qquad\text{or simply as}\qquad
    \begin{tikzpicture}[baseline = 12pt,scale=0.5,color=\clr,inner sep=0pt, minimum width=11pt]
        \draw[-,thick] (0,0) to (0,2);
        \draw (0,1) node[circle,draw,thick,fill=white]{$f$};
    \end{tikzpicture}
\end{equation*}
when the objects are left implicit.  Notice that the convention used in this paper is to read diagrams from bottom to top. 
The products of morphisms $f\otimes g$ and $f\circ g$ are given by horizontal and vertical stacking respectively:
\begin{equation*}
    \begin{tikzpicture}[baseline = 19pt,scale=0.5,color=\clr,inner sep=0pt, minimum width=11pt]
        \draw[-,thick] (0,0) to (0,3);
        \draw[-,thick] (2,0) to (2,3);
        \draw (1,1.5) node[color=black]{$\otimes$};
        \draw (0,1.5) node[circle,draw,thick,fill=white]{$f$};
        \draw (2,1.5) node[circle,draw,thick,fill=white]{$g$};
    \end{tikzpicture}
    ~=~
    \begin{tikzpicture}[baseline = 19pt,scale=0.5,color=\clr,inner sep=0pt, minimum width=11pt]
        \draw[-,thick] (0,0) to (0,3);
        \draw[-,thick] (2,0) to (2,3);
        \draw (0,1.5) node[circle,draw,thick,fill=white]{$f$};
        \draw (2,1.5) node[circle,draw,thick,fill=white]{$g$};
    \end{tikzpicture}
    ~,\qquad
    \begin{tikzpicture}[baseline = 19pt,scale=0.5,color=\clr,inner sep=0pt, minimum width=11pt]
        \draw[-,thick] (0,0) to (0,3);
        \draw[-,thick] (2,0) to (2,3);
        \draw (1,1.5) node[color=black]{$\circ$};
        \draw (0,1.5) node[circle,draw,thick,fill=white]{$f$};
        \draw (2,1.5) node[circle,draw,thick,fill=white]{$g$};
    \end{tikzpicture}
    ~=~
    \begin{tikzpicture}[baseline = 19pt,scale=0.5,color=\clr,inner sep=0pt, minimum width=11pt]
        \draw[-,thick] (0,0) to (0,3);
        \draw (0,2.2) node[circle,draw,thick,fill=white]{$f$};
        \draw (0,0.8) node[circle,draw,thick,fill=white]{$g$};
    \end{tikzpicture}
    ~.
\end{equation*}
Pictures involving multiple products should be interpreted by \emph{first composing horizontally, then composing vertically}. For example, 
\begin{equation*}
    \begin{tikzpicture}[baseline = 19pt,scale=0.5,color=\clr,inner sep=0pt, minimum width=11pt]
        \draw[-,thick] (0,0) to (0,3);
        \draw[-,thick] (2,0) to (2,3);
        \draw (0,2.2) node[circle,draw,thick,fill=white]{$f$};
        \draw (2,2.2) node[circle,draw,thick,fill=white]{$g$};
        \draw (0,0.8) node[circle,draw,thick,fill=white]{$h$};
        \draw (2,0.8) node[circle,draw,thick,fill=white]{$k$};
    \end{tikzpicture}
\end{equation*}
should be interpreted as $(f\otimes g)\circ(h\otimes k)$. In general, this is \emph{not} the same as $(f\circ h)\otimes(g\circ k)$ because of the \emph{super-interchange law}:
\[
    (f\otimes g)\circ(h\otimes k)=(-1)^{\p{g}\p{h}}(f\circ h)\otimes(g\circ k).
\]
In terms of string calculus, the super-interchange law implies 
\begin{equation}\label{super-interchange}
    \begin{tikzpicture}[baseline = 19pt,scale=0.5,color=\clr,inner sep=0pt, minimum width=11pt]
        \draw[-,thick] (0,0) to (0,3);
        \draw[-,thick] (2,0) to (2,3);
        \draw (0,2) node[circle,draw,thick,fill=white]{$f$};
        \draw (2,1) node[circle,draw,thick,fill=white]{$g$};
    \end{tikzpicture}
    ~=~
    \begin{tikzpicture}[baseline = 19pt,scale=0.5,color=\clr,inner sep=0pt, minimum width=11pt]
        \draw[-,thick] (0,0) to (0,3);
        \draw[-,thick] (2,0) to (2,3);
        \draw (0,1.5) node[circle,draw,thick,fill=white]{$f$};
        \draw (2,1.5) node[circle,draw,thick,fill=white]{$g$};
    \end{tikzpicture}
    ~=(-1)^{\p{f}\p{g}}~
    \begin{tikzpicture}[baseline = 19pt,scale=0.5,color=\clr,inner sep=0pt, minimum width=11pt]
        \draw[-,thick] (0,0) to (0,3);
        \draw[-,thick] (2,0) to (2,3);
        \draw (0,1) node[circle,draw,thick,fill=white]{$f$};
        \draw (2,2) node[circle,draw,thick,fill=white]{$g$};
    \end{tikzpicture}
    ~.
\end{equation}

\subsection{Graded and filtered superspaces}\label{subsection: graded and filtered superspaces}

By a \emph{graded superspace} we mean a $\k$-vector space $V$ equipped with a decomposition $V=\bigoplus_{(i,\ep)\in \Z\times\Z_2}V_{i,\ep}$. We write $V_i=V_{i,\bar{0}}\oplus V_{i,\bar{1}}$ for the elements of $V$ that are \emph{homogeneous of degree $i$}. Given two graded superspaces $V$ and $W$, we write $\Hom_\k(V,W)_{i,\bar{0}}$ (resp.~$\Hom_\k(V,W)_{i,\bar{1}}$) for the space of all $\k$-linear maps that map $V_{j,\ep}$ to $W_{j+i,\ep}$ (resp.~$W_{j+i,\ep+\bar{1}}$) for each $(j,\ep)\in\Z\times\Z_2$. We let $\gsvec$ and $\ugsvec$ denote the supercategories of all graded superspaces with 
\begin{align*}
    \Hom_\gsvec(V,W) &=\bigoplus_{(i,\ep)\in\Z\times\Z_2}\Hom_\k(V,W)_{i,\ep}\\
    \text{and } \Hom_\ugsvec(V,W) &=\Hom_\k(V,W)_{0,\bar{0}}.
\end{align*}
There is a natural way to give $\gsvec$ (resp.~$\ugsvec$) the structure of a monoidal supercategory (resp.~monoidal category) with 
\[(V\otimes W)_{i,\ep}=\bigoplus_{(j,\eta)\in\Z\times\Z_2}V_{j,\eta}\otimes W_{i-j,\ep+\eta}.\]

By a \emph{filtered superspace} we mean a superspace $V=V_{\bar{0}}\oplus V_{\bar{1}}$ equipped with a filtration $\cdots\subseteq V_{\leq i,\ep}\subseteq V_{\leq i+1,\ep}\subseteq\cdots$ such that $\bigcap_{(i,\ep)\in\Z\times\Z_2}V_{\leq i,\ep}=0$ and $\bigcup_{(i,\ep)\in\Z\times\Z_2}V_{\leq i,\ep}=V_\ep$ for each $\ep\in\Z_2$. We write $V_{\leq i}=V_{\leq i,\bar{0}}\oplus V_{\leq i,\bar{1}}$ for the elements of $V$ that are \emph{filtered degree $i$}. Given two filtered superspaces $V$ and $W$, we write $\Hom_\k(V,W)_{\leq i,\bar{0}}$ (resp.~$\Hom_\k(V,W)_{\leq i,\bar{1}}$) for the space of all $\k$-linear maps that map $V_{\leq j,\ep}$ to $W_{\leq j+i,\ep}$ (resp.~$W_{\leq j+i,\ep+\bar{1}}$) for each $(j,\ep)\in\Z\times\Z_2$. We let $\fsvec$ and $\ufsvec$ denote the supercategories of all filtered superspaces with 
\begin{align*}
    \Hom_\fsvec(V,W) &=\bigoplus_{(i,\ep)\in\Z\times\Z_2}\Hom_\k(V,W)_{\leq i,\ep}\\
    \text{and } \Hom_\ufsvec(V,W) &=\Hom_\k(V,W)_{\leq 0,\bar{0}}.
\end{align*}
There is a natural way to give $\fsvec$ (resp.~$\ufsvec$) the structure of a monoidal supercategory (resp.~monoidal category) with 
\[(V\otimes W)_{\leq i,\ep}=\bigoplus_{(j,\eta)\in\Z\times\Z_2}V_{\leq j,\eta}\otimes W_{\leq i-j,\ep+\eta}.\]
Every graded superspace $V$ can be viewed as a filtered superspace by setting $V_{\leq i,\ep}=\bigoplus_{j\leq i}V_{j,\ep}$ for all $(i,\ep)\in\Z\times\Z_2$. On the other hand, given a filtered superspace $V$, we write $\gr V$ for the \emph{associated graded superspace} with $(\gr V)_{i,\ep}:=V_{\leq i,\ep}/V_{\leq i-1,\ep}$ for each $(i,\ep)\in\Z\times\Z_2$. Given filtered superspaces $V$ and $W$, a map $f\in\Hom_\k(V,W)_{\leq i,\ep}$ induces a map $\gr_{i,\ep}f\in\Hom_\k(\gr V,\gr W)_{i,\ep}$ in an obvious way.

\subsection{Graded and filtered supercategories}\label{subsection: graded and filtered super-cats}
By a \emph{graded} (resp.~\emph{filtered}) \emph{supercategory} we mean a category enriched in $\ugsvec$ (resp.~$\ufsvec$). Similarly, a \emph{graded} (resp.~\emph{filtered}) \emph{superfunctor} is a functor enriched in $\ugsvec$ (resp.~$\ufsvec$). By a \emph{graded monoidal supercategory} we mean a monoidal supercategory that is graded in such a way that $f\otimes g$ is homogeneous of degree $i+j$ whenever $f$ and $g$ are homogeneous of degree $i$ and $j$ respectively. Similarly, a \emph{filtered monoidal supercategory} is a monoidal supercategory that is filtered in such a way that $f\otimes g$ has filtered degree $i+j$ whenever $f$ and $g$ have filtered degree $i$ and $j$ respectively. 
 
Given a filtered supercategory $\cat{A}$, the \emph{associated graded supercategory} $\gr\cat{A}$ is the supercategory with the same objects as $\cat{A}$ and with $\Hom_{\gr\cat{A}}(\ob a,\ob b):=\gr\Hom_\cat{A}(\ob a,\ob b)$. The composition in $\gr\cat{A}$ is induced from the composition in $\cat{A}$. Similarly, given a filtered superfunctor $F:\cat{A}\to\cat{B}$ we write $\gr F:\gr\cat{A}\to\gr\cat{B}$ for the associated graded superfunctor defined in the obvious way. 

For example, $\gsvec$ and $\fsvec$ are graded and filtered monoidal supercategories respectively. Note that $\gr\fsvec$ and $\gsvec$ are not the same, but there is a faithful superfunctor 
\begin{equation*}\label{gr fsvec to gsvec}
    \Gamma:\gr\fsvec\to\gsvec
\end{equation*}
which maps a filtered superspace to its associated graded superspace and maps $f+\Hom_\k(V,W)_{\leq i-1,\ep}\in \Hom_\k(V,W)_{\leq i,\ep}/\Hom_\k(V,W)_{\leq i-1,\ep}$
to $\gr_{i,\ep}f$.

\section{The degenerate affine oriented Brauer-Clifford supercategory}\label{section: AOBC}
In this section we define the monoidal supercategories $\OBC$ and $\AOBC$. First, however, we recall the definition of the supercategory $\OB$ from \cite{BCNR}. Our definitions will make use of the string calculus for strict monoidal supercategories (see \cref{string calculus}). In particular, each of the supercategories mentioned above admit a diagrammatic description. 
The objects in each of these supercategories are $\otimes$-generated by two objects denoted $\up$ and $\down$. Hence, the set of all objects can be identified with the set $\wrd$ of all finite words in the letters $\up$ and $\down$. The string diagrams for these supercategories will be made with oriented strings with an upward (resp.~downward) string corresponding to the object $\up$ (resp.~$\down$). For example, a diagram of the form 
\begin{equation*}
    \begin{tikzpicture}[baseline = 10pt, scale=0.5, color=\clr]
        \draw[->,thick] (0,-1) to (0,0);
        \draw[-,thick] (0,-0.1) to[out=up, in=down] (1,1.25);
        \draw[<-,thick] (1,-1) to[out=up, in=down] (1,1.25);
        \draw[<-,thick] (2,-1) to (2,0) to[out=up, in=down] (1,1.25);
        \draw[->,thick] (3,-1) to[out=up, in=down] (2,3);
        \draw[<-,thick] (0,3) to (0,2) to[out=down, in=up] (1,0.75);
        \draw[-,thick] (1,3) to[out=down, in=up] (1,0.75);
        \draw[->,thick] (1,3) to[out=down, in=up] (1,2);
        \draw[->,thick] (3,3) to (3,2.5);
        \draw[-,thick] (3,2.6) to[out=down, in=up] (3.5,1.5);
        \draw[<-,thick] (4,3) to (4,2.5) to[out=down, in=up] (3.5,1.5);
        \draw (1,1) node[circle,draw,thick,color=\clr,fill=white]{?};
        \draw (3.5,1.75) node[circle,draw,thick,color=\clr,fill=white]{?};
    \end{tikzpicture}
\end{equation*}
corresponds to a morphism $\up\down\down\up\to\up\down\up\down\up$.  
We will describe classes of diagrams which give bases for the Hom-spaces of $\OBC$ and $\AOBC$. In this section we will show these diagrams indeed span the appropriate Hom-spaces. Proof that the diagrams are linearly independent will be given in \cref{section: proof of AOBC basis theorem}.

\subsection{The oriented Brauer category} \label{subsection: OB}
In \cite{BCNR} the oriented Brauer category is defined diagrammatically and then a presentation is given in terms of generators and relations \cite[Theorem 1.1]{BCNR}. We take the latter description as our definition and view it as a supercategory concentrated in parity $\0$.  

\begin{definition}\label{OB defn}
    The \emph{oriented Brauer supercategory} $\OB$ is the $\k$-linear strict monoidal supercategory generated by two objects $\up, \down$ and three even morphisms $\lcup:\unit\to\up\down,$ $\lcap: \down\up\to\unit, \swap: \up\up\to\up\up$ subject to the following relations:
    \begin{equation}\label{OB relations 1 (symmetric group)}
        \begin{tikzpicture}[baseline = 10pt, scale=0.5, color=\clr]
            \draw[-,thick] (0,0) to[out=up, in=down] (1,1);
            \draw[->,thick] (1,1) to[out=up, in=down] (0,2);
            \draw[-,thick] (1,0) to[out=up, in=down] (0,1);
            \draw[->,thick] (0,1) to[out=up, in=down] (1,2);
        \end{tikzpicture} 
        ~=~
        \begin{tikzpicture}[baseline = 10pt, scale=0.5, color=\clr]
            \draw[-,thick] (0,0) to (0,1);
            \draw[->,thick] (0,1) to (0,2);
            \draw[-,thick] (1,0) to (1,1);
            \draw[->,thick] (1,1) to (1,2);
        \end{tikzpicture}
        ,\qquad
        \begin{tikzpicture}[baseline = 10pt, scale=0.5, color=\clr]
            \draw[->,thick] (0,0) to[out=up, in=down] (2,2);
            \draw[->,thick] (2,0) to[out=up, in=down] (0,2);
            \draw[->,thick] (1,0) to[out=up, in=down] (0,1) to[out=up, in=down] (1,2);
        \end{tikzpicture}
        ~=~ 
        \begin{tikzpicture}[baseline = 10pt, scale=0.5, color=\clr]
            \draw[->,thick] (0,0) to[out=up, in=down] (2,2);
            \draw[->,thick] (2,0) to[out=up, in=down] (0,2);
            \draw[->,thick] (1,0) to[out=up, in=down] (2,1) to[out=up, in=down] (1,2);
        \end{tikzpicture},
    \end{equation}
    \begin{equation}\label{OB relations 2 (zigzags and invertibility)}
        \begin{tikzpicture}[baseline = 10pt, scale=0.5, color=\clr]
            \draw[-,thick] (2,0) to[out=up, in=down] (2,1) to[out=up, in=right] (1.5,1.5) to[out=left,in=up] (1,1);
            \draw[->,thick] (1,1) to[out=down,in=right] (0.5,0.5) to[out=left,in=down] (0,1) to[out=up,in=down] (0,2);
        \end{tikzpicture} 
        ~=~
        \begin{tikzpicture}[baseline = 10pt, scale=0.5, color=\clr]
            \draw[-,thick] (0,0) to (0,1);
            \draw[->,thick] (0,1) to (0,2);
        \end{tikzpicture} 
        ,\qquad
        \begin{tikzpicture}[baseline = 10pt, scale=0.5, color=\clr]
            \draw[-,thick] (2,2) to[out=down, in=up] (2,1) to[out=down, in=right] (1.5,0.5) to[out=left,in=down] (1,1);
            \draw[->,thick] (1,1) to[out=up,in=right] (0.5,1.5) to[out=left,in=up] (0,1) to[out=down,in=up] (0,0);
        \end{tikzpicture} 
        ~=~
        \begin{tikzpicture}[baseline = 10pt, scale=0.5, color=\clr]
            \draw[-,thick] (0,2) to (0,1);
            \draw[->,thick] (0,1) to (0,0);
        \end{tikzpicture}
        ,\qquad
        \begin{tikzpicture}[baseline = 10pt, scale=0.5, color=\clr]
            \draw[-,thick] (2,2) to[out=down, in=up] (2,1) to[out=down, in=right] (1.5,0.5) to[out=left,in=down] (1,1);
            \draw[->,thick] (1,1) to[out=up,in=right] (0.5,1.5) to[out=left,in=up] (0,1) to[out=down,in=up] (0,0);
            \draw[->,thick] (0.7,0) to[out=up,in=down] (1.3,2);
        \end{tikzpicture}
        ~\text{ is invertible}.
    \end{equation}
\end{definition}
Note that the last relation is the assertion that there is
another distinguished generator 
\begin{equation}\label{left cross}
    \rswap:\up\down\to\down\up\text{ which is a two-sided inverse to }
    \begin{tikzpicture}[baseline = 5pt, scale=0.5, color=\clr]
        \draw[<-,thick] (0,0) to[out=up, in=down] (1,1);
        \draw[->,thick] (1,0) to[out=up, in=down] (0,1);
    \end{tikzpicture}
    ~:=~
    \begin{tikzpicture}[baseline = 10pt, scale=0.5, color=\clr]
        \draw[-,thick] (2,2) to[out=down, in=up] (2,1) to[out=down, in=right] (1.5,0.5) to[out=left,in=down] (1,1);
        \draw[->,thick] (1,1) to[out=up,in=right] (0.5,1.5) to[out=left,in=up] (0,1) to[out=down,in=up] (0,0);
        \draw[->,thick] (0.7,0) to[out=up,in=down] (1.3,2);
    \end{tikzpicture}
    ~.
\end{equation}

We define rightward cups/caps and downward crossings in $\OB$ as follows:
\begin{equation}\label{right cuaps - down cross}
    \begin{tikzpicture}[baseline = 5pt, scale=0.5, color=\clr]
        \draw[->,thick] (0,1) to[out=down,in=left] (0.5,0.35) to[out=right,in=down] (1,1);
    \end{tikzpicture}
    ~:=~
    \begin{tikzpicture}[baseline = 5pt, scale=0.5, color=\clr]
        \draw[->,thick] (0,1) to[out=down,in=up] (1,0) to[out=down,in=right] (0.5,-0.5) to[out=left,in=down] (0,0) to[out=up,in=down] (1,1);
    \end{tikzpicture}
    ~,\qquad
    \begin{tikzpicture}[baseline = 5pt, scale=0.5, color=\clr]
        \draw[->,thick] (0,0) to[out=up,in=left] (0.5,0.65) to[out=right,in=up] (1,0);
    \end{tikzpicture}
    ~:=~
    \begin{tikzpicture}[baseline = 5pt, scale=0.5, color=\clr]
        \draw[->,thick] (0,0) to[out=up,in=down] (1,1) to[out=up,in=right] (0.5,1.5) to[out=left,in=up] (0,1) to[out=down,in=up] (1,0);
    \end{tikzpicture}
    ~,\qquad
    \begin{tikzpicture}[baseline = 5pt, scale=0.5, color=\clr]
        \draw[->,thick] (0,1) to[out=down,in=up] (1,0);
        \draw[->,thick] (1,1) to[out=down,in=up] (0,0);
    \end{tikzpicture}
    ~:=~
    \begin{tikzpicture}[baseline = 10pt, scale=0.5, color=\clr]
        \draw[-,thick] (2,2) to[out=down, in=up] (2,1) to[out=down, in=right] (1.5,0.5) to[out=left,in=down] (1,1);
        \draw[->,thick] (1,1) to[out=up,in=right] (0.5,1.5) to[out=left,in=up] (0,1) to[out=down,in=up] (0,0);
        \draw[<-,thick] (0.7,0) to[out=up,in=down] (1.3,2);
    \end{tikzpicture}
    ~.
\end{equation}
An \emph{oriented Brauer diagram with bubbles} of type $\ob a\to\ob b$ is any string diagram obtained by stacking (vertically and horizontally) the defining generators of $\OB$ along with the diagrams \cref{left cross,right cuaps - down cross} in such a way that the result can be interpreted as a morphism in $\Hom_\OB(\ob a,\ob b)$. For example, here are two oriented Brauer diagrams with bubbles of type $\down\up^4\down^2\to\up^3\down^2$:
\begin{equation}\label{OB diagram examples}
    \begin{tikzpicture}[baseline = 10pt, scale=0.5, color=\clr]
        \draw[-<,thick] (0,1.5) to[out=left,in=up] (-0.5,1) to[out=down, in=left] (0,0.5) to[out=right, in=down] (0.5,1) to[out=up,in=right] (0,1.5) to (-0.15,1.5);
        \draw[->,thick] (4.4,1.3) to[out=right,in=up] (4.9,0.8) to[out=down,in=right] (4.4,0.3) to[out=left,in=right] (3.4,1.3) to[out=left,in=up] (2.9,0.8) to[out=down,in=left] (3.4,0.3) to[out=right,in=left] (4.4,1.3) to (4.5,1.3);
        \draw[->,thick] (2,0) to[out=up,in=down] (1,1) to[out=up,in=down] (1,2);
        \draw[->,thick] (4,0) to[out=up,in=down] (2,2);
        \draw[->,thick] (3,0) to[out=up,in=right] (2,0.75) to[out=left,in=up] (1,0);
        \draw[->,thick] (4,2) to[out=down,in=right] (3.5,1.5) to[out=left,in=down] (3,2);
        \draw[->,thick] (5,0) to[out=up,in=left] (6,1) to[out=right,in=up] (7,0);
        \draw[->,thick] (5,2) to[out=down,in=up] (6,0);
    \end{tikzpicture}
    \qquad\qquad
    \begin{tikzpicture}[baseline = 10pt, scale=0.5, color=\clr]
        \draw[->,thick] (5,2) to[out=down,in=right] (3.5,0.25) to[out=left,in=right] (1.5,1.5) to[out=left,in=up]  (1,0);
        \draw[->,thick] (6,2) to[out=down,in=up] (7,0);
        \draw[->,thick] (2,0) to[out=up,in=down] (4,2);
        \draw[->,thick] (3,0) to[out=up,in=down] (2,1) to[out=up,in=down] (3,2);
        \draw[->,thick] (4,0) to[out=up,in=down] (2,2);
        \draw[->,thick] (5,0) to (5,0.1) to[out=up,in=down] (6,0.75) to[out=up,in=up] (5,0.75) to[out=down,in=up] (6,0.1) to (6,0);
    \end{tikzpicture}
\end{equation}
The term \emph{bubble} refers to any component of such a diagram without an endpoint. In the examples above, the left diagram has two bubbles whereas the right has none. An \emph{oriented Brauer diagram} refers to an oriented Brauer diagram with bubbles that has no bubbles. We say that two oriented Brauer diagrams are \emph{equivalent} if they are of the same type and one diagram can be obtained from the other by continuously deforming its strands, possibly moving them through other strands and crossings, but keeping endpoints fixed. 
Moreover, we say two oriented Brauer diagrams with bubbles are \emph{equivalent} if they have the same number of bubbles and their underlying oriented Brauer diagrams (without bubbles) are equivalent. 
For example, the left (resp.\ right) diagram in \cref{OB diagram examples} is equivalent to the following digram on the left (resp.\ right):
\begin{equation*}   
    \begin{tikzpicture}[baseline = 10pt, scale=0.5, color=\clr]
        \draw[->,thick] (7.5,1.3) to[out=left,in=up] (7,0.8) to[out=down, in=left] (7.5,0.3) to[out=right, in=down] (8,0.8) to[out=up,in=right] (7.5,1.3) to (7.4,1.3);
        \draw[->,thick] (6.5,1.8) to[out=left,in=up] (6,1.3) to[out=down, in=left] (6.5,0.8) to[out=right, in=down] (7,1.3) to[out=up,in=right] (6.5,1.8) to (6.4,1.8);
        \draw[->,thick] (2,0) to[out=up,in=down] (3,0.5) to[out=up,in=down] (1,2);
        \draw[->,thick] (4,0) to (4,0.25) to[out=up,in=down] (2,2);
        \draw[->,thick] (3,0) to[out=up,in=right] (2,0.75) to[out=left,in=up] (1,0);
        \draw[->,thick] (4,2) to[out=down,in=right] (3.5,1.5) to[out=left,in=down] (3,2);
        \draw[->,thick] (5,0) to[out=up,in=left] (6,0.75) to[out=right,in=up] (7,0);
        \draw[->,thick] (5,2) to[out=down,in=up] (6,0);
    \end{tikzpicture}
    \qquad\qquad
    \begin{tikzpicture}[baseline = 10pt, scale=0.5, color=\clr]
        \draw[->,thick] (5,2) to (5,1.8) to[out=down,in=up]  (1,0);
        \draw[->,thick] (6,2) to[out=down,in=up] (7,0);
        \draw[->,thick] (2,0) to (2,0.2) to[out=up,in=down] (4,2);
        \draw[->,thick] (3,0) to[out=up,in=down] (4,1) to[out=up,in=down] (3,2);
        \draw[->,thick] (4,0) to (4,0.2) to[out=up,in=down] (2,2);
        \draw[->,thick] (5,0) to[out=up,in=left] (5.5,1) to[out=right,in=up]  (6,0);
    \end{tikzpicture}
\end{equation*}

Of course, any morphism in $\OB$ can be realized as a $\k$-linear combination of oriented Brauer diagrams with bubbles. It follows from \cite[Theorem 1.1]{BCNR} that two oriented Brauer diagrams with bubbles represent the same morphism in $\OB$ if and only if they are equivalent. Moreover, the set of all equivalence classes of oriented Brauer diagrams with bubbles of type $\ob a\to \ob b$ is a basis for $\Hom_\OB(\ob a, \ob b)$. 

As explained in \cite[\S1]{BCNR}, $\OB$ is a rigid symmetric monoidal supercategory. Briefly, the symmetric braiding on $\OB$ is the obvious one given on generating objects by the crossings $\swap, \lswap, \rswap, \dswap$. Moreover, the generating objects are dual to one another with evaluation and coevaluation maps given by $\lcap, \rcap$ and $\lcup, \rcup$ respectively.


\subsection{The oriented Brauer-Clifford supercategory} \label{subsection: OBC}
Adjoining ``Clifford generators'' to $\OB$ results in the following:

\begin{definition}\label{OBC defn}
    The \emph{oriented Brauer-Clifford supercategory} $\OBC$ is the $\k$-linear strict monoidal supercategory generated by two objects $\up, \down$; three even morphisms $\lcup:\unit\to\up\down, \lcap: \down\up\to\unit, \swap: \up\up\to\up\up$; and one odd morphism $\cldot:\up\to\up$ subject to \cref{OB relations 1 (symmetric group),OB relations 2 (zigzags and invertibility)}, and the following relations:
    \begin{equation}\label{OBC relations}
        \begin{tikzpicture}[baseline = 7.5pt, scale=0.5, color=\clr]
            \draw[->,thick] (0,0) to[out=up, in=down] (0,1.5);
            \draw (0,1) \wdot;
            \draw (0,0.5) \wdot;
        \end{tikzpicture}
        ~=~
        \begin{tikzpicture}[baseline = 7.5pt, scale=0.5, color=\clr]
            \draw[->,thick] (0,0) to[out=up, in=down] (0,1.5);
        \end{tikzpicture}
        ,\qquad
        \begin{tikzpicture}[baseline = 7.5pt, scale=0.5, color=\clr]
            \draw[->,thick] (0,0) to[out=up, in=down] (1,1.5);
            \draw[->,thick] (1,0) to[out=up, in=down] (0,1.5);
            \draw (0.2,0.5) \wdot;
        \end{tikzpicture}
        ~=~
        \begin{tikzpicture}[baseline = 7.5pt, scale=0.5, color=\clr]
            \draw[->,thick] (0,0) to[out=up, in=down] (1,1.5);
            \draw[->,thick] (1,0) to[out=up, in=down] (0,1.5);
            \draw (0.8,1) \wdot;
        \end{tikzpicture}
        ,\qquad
        \reflectbox{\begin{tikzpicture}[baseline = -2pt, scale=0.5, color=\clr]
            \draw[->,thick] (0,0) to[out=up, in=left] (0.5,0.5);
            \draw[-,thick] (0.4,0.5) to (0.5,0.5) to[out=right,in=up] (1,0) to[out=down, in=right] (0.5,-0.5) to[out=left,in=down] (0,0);
            \draw (0,0) \wdot;
        \end{tikzpicture}}
        ~=0.
    \end{equation}
\end{definition}
Note that the last relation in \cref{OBC relations} makes use of the rightward cup defined by \cref{right cuaps - down cross}.

We define the downward analogue of $\cldot$ as follows:
\begin{equation}\label{down white dot}
    \begin{tikzpicture}[baseline = 12pt, scale=0.5, color=\clr]
        \draw[->,thick] (0,2) to (0,0);
        \draw (0,1) \wdot;
    \end{tikzpicture}
    ~:=~
    \begin{tikzpicture}[baseline = 12pt, scale=0.5, color=\clr]
        \draw[->,thick] 
            (1,2) to (1,1)
            to[out=down,in=right] (0.75,0.25)
            to[out=left,in=down] (0.5,1)
            to[out=up,in=right] (0.25,1.75)
            to[out=left,in=up] (0,1) to (0,0);
        \draw (0.5,1) \wdot;
    \end{tikzpicture}
\end{equation}

There is an obvious monoidal superfunctor $\OB\to\OBC$ which allows us to view equivalence classes of  oriented Brauer diagrams with bubbles as morphisms in $\OBC$. We define an \emph{oriented Brauer-Clifford diagram} (resp.\ \emph{with bubbles}) to be an oriented Brauer diagram (resp.~with bubbles) with finitely many $\emptydot$'s on its segments, where \emph{segment} refers to a connected component of the diagram obtained when all crossings and local extrema are deleted. For example, here are two oriented Brauer-Clifford diagrams of type $\down\up\up\up\down\down\to\up\down\up\down\up\down$:
\begin{equation}\label{two OBC diagrams}
}
        ~=0.
\end{equation}
It follows that any oriented Brauer-Clifford diagram with at least one bubble reduces to zero in $\OBC$.  Hence, the Hom-spaces in $\OBC$ are spanned by oriented Brauer-Clifford diagrams (without bubbles) that have at most one $\emptydot$ on each strand. Next, we refine this spanning set:

We say that an oriented Brauer-Clifford diagram is \emph{normally ordered} if 
\begin{itemize}\label{N.O. OBC defn}
    \item[$\diamond$] it has at most one $\emptydot$ on each strand; all $\emptydot$'s are on outward-pointing boundary segments (i.e.\ segments which intersect the boundary at a point that is directed out of the picture); 
    \item[$\diamond$] all $\emptydot$'s which occur on upwardly oriented segments are positioned at the same height; similarly, all $\emptydot$'s which occur on downwardly oriented segments are positioned at the same height.
\end{itemize}
For example, in \cref{two OBC diagrams} the diagram on the right is normally ordered and the one on the left is not.  We say that two normally ordered oriented Brauer-Clifford diagrams are \emph{equivalent} if their underlying oriented Brauer diagrams are equivalent (see \cref{subsection: OB}) and their corresponding strands have the same number of $\emptydot$'s on them. Note that two equivalent normally ordered oriented Brauer-Clifford diagrams correspond to the same morphism in $\OBC$. Moreover, it follows from the discussion above that the Hom-spaces in $\OBC$ are spanned by normally ordered oriented Brauer-Clifford diagrams. In fact, we have:

\begin{theorem}\label{OBC basis theorem}
    For any $\ob a, \ob b\in\wrd$ the superspace $\Hom_\OBC(\ob a,\ob b)$ has basis given by equivalence classes of normally ordered oriented Brauer-Clifford diagrams of type $\ob a\to\ob b$.
\end{theorem}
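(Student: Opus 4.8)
First I would record that the discussion preceding the theorem already shows that the normally ordered oriented Brauer-Clifford diagrams of type $\ob a\to\ob b$ \emph{span} $\Hom_\OBC(\ob a,\ob b)$, so that the entire content of the theorem is their linear independence. Since $\OBC$ is rigid symmetric monoidal with $\up^{*}=\down$, the unit and counit isomorphisms yield a natural isomorphism $\Hom_\OBC(\ob a,\ob b)\cong\Hom_\OBC(\unit,\ob b\otimes\ob a^{*})$, where $\ob a^{*}$ denotes the word $\ob a$ read backwards with $\up$ and $\down$ interchanged; one checks that it carries normally ordered diagrams bijectively to normally ordered diagrams (after renormalizing the heights of the $\emptydot$'s). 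Hence it suffices to prove independence in each $\Hom_\OBC(\unit,\ob c)$, where a normally ordered diagram is simply a fixed-point-free pairing of the letters of $\ob c$ into $\up$--$\down$ arcs, decorated by a subset of those arcs each bearing a single $\emptydot$ at its $\up$-end (in particular $\Hom_\OBC(\unit,\ob c)=0$ unless $\ob c$ has equally many $\up$'s and $\down$'s).

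The plan is then to detect these decorated pairings inside a module category. Assume first $\chr\k=0$ and fix $n$ at least the length of $\ob c$. Consider the monoidal superfunctor $\Phi=\Phi_{n}\colon\OBC\to\fq(n)\text{-supermodules}$ from the introduction, which sends $\up\mapsto V$, $\down\mapsto V^{*}$, the cup and cap to the coevaluation and evaluation maps, $\swap$ to the super-flip of $V\otimes V$, and $\cldot$ to the odd $\fq(n)$-module endomorphism $c$ of $V$. I claim that for such $n$ the images $\Phi_{n}(d)$ of the normally ordered diagrams $d\colon\unit\to\ob c$ are linearly independent in $\Hom_{\fq(n)}(\k,V^{\ob c})$; since $\Phi_{n}$ is a functor it carries a linear dependence to a linear dependence, so this forces the $d$ themselves to be independent, which is the theorem. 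To prove the claim one notes that $\Phi_{n}(d)$ is the explicit element of $V^{\ob c}$ obtained by summing, for each arc of $d$, over a single index $i$ and placing $v_{i}$ (or $c(v_{i})$ if that arc is dotted) at its $\up$-end and $v_{i}^{*}$ at its $\down$-end, with a global sign dictated by the super-interchange law; contracting $\Phi_{n}(d)$ against tensors assembled from enough pairwise \emph{distinct} basis vectors and covectors of $V$ then recovers, one arc at a time, both the pairing and the dotted subset, so no nontrivial linear combination of the $\Phi_{n}(d)$ can vanish.

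For an arbitrary ground ring one runs the same argument over $\Z[1/2]$ using an integral form of $\OBC$ and then specializes, exactly the base change that is packaged in \cref{T:integralAOBC}. Alternatively, the whole of linear independence can be bootstrapped from \cref{AOBC basis theorem}: there is an obvious monoidal superfunctor $\OBC\to\AOBC$, because the generating morphisms of $\OBC$ occur among those of $\AOBC$ and the defining relations of $\OBC$ hold in $\AOBC$; under it a normally ordered oriented Brauer-Clifford diagram maps to a normally ordered $\AOBC$-diagram carrying no affine ($\xdot$) dots, and distinct such diagrams map to distinct members of the basis of $\Hom_\AOBC(\ob a,\ob b)$ furnished by \cref{AOBC basis theorem}; they are therefore independent there, hence independent in $\OBC$.

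The genuine obstacle is the linear independence step itself (equivalently, \cref{AOBC basis theorem}): making the ``asymptotic faithfulness'' of $\Phi$, or of $\Psi$ via the generic Verma supermodule $M$, into an effective statement. Concretely one must (i) choose the test tensors so that the data of which $\up$-position is arced to which $\down$-position is faithfully read off, using that for $n\gg 0$ the $\fq(n)$-action does not entangle distinct basis vectors of $V$ too much --- this is exactly where the hypothesis on $n$ enters, since small $n$ introduces Schur-Weyl-Sergeev relations such as vanishing antisymmetrizers --- and (ii) keep scrupulous track of the signs produced by the super-interchange law and by the odd parity of $\cldot$, since a single overlooked sign would destroy the separation. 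Everything else --- the spanning statement, the rigidity reduction, and the functoriality bootstrap --- is formal.
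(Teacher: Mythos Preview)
Your ``alternatively'' paragraph is precisely the paper's proof: the authors push the spanning set along the obvious superfunctor $\OBC\to\AOBC$, observe that distinct normally ordered Brauer--Clifford diagrams land on distinct basis elements of $\Hom_\AOBC(\ob a,\ob b)$ furnished by \cref{AOBC basis theorem}, and conclude independence. The paper even remarks explicitly that a direct combinatorial proof exists but is omitted in favour of this deduction.

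Your first approach, via asymptotic faithfulness of $\Phi_n$, is a genuinely different route. It is essentially the injectivity half of Schur--Weyl--Sergeev duality (cf.\ \cref{R:JungKangResult}), and is in principle sound; but you have only sketched the separation step (``contracting against tensors assembled from distinct basis vectors \dots\ recovers the pairing and the dotted subset'') rather than executed it, and you yourself flag the sign bookkeeping and the choice of test tensors as the real work. Note also a circularity trap: you cannot simply cite the injectivity statement in \cref{R:JungKangResult}, since that remark is phrased via the isomorphism $\End_\OBC(\up^r)\cong\operatorname{Ser}_r$ of \cref{C:OBCandSergeev}, whose proof already uses \cref{OBC basis theorem}. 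Your bare-hands tensor argument avoids this, but it does need to be written out. The paper's route through $\AOBC$ trades this combinatorics for the generic Verma supermodule computation, which costs more setup but simultaneously delivers the affine and cyclotomic basis theorems and the central element formulae.
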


It is possible to give a straightforward combinatorial proof of  \cref{OBC basis theorem}. However, we omit such a proof since \cref{OBC basis theorem} is a consequence of our basis theorem for $\AOBC$ as explained in \cref{Consequences of main result}.  

Meanwhile, let us point the following consequence of \cref{OBC basis theorem}.  For $r \geq 1$, let $C_{r}$ denote the Clifford algebra generated by $c_{1}, \dotsc , c_{r}$, subject to the relations $c_{i}^{2}=1$ and $c_{i}c_{j}=-c_{j}c_{i}$ for $i \neq j$.  We view $C_{r}$ as a superalgebra by declaring the generators to be odd.  Let $\k\Sigma_{r}$ denote the group algebra of the symmetric group on $r$ letters viewed as a superalgebra concentrated in parity $\0$.  Then the (finite) \emph{Sergeev superalgebra} is
\begin{equation}\label{E:SergeevAlgebraDef}
\operatorname{Ser}_{r} = C_{r} \otimes \k \Sigma_{r}
\end{equation}
as a superspace with $C_{r}\otimes 1 \cong C_{r}$ and $1 \otimes \k \Sigma_{r} \cong \k \Sigma_{r}$ as subsuperalgebras and with mixed relation $wc_{i}= c_{w(i)}w$ for $i=1,\dotsc ,r$ and all $w \in \Sigma_{r}$.  

\begin{corollary}\label{C:OBC End is isomorphic to Sergeev Algebra}\label{C:OBCandSergeev}  For $r \geq 1$, 
\[
\End_{\OBC}\left(\up^{r} \right) \cong \operatorname{Ser}_{r}.
\]

\end{corollary}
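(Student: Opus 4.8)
The plan is to exhibit the obvious superalgebra homomorphism $\phi\colon\Ser_r\to\End_\OBC(\up^r)$ and then read off from \cref{OBC basis theorem} that it is bijective. Recall that $\Ser_r$ is generated by $c_1,\dots,c_r$ together with the simple transpositions $s_1,\dots,s_{r-1}\in\Sigma_r$, subject to the Clifford relations $c_i^2=1$ and $c_ic_j=-c_jc_i$ for $i\ne j$, the Coxeter relations among the $s_i$, and the mixed relations $s_ic_j=c_{s_i(j)}s_i$. Define $\phi$ by
\[
\phi(c_i)=\id_{\up^{i-1}}\otimes\cldot\otimes\id_{\up^{r-i}},\qquad
\phi(s_i)=\id_{\up^{i-1}}\otimes\swap\otimes\id_{\up^{r-i-1}}.
\]
Since $\cldot$ is odd and $\swap$ is even, $\phi$ is parity-preserving.

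The first step is to check that $\phi$ is well defined, i.e.\ that the displayed generators satisfy the defining relations of $\Ser_r$. The relation $c_i^2=1$ is the leftmost relation of \cref{OBC relations}; the relation $c_ic_j=-c_jc_i$ for $i\ne j$ is an instance of the super-interchange law \cref{super-interchange}, namely two $\emptydot$'s sitting on distinct upward strands, exactly as in the computation displayed just before \cref{prop: OBC slides}; the Coxeter relations hold by \cref{OB relations 1 (symmetric group)}; and the mixed relation $s_ic_j=c_{s_i(j)}s_i$ is one of the identities in \cref{up cross c-slide} of \cref{prop: OBC slides} when $j\in\{i,i+1\}$, and when $j\notin\{i,i+1\}$ is again an instance of \cref{super-interchange} (now with the even morphism $\swap$, so with no sign).

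The second step is bijectivity. By \cref{OBC basis theorem}, $\End_\OBC(\up^r)$ has $\k$-basis the equivalence classes of normally ordered oriented Brauer--Clifford diagrams of type $\up^r\to\up^r$. In such a diagram every boundary point is the endpoint of an upwardly oriented strand, so no two bottom points, and no two top points, can be joined; hence there are no cups, caps, or bubbles, and the underlying oriented Brauer diagram is the permutation diagram $\sigma_w$ of a unique $w\in\Sigma_r$, which is a vertical composite of the elementary crossings in the image of $\phi$. The remaining datum is the set $S\subseteq\{1,\dots,r\}$ of strands carrying an $\emptydot$; using isotopy and the slide relations of \cref{prop: OBC slides}, each such dot can be pushed down onto an incoming strand, so that our diagram equals $\sigma_w\circ\phi(c_S)=\phi(w\,c_S)$ up to a sign, where $c_S$ denotes the product $\prod_{j\in S}c_j$ with factors in increasing order. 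Thus $\phi$ carries the $\k$-basis $\{w\,c_S : w\in\Sigma_r,\ S\subseteq\{1,\dots,r\}\}$ of $\Ser_r=C_r\otimes\k\Sigma_r$ to a basis of $\End_\OBC(\up^r)$ (a per-element $\pm1$ rescaling of the diagram basis). Being a $\k$-linear isomorphism as well as an algebra map, $\phi$ is then an isomorphism of superalgebras.

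The real content of the corollary is \cref{OBC basis theorem}; the rest is bookkeeping. The two steps that demand care are the relation-checking of step one (fixing conventions so that the sign in $c_ic_j=-c_jc_i$ and the index $s_i(j)$ come out correctly) and, in step two, identifying $\phi(w\,c_S)$ with the correct normally ordered diagram — the subtlest point being the dot-sliding, where one must invoke the slide relations of \cref{prop: OBC slides} rather than naive isotopy because $\cldot$ is odd. Since these signs merely rescale the diagram basis element by element, they do not affect the conclusion, and I expect no obstacle beyond this careful bookkeeping.
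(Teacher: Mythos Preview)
Your proof is correct and follows essentially the same approach as the paper's: define the obvious superalgebra map, check relations, then invoke \cref{OBC basis theorem} to get bijectivity. The only differences are cosmetic---the paper numbers strands right-to-left rather than left-to-right, and it finishes with a straight dimension count ($2^r r!$ normally ordered diagrams matching $\dim\Ser_r$) rather than your explicit matching of bases up to sign. One small slip: the far-commutativity $s_is_j=s_js_i$ for $|i-j|>1$ is not part of \cref{OB relations 1 (symmetric group)} but follows from the (super-)interchange law with the even morphism $\swap$; you handle the analogous point correctly for $s_ic_j$, so this is just a citation issue, not a gap.
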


\begin{proof} The Sergeev superalgebra is generated by $s_{1}, \dotsc , s_{r-1}, c_{1}, \dotsc , c_{r}$ subject to $s_{i}^{2}=1$, $s_{i}s_{j}=s_{j}s_{i}$ if $|i-j|>1$, $s_{i}s_{i+1}s_{i}=s_{i+1}s_{i}s_{i+1}$, $c_{i}^{2}=1$, $c_{i}c_{j}=-c_{j}c_{i}$ if $i\neq j$, and $s_{i}c_{i}=c_{i+1}s_{i}$ for all admissible $1 \leq i, j \leq r$.  From this it follows that there is a well defined superalgebra map 
\[
\varphi: \operatorname{Ser}_{r} \to \End_{\OBC}\left(\up^{r} \right)
\] given by 
\begin{align*}
\varphi\left(s_{i} \right) &= 
        \begin{tikzpicture}[baseline = 7.5pt, scale=0.5, color=\clr]
            \draw[->,thick] (0,0) to[out=up, in=down] (0,1.5);
        \end{tikzpicture}^{r-i-1}
          \begin{tikzpicture}[baseline = 7.5pt, scale=0.5, color=\clr]
            \draw[->,thick] (0,0) to[out=up, in=down] (1,1.5);
            \draw[->,thick] (1,0) to[out=up, in=down] (0,1.5);
        \end{tikzpicture} \;\;
        \begin{tikzpicture}[baseline = 7.5pt, scale=0.5, color=\clr]
            \draw[->,thick] (0,0) to[out=up, in=down] (0,1.5);
        \end{tikzpicture}^{i-1}, \\
\varphi\left(c_{i} \right) &=  \begin{tikzpicture}[baseline = 7.5pt, scale=0.5, color=\clr]
            \draw[->,thick] (0,0) to[out=up, in=down] (0,1.5);
        \end{tikzpicture}^{r-i}
          \begin{tikzpicture}[baseline = 7.5pt, scale=0.5, color=\clr]
            \draw[->,thick] (0,0) to[out=up, in=down] (0,1.5);
            \draw (0,0.7) \wdot;
        \end{tikzpicture} \;\;
        \begin{tikzpicture}[baseline = 7.5pt, scale=0.5, color=\clr]
            \draw[->,thick] (0,0) to[out=up, in=down] (0,1.5);
        \end{tikzpicture}^{i-1}.
\end{align*}  Using \cref{OBC basis theorem} it is straightforward to see that $\varphi$ is an isomorphism.  In particular, a direct count shows that there are $2^{r}(r!)$ normally ordered oriented Brauer diagrams of type $\up^{r}\to \up^{r}$ which matches the dimension of $\operatorname{Ser}_{r}$.
\end{proof}

More generally, given nonnegative integers $r, s$, let $BC_{r,s}$ denote the \emph{walled Brauer-Clifford superalgebra}. This superalgebra was introduced in \cite{JK} where it is denoted $\vec{B}_{r,s}$ and called the \emph{walled Brauer superalgebra}. Our notation and terminology is chosen so as to be consistent with that of \cite{BGHKW}. In the next result we assume $\k$ contains $\sqrt{-1}$.

\begin{corollary}\label{C:EndOBCisomorphictoWalledBrauerCliffordAlgebra} Let $\mathbf{a}$ be a word consisting of $r$ $\uparrow$'s and $s$ $\downarrow$'s.  Then 
\[
\End_{\OBC }(\mathbf{a}) \cong BC_{r,s}
\] as superalgebras.
\end{corollary}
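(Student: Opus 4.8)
The plan is to follow the pattern of the proof of \cref{C:OBCandSergeev}. Since any two words in $\up,\down$ having $r$ arrows $\up$ and $s$ arrows $\down$ are related by a composite of crossings --- which are invertible in $\OBC$ --- conjugation by such an isomorphism gives $\End_\OBC(\mathbf a)\cong\End_\OBC(\up^r\down^s)$, so we may assume $\mathbf a=\up^r\down^s$. Recall from \cite{JK} the description of $BC_{r,s}$, with symmetric-group generators $s_1,\dots,s_{r-1}$ and $\bar s_1,\dots,\bar s_{s-1}$ for the two blocks of strands, a walled-Brauer generator $e$ joining the two strands adjacent to the wall, and Clifford generators $c_1,\dots,c_r$ and $\bar c_1,\dots,\bar c_s$. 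I would define a superalgebra map $\psi\colon BC_{r,s}\to\End_\OBC(\up^r\down^s)$ on these generators by sending $s_i$ (resp.\ $\bar s_j$) to the crossing of adjacent $\up$-strands (resp.\ adjacent $\down$-strands), $e$ to the cap--cup diagram $(\id_{\up^{r-1}}\otimes\lcup\otimes\id_{\down^{s-1}})\circ(\id_{\up^{r-1}}\otimes\rcap\otimes\id_{\down^{s-1}})$, $c_i$ to a white dot on the $i$-th $\up$-strand, and $\bar c_j$ to $\sqrt{-1}$ times a white dot on the $j$-th $\down$-strand. The rescaling by $\sqrt{-1}$ is forced: by \cref{reduce two down Cliffords} a white dot on a downward strand squares to $-\id$, while the Clifford generators of \cite{JK} square to $+1$. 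This is exactly where the hypothesis $\sqrt{-1}\in\k$ is used, and it accounts for the absence of such a hypothesis in \cref{C:OBCandSergeev}.

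The first task is to check that $\psi$ is well defined, i.e.\ that it respects the defining relations of $BC_{r,s}$. The symmetric-group relations among the $s_i$ and the $\bar s_j$, and the fact that the two families commute, are immediate from \cref{OB relations 1 (symmetric group)}. The Clifford relations $c_i^2=1$, $c_ic_j=-c_jc_i$ for $i\ne j$, and their barred analogues (after the $\sqrt{-1}$ rescaling) follow from \cref{OBC relations}, \cref{reduce two down Cliffords}, and the super-interchange law \cref{super-interchange}. The mixed relations between the symmetric-group generators and the Clifford generators are exactly the dot-slide relations of \cref{prop: OBC slides}. The remaining, and most delicate, relations are those involving $e$: the walled-Brauer relations $e^2=0$ (the loop parameter being $0$), $es_{r-1}e=e$, $e\bar s_1e=e$, the interactions of $e$ with the other $s_i$ and $\bar s_j$, and the relations governing how $e$ absorbs the Clifford generators. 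These are verified by direct diagrammatic computation, using that a closed loop vanishes in $\OBC$ by \cref{dim equals 0}, that a loop carrying a white dot vanishes by the last relation of \cref{OBC relations}, and the zigzag and dot-slide relations \cref{OB relations 2 (zigzags and invertibility)} and \cref{prop: OBC slides}. I expect this to be the main obstacle: it is routine, but it requires careful tracking of signs coming both from the super-interchange law (the white dot is odd) and from the $\sqrt{-1}$ rescaling.

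Finally I would show $\psi$ is an isomorphism, again as in \cref{C:OBCandSergeev}. For surjectivity, \cref{OBC basis theorem} shows that $\End_\OBC(\up^r\down^s)$ is spanned by equivalence classes of normally ordered oriented Brauer--Clifford diagrams of type $\up^r\down^s\to\up^r\down^s$, and each such diagram is obtained by horizontal and vertical composition from crossings, cap--cup pairs, and single white dots on strands, i.e.\ from the images of the generators of $BC_{r,s}$. For injectivity I would compare dimensions: by \cref{OBC basis theorem} those normally ordered diagrams form a basis of the target, and a direct count (walled Brauer diagrams on $r+s$ strands, each strand carrying at most one dot) shows that their number equals $\dim BC_{r,s}$, the size of the diagram basis of \cite{JK}; a surjection between finite-dimensional superspaces of equal dimension is an isomorphism. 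Equivalently, and robustly over any ground ring of characteristic $\ne 2$ containing $\sqrt{-1}$, one checks directly that $\psi$ carries the diagram basis of $BC_{r,s}$ bijectively onto the normally ordered diagram basis of $\End_\OBC(\up^r\down^s)$, which is the concrete incarnation of the point made in the introduction that the intricate global multiplication rule of \cite{JK} is reproduced by the local relations of $\OBC$.
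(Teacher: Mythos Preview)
Your plan is essentially the paper's: reduce to a fixed word via the symmetric braiding, write down an explicit superalgebra map from the generators-and-relations presentation of $BC_{r,s}$ in \cite[Theorem~5.1]{JK}, check the relations diagrammatically, and then invoke \cref{OBC basis theorem} together with a dimension count. The paper does exactly this (choosing the word $\down^s\up^r$ rather than your $\up^r\down^s$, which is immaterial) and likewise leaves the detailed relation check to the reader.

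The one substantive discrepancy is your rescaling: you put a factor of $\sqrt{-1}$ only on the downward Clifford generators, whereas the paper puts $\sqrt{-1}$ on \emph{all} of the $c_i$, for $1\le i\le r+s$. Your asymmetric choice may well reproduce the individual squaring relations under your assumption that every $c_i$ in \cite{JK} squares to $+1$, but it cannot survive the wall relation that links $c_r$ and $c_{r+1}$ through $e_{r,r+1}$. By \cref{prop: OBC slides} a white dot slides around a cap or cup in $\OBC$ with coefficient $\pm 1$; hence any relation of the shape $e_{r,r+1}c_r=\pm\, e_{r,r+1}c_{r+1}$ (or its companion with $e_{r,r+1}$ on the other side) forces the scalars attached to $c_r$ and to $c_{r+1}$ to agree up to sign. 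With your choice of $1$ on one side of the wall and $\sqrt{-1}$ on the other, that relation would acquire a stray factor of $\sqrt{-1}$ and fail. This is precisely the kind of sign issue you already flag as needing care, and you would catch it the moment you worked through \cite{JK}'s actual relation list; the fix is the paper's uniform $\sqrt{-1}$ on every $c_i$ (which, incidentally, tells you that in Jung--Kang's conventions the $c_i$ on the two sides of the wall do \emph{not} both square to $+1$).
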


\begin{proof}  We first note  that if $\mathbf{a}$ and $\mathbf{b}$ are two words with $r$ $\up$'s and $s$ $\down$'s, then 
\[
\End_{\OBC}(\mathbf{a}) \cong \End_{\OBC }(\mathbf{b}).
\]  The isomorphism is given by applying suitable symmetric braidings for $\OBC$.
 Consequently, assume $\mathbf{a}=\down^{s}\up^{r}$. By \cite[Theorem 5.1]{JK}, $BC_{r,s}$ is generated by even generators $s_{1}, \dotsc , s_{r-1}, s_{r+1}, \dotsc , s_{r+s-1}, e_{r,r+1}$ and odd generators $c_{1}, \dotsc , c_{r+s}$ subject to an explicit set of relations.  By checking relations we see that there is a superalgebra map $\alpha: BC_{r,s} \to \End_{\OBC}\left(\down^{s}\up^{r} \right)$ given by
\begin{align*}
    \alpha\left(s_{i} \right) &= \begin{cases}
        \begin{tikzpicture}[baseline = 7.5pt, scale=0.5, color=\clr]
            \draw[<-,thick] (0,0) to[out=up, in=down] (0,1.5);
        \end{tikzpicture}^{s}
        \begin{tikzpicture}[baseline = 7.5pt, scale=0.5, color=\clr]
            \draw[->,thick] (0,0) to[out=up, in=down] (0,1.5);
        \end{tikzpicture}^{r-i-1}
        \begin{tikzpicture}[baseline = 7.5pt, scale=0.5, color=\clr]
            \draw[->,thick] (0,0) to[out=up, in=down] (1,1.5);
            \draw[->,thick] (1,0) to[out=up, in=down] (0,1.5);
        \end{tikzpicture} \;\;
        \begin{tikzpicture}[baseline = 7.5pt, scale=0.5, color=\clr]
            \draw[->,thick] (0,0) to[out=up, in=down] (0,1.5);
        \end{tikzpicture}^{i-1}, & i=1, \dots , r-1;\\
        \begin{tikzpicture}[baseline = 7.5pt, scale=0.5, color=\clr]
            \draw[<-,thick] (0,0) to[out=up, in=down] (0,1.5);
        \end{tikzpicture}^{r+s-i-1}
        \begin{tikzpicture}[baseline = 7.5pt, scale=0.5, color=\clr]
            \draw[<-,thick] (0,0) to[out=up, in=down] (1,1.5);
            \draw[<-,thick] (1,0) to[out=up, in=down] (0,1.5);
        \end{tikzpicture} \;\;       
        \begin{tikzpicture}[baseline = 7.5pt, scale=0.5, color=\clr]
            \draw[<-,thick] (0,0) to[out=up, in=down] (0,1.5);
        \end{tikzpicture}^{i-r-1}
        \begin{tikzpicture}[baseline = 7.5pt, scale=0.5, color=\clr]
            \draw[->,thick] (0,0) to[out=up, in=down] (0,1.5);
        \end{tikzpicture}^{r}, &  i=r+1, \dots , r+s-1; 
    \end{cases},\\
    \alpha\left(e_{r,r+1} \right)&=  
        \begin{tikzpicture}[baseline = 7.5pt, scale=0.5, color=\clr]
            \draw[<-,thick] (0,0) to[out=up, in=down] (0,1.5);
        \end{tikzpicture}^{s-1}
        \begin{tikzpicture}[baseline = 7.5pt, scale=0.5, color=\clr]
            \draw[<-,thick] (0,0) to[out=up, in =left] (0.5,0.5) to[out=right,in=up] (1,0);
            \draw[->,thick] (0,1.5) to[out=down, in =left] (0.5,1) to[out=right,in=down] (1,1.5);
        \end{tikzpicture} \;\;
        \begin{tikzpicture}[baseline = 7.5pt, scale=0.5, color=\clr]
            \draw[->,thick] (0,0) to[out=up, in=down] (0,1.5);
        \end{tikzpicture}^{r-1},\\
    \alpha\left(c_{i} \right) &= \begin{cases}
        \sqrt{-1}~
        \begin{tikzpicture}[baseline = 7.5pt, scale=0.5, color=\clr]
            \draw[<-,thick] (0,0) to[out=up, in=down] (0,1.5);
        \end{tikzpicture}^{s}
        \begin{tikzpicture}[baseline = 7.5pt, scale=0.5, color=\clr]
            \draw[->,thick] (0,0) to[out=up, in=down] (0,1.5);
        \end{tikzpicture}^{r-i}
        \begin{tikzpicture}[baseline = 7.5pt, scale=0.5, color=\clr]
            \draw[->,thick] (0,0) to[out=up, in=down] (0,1.5);
            \draw (0,0.7) \wdot;
        \end{tikzpicture} \;\;
        \begin{tikzpicture}[baseline = 7.5pt, scale=0.5, color=\clr]
            \draw[->,thick] (0,0) to[out=up, in=down] (0,1.5);
        \end{tikzpicture}^{i-1}, & i=1, \dotsc , r; \\
        \sqrt{-1}~
        \begin{tikzpicture}[baseline = 7.5pt, scale=0.5, color=\clr]
            \draw[<-,thick] (0,0) to[out=up, in=down] (0,1.5);
        \end{tikzpicture}^{r+s-i}
        \begin{tikzpicture}[baseline = 7.5pt, scale=0.5, color=\clr]
            \draw[<-,thick] (0,0) to[out=up, in=down] (0,1.5);
            \draw (0,0.7) \wdot;
        \end{tikzpicture} \;\;
        \begin{tikzpicture}[baseline = 7.5pt, scale=0.5, color=\clr]
            \draw[<-,thick] (0,0) to[out=up, in=down] (0,1.5);
        \end{tikzpicture}^{i-r-1}
        \begin{tikzpicture}[baseline = 7.5pt, scale=0.5, color=\clr]
            \draw[->,thick] (0,0) to[out=up, in=down] (0,1.5);
        \end{tikzpicture}^{r}, & i=r+1, \dotsc , r+s.
    \end{cases}  
\end{align*}
As for the previous theorem one can use \cref{OBC basis theorem} to verify that $\alpha$ is an isomorphism.  We leave the details to the reader. 
\end{proof}

\subsection{The supercategory \texorpdfstring{$\AOBC$}{AOBC}}\label{subsection: AOBC}
We now introduce an affine version of $\OBC$.

\begin{definition}\label{AOBC defn}
    The \emph{degenerate affine oriented Brauer-Clifford supercategory} $\AOBC$ is the $\k$-linear strict monoidal supercategory generated by two objects $\up, \down$; four even morphisms $\lcup:\unit\to\up\down, \lcap: \down\up\to\unit, \swap: \up\up\to\up\up, \xdot:\up\to\up$; and one odd morphism $\cldot:\up\to\up$ subject to \cref{OB relations 1 (symmetric group)}, \cref{OB relations 2 (zigzags and invertibility)}, \cref{OBC relations}, and the following relations:
    \begin{equation}\label{AOBC relations}
        \begin{tikzpicture}[baseline = 7.5pt, scale=0.5, color=\clr]
            \draw[->,thick] (0,0) to[out=up, in=down] (0,1.5);
            \draw (0,1) \bdot;
            \draw (0,0.5) \wdot;
        \end{tikzpicture}
        ~=-~
        \begin{tikzpicture}[baseline = 7.5pt, scale=0.5, color=\clr]
            \draw[->,thick] (0,0) to[out=up, in=down] (0,1.5);
            \draw (0,1) \wdot;
            \draw (0,0.5) \bdot;
        \end{tikzpicture}
        ,\qquad
        \begin{tikzpicture}[baseline = 7.5pt, scale=0.5, color=\clr]
            \draw[->,thick] (0,0) to[out=up, in=down] (1,1.5);
            \draw[->,thick] (1,0) to[out=up, in=down] (0,1.5);
            \draw (0.2,1) \bdot;
        \end{tikzpicture}
        ~-~
        \begin{tikzpicture}[baseline = 7.5pt, scale=0.5, color=\clr]
            \draw[->,thick] (0,0) to[out=up, in=down] (1,1.5);
            \draw[->,thick] (1,0) to[out=up, in=down] (0,1.5);
            \draw (0.8,0.5) \bdot;
        \end{tikzpicture}
        ~=~
        \begin{tikzpicture}[baseline = 7.5pt, scale=0.5, color=\clr]
            \draw[->,thick] (0,0) to[out=up, in=down] (0,1.5);
            \draw[->,thick] (1,0) to[out=up, in=down] (1,1.5);
        \end{tikzpicture}
        ~-~
        \begin{tikzpicture}[baseline = 7.5pt, scale=0.5, color=\clr]
            \draw[->,thick] (0,0) to[out=up, in=down] (0,1.5);
            \draw[->,thick] (1,0) to[out=up, in=down] (1,1.5);
            \draw (0,0.7) \wdot;
            \draw (1,0.7) \wdot;
        \end{tikzpicture}.
    \end{equation} 
\end{definition} 

We define the downward analogue of $\xdot$ as follows:
\begin{equation}\label{down black dot}
    \begin{tikzpicture}[baseline = 12pt, scale=0.5, color=\clr]
        \draw[->,thick] (0,2) to (0,0);
        \draw (0,1) \bdot;
    \end{tikzpicture}
    ~:=~
    \begin{tikzpicture}[baseline = 12pt, scale=0.5, color=\clr]
        \draw[->,thick] 
            (1,2) to (1,1)
            to[out=down,in=right] (0.75,0.25)
            to[out=left,in=down] (0.5,1)
            to[out=up,in=right] (0.25,1.75)
            to[out=left,in=up] (0,1) to (0,0);
        \draw (0.5,1) \bdot;
    \end{tikzpicture}
\end{equation}

We define a \emph{dotted oriented Brauer-Clifford diagram} (resp.\ \emph{with bubbles}) to be an oriented Brauer diagram (resp.\ with bubbles) with finitely many $\emptydot$'s and $\fulldot$'s on its segments. For example, \cref{two AOBC diagrams} shows two dotted oriented Brauer-Clifford diagrams with bubbles of type $\down^2\up^2\to\down^2\up^3\down$. 
Given a nonnegative integer $k$, we will draw a $\fulldot$ labeled by $\color{\clr}k$ to denote $k$ $\fulldot$'s on a strand; that is, the vertical composition of $k$ $\xdot$'s or $k$ $\downxdot$'s. For example, the diagram on the right of \cref{two AOBC diagrams} could have been drawn as
\begin{equation*}
    \begin{tikzpicture}[baseline = 25pt, scale=0.35, color=\clr]
        \draw[<-,thick] (2,0) to[out=up,in=down] (0,5);
        \draw[->,thick] (6,0) to[out=up,in=down] (6,5);
        \draw[<-,thick] (0,0) to[out=up,in=left] (2,1.5) to[out=right,in=up] (4,0);
        \draw[->,thick] (2,5) to[out=down,in=left] (3,4) to[out=right,in=down] (4,5);
        \draw[->,thick] (9.1,3) to (9,3) to[out=left,in=up] (8,2) to[out=down,in=left] (9,1)
                        to[out=right,in=down] (10,2) to[out=up,in=right] (9,3);
        \draw[->,thick] (10,5) to[out=down,in=right] (9,4) to[out=left,in=down] (8,5);
        \draw (0.05,4.5) \bdot;
        \draw (-0.55,4.5) node{2};
        \draw (2.22,4.35) \bdot;
        \draw (3.78,4.35) \wdot;
        \draw (10,2) \bdot;
        \draw (6,0.8) \bdot;
        \draw (6.6,0.8) node{3};
        \draw (0.3,0.8) \wdot;
        \draw (1.9,0.8) \wdot;
    \end{tikzpicture}
\end{equation*}
We can interpret any dotted oriented Brauer-Clifford diagram with bubbles as a morphism in $\AOBC$, and the Hom-spaces in $\AOBC$ are certainly spanned by all dotted oriented Brauer-Clifford diagrams with bubbles. For all $\ob a, \ob b\in\wrd$ and each $k\in\Z$ we let $\Hom_{\AOBC}(\ob a, \ob b)_{\leq k,\bar{0}}$ (resp.~$\Hom_{\AOBC}(\ob a, \ob b)_{\leq k,\bar{1}}$) denote the $\k$-span of all dotted oriented Brauer-Clifford diagrams with bubbles of type $\ob a\to\ob b$ having at most $k$ $\fulldot$'s and an even (resp.~odd) number of $\emptydot$'s. This gives $\AOBC$ the structure of a filtered monoidal supercategory (see \cref{subsection: graded and filtered super-cats}). Given a dotted oriented Brauer-Clifford diagram with bubbles, $d$, we will write $\deg(d)$ for the number of $\fulldot$'s appearing in $d$. For example, $\deg(d)=7$ when $d$ is either of the diagrams in \cref{two AOBC diagrams}. 

\begin{definition}\label{D:N.O. dotted OBC diagram}
A dotted oriented Brauer-Clifford diagram with bubbles is \emph{normally ordered} if 
\begin{itemize} 
    \item[$\diamond$] removing all bubbles and all $\fulldot$'s results in a normally ordered oriented Brauer-Clifford diagram;
    \item[$\diamond$] each bubble has zero $\emptydot$'s and an odd number of $\fulldot$'s, are crossing-free, counterclockwise, and there are no other strands shielding it from the rightmost edge of the picture;
    \item[$\diamond$] each $\fulldot$ is either on a bubble or on an inward-pointing boundary segment; 
    \item[$\diamond$] whenever a $\fulldot$ and a $\emptydot$ appear on a segment that is both inward and outward-pointing, the $\emptydot$ appears ahead of the $\fulldot$ in the direction of the orientation. 
\end{itemize}
\end{definition}

For example, in \cref{two AOBC diagrams} the diagram on the right is normally ordered and the one on the left is not. We say that two normally ordered dotted oriented Brauer-Clifford diagrams with bubbles are \emph{equivalent} if their underlying oriented Brauer diagrams with bubbles are equivalent and their corresponding strands have the same number of $\emptydot$'s and $\fulldot$'s. We can now state our main result:

\begin{theorem}\label{AOBC basis theorem}
    For any $\ob a, \ob b\in\wrd$ the superspace $\Hom_\AOBC(\ob a,\ob b)$ has basis given by equivalence classes of normally ordered dotted oriented Brauer-Clifford diagrams with bubbles of type $\ob a\to\ob b$.
\end{theorem}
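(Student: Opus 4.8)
The plan is to establish the two halves of the theorem separately: that the normally ordered dotted oriented Brauer--Clifford diagrams with bubbles \emph{span} every $\Hom_\AOBC(\ob a,\ob b)$, and that they are \emph{linearly independent}. Spanning is a finite diagram manipulation. Filter $\Hom_\AOBC(\ob a,\ob b)$ by the number of $\fulldot$'s, as in the paragraph following \cref{D:N.O. dotted OBC diagram}. The second relation in \eqref{AOBC relations} shows that a $\fulldot$ crosses a strand at the cost of a term with strictly fewer $\fulldot$'s, and combining it with \cref{prop: OBC slides}, the first relation in \eqref{AOBC relations}, and \eqref{reduce two down Cliffords} yields the slides of $\fulldot$'s through cups, caps and $\emptydot$'s. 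Working modulo lower filtration degree, one pushes each $\fulldot$ along its strand onto an inward-pointing boundary segment or onto a bubble; then the facts about $\OBC$ established in \cref{subsection: OBC} --- $\emptydot$'s reduce to at most one per strand, bubbles with no $\fulldot$ vanish by \eqref{dim equals 0} and \eqref{reduce two down Cliffords}, and normally ordered oriented Brauer--Clifford diagrams span $\Hom_\OBC$ --- put the $\fulldot$-free part in normal form, after which isotopy and the slide relations make the bubbles crossing-free, counterclockwise and unshielded, and \eqref{AOBC relations} fixes the order of a $\fulldot$ past an $\emptydot$ on a boundary segment. This proves spanning and gives, for each $k$, the bound that the $\fulldot$-degree-$\le k$ part of $\Hom_\AOBC(\ob a,\ob b)$ has dimension at most the number of normal forms of degree $\le k$ (with matching parity).

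\textbf{Reduction to $\up^r$.} Since $\OBC$ is rigid symmetric monoidal (inherited from $\OB$) and $\xdot$ is compatible with the duality via \eqref{down black dot}, $\AOBC$ is rigid symmetric monoidal with $\up^\ast=\down$. Bending strands and inserting braidings produces filtered isomorphisms $\Hom_\AOBC(\ob a,\ob b)\cong\Hom_\AOBC(\unit,\ob a^\ast\ob b)\cong\End_\AOBC(\up^r)$ whenever the balancing condition on $\ob a,\ob b$ holds (and both sides are $0$ otherwise), and one checks that bending preserves the combinatorial count of normal forms of each degree (it preserves both the number of underlying diagrams and the number of inward-pointing boundary segments). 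Since a spanning set of the right cardinality is a basis, it therefore suffices to prove that for each $r$ the normally ordered diagrams of type $\up^r\to\up^r$ are linearly independent; by the spanning bound this amounts to showing that the images in $\End_\AOBC(\up^r)$ of the monomials in the $r$ $\fulldot$'s are linearly independent over the subalgebra $\Ser_r\cong\End_\OBC(\up^r)$ of \cref{C:OBCandSergeev}.

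\textbf{Linear independence via $\fq(n)$.} Fix $r$ and take $n\ge r$. Following the monoidal superfunctor $\Psi\colon\AOBC\to\EndUksmod$ of the introduction by evaluation at a $\fq(n)$-supermodule $M$ gives a superalgebra homomorphism $\Psi_M\colon\End_\AOBC(\up^r)\to\End_{\fq(n)}(V^{\otimes r}\otimes M)$, and it is enough to exhibit an $M$ for which $\Psi_M$ is injective. Take $M=M(\lambda)$ a ``generic'' Verma supermodule for $\fq(n)$, induced from a generic weight $\lambda$ of the Cartan --- which, the Cartan being non-abelian, comes with a Clifford-module structure, precisely the structure the odd generator $\cldot$ records. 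Under $\Psi_M$ the subalgebra $\Ser_r$ acts through classical mixed Schur--Weyl--Sergeev duality, hence faithfully for $n\ge r$ (\cref{C:OBCandSergeev}, \cref{T:MixedSchurWeylDuality}), while the $i$-th $\fulldot$ acts as a partial Casimir--Jucys--Murphy operator $\sum_j\Omega_{j,i}$ built from the Casimir tensor of $\fq(n)$, with $j$ running over the tensor factors that precede position $i$ (including the factor occupied by $M$). The crucial point is that on a weight-and-PBW basis of $V^{\otimes r}\otimes M(\lambda)$ these operators are, modulo the $\fulldot$-degree filtration, simultaneously triangular with diagonal entries that are pairwise distinct polynomials in $\lambda$ once $\lambda$ is generic; a standard triangularity argument then keeps the $\fulldot$-monomials linearly independent over the faithful $\Ser_r$-action, so $\Psi_M$ is injective. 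Reversing the reduction proves \cref{AOBC basis theorem} when $\chr\k=0$; the case of an arbitrary commutative ring $\k$ follows by base change, since the spanning bound holds integrally and the characteristic-zero statement forces the normal forms to be a $\k$-basis, and \cref{OBC basis theorem} then drops out upon restricting to $\fulldot$-free diagrams.

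\textbf{The hard part.} The main obstacle is this final step: constructing $M(\lambda)$ correctly in type $Q$, where ``weights'' carry a Clifford-module structure; computing the action of the $\fulldot$ on $V^{\otimes r}\otimes M(\lambda)$; and verifying that its eigenvalues are pairwise distinct for generic $\lambda$ --- all while carefully tracking every super-sign coming from the interchange law and the Clifford relations, since it is exactly this Clifford data that the odd generator $\cldot$ is designed to capture. By comparison the diagrammatic spanning argument, the rigidity reduction, and the appeal to Schur--Weyl--Sergeev duality are routine.
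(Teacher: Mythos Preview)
Your overall strategy---spanning via diagram manipulations, reduction to $\up^r$ by rigidity, and linear independence via a superfunctor to $\fq(n)$-modules evaluated on a generic Verma---is precisely the paper's approach. But there is a genuine gap in the linear independence step.

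You write that it suffices to fix $n\ge r$ and exhibit an $M$ for which $\Psi_M:\End_\AOBC(\up^r)\to\End_{\fq(n)}(V^{\otimes r}\otimes M)$ is injective. This cannot work: $\End_\AOBC(\up^r)$ contains the bubble algebra $\End_\AOBC(\unit)\cong\k[\Delta_1,\Delta_3,\Delta_5,\dots]$ as a central subalgebra, and under $\Psi_M$ each $\Delta_{2k-1}$ acts by a central element of $U(\fq(n))$, hence by a symmetric polynomial (essentially an odd power sum) in the $n$ Cartan variables. For fixed $n$ these infinitely many power sums are not algebraically independent, so $\Psi_M$ has nontrivial kernel no matter how generic $\lambda$ is. Your reduction ``amounts to showing that the monomials in the $r$ $\fulldot$'s are linearly independent over $\Ser_r$'' simply omits the bubbles from the basis, and that omission is where the argument breaks.

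The paper's fix is to let $n$ depend on the specific linear combination being tested: given a finite sum $\sum_d f_d(\Delta_1,\Delta_3,\dots)\,d$ over bubble-free normally ordered diagrams $d$, choose $n$ large enough that the finitely many relevant odd power sums remain algebraically independent and their leading monomials (in a graded lex order on $\k[h_1,\dots,h_n]$) avoid $h_1,\dots,h_r$. Working over the polynomial ring $U(\fh_{\bar 0})$ rather than at a single $\lambda$, the top-degree component of $\Psi_M$ applied to $v_r\otimes\cdots\otimes v_1\otimes\hw$ then separates the bubble contribution (living in $h_{r+1},\dots,h_n$) from the $\fulldot$ contribution $h_r^{\beta_r}\cdots h_1^{\beta_1}$, and a leading-monomial argument shows the result is nonzero. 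This separation of variables is the missing idea.

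A secondary issue: you invoke \cref{C:OBCandSergeev} and \cref{T:MixedSchurWeylDuality} to get faithfulness of the $\Ser_r$-action, but in the paper those results are deduced \emph{from} \cref{AOBC basis theorem} (see the disclaimer at the start of \cref{SS: mixed SWS duality}). This circularity is avoidable---classical Sergeev duality gives injectivity of $\Ser_r\to\End_{U(\fq)}(V^{\otimes r})$ for $n\ge r$ independently---but the paper's proof in fact never appeals to duality at all; it computes the top degree of $\Psi_M(d)(v_r\otimes\cdots\otimes v_1\otimes\hw)$ directly.
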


\subsection{Consequences of Theorem \ref{AOBC basis theorem} for \texorpdfstring{$\OBC$}{OBC}}\label{Consequences of main result} 
For this subsection we assume  \cref{AOBC basis theorem}. 
Let $\OB(0)$ denote the quotient of $\OB$ obtained by requiring \cref{dim equals 0}. Then the Hom-spaces in $\OB(0)$ have a basis given by equivalence classes of oriented Brauer diagrams \emph{without bubbles}. Moreover, there are obvious monoidal superfunctors 
\begin{equation}\label{OB to OBC to AOBC}
    \OB(0) \to \OBC \to \AOBC.
\end{equation}
We saw in \cref{subsection: OBC} that the Hom-spaces in $\OBC$ are spanned by normally ordered oriented Brauer-Clifford diagrams. It follows from \cref{AOBC basis theorem} that the image of the equivalence classes of normally ordered oriented Brauer-Clifford diagrams under \cref{OB to OBC to AOBC} are linearly independent in $\AOBC$, whence they are linearly independent in $\OBC$. \cref{OBC basis theorem} follows. Moreover, it follows that the monoidal superfunctors in \cref{OB to OBC to AOBC} are both faithful.

\subsection{Normally ordered diagrams span}\label{N.O. diagrams span}
In \cref{subsection: OBC} we showed that normally ordered oriented Brauer-Clifford diagrams span the Hom-spaces in $\OBC$. The goal of this subsection is to show that normally ordered dotted oriented Brauer-Clifford diagrams with bubbles span the Hom-spaces in $\AOBC$. We start with an analogue of \cref{prop: OBC slides} for $\AOBC$. In the proof, we will make use of \cref{prop: OBC slides} without reference. 

\begin{proposition}\label{prop: AOBC slides}
    The following relations hold in $\AOBC$: 
    \begin{align}
        \label{left cap/cup x-slide}

    \end{equation*}
    for all integers $k\geq 0$. 
    The computation above shows $\Delta_{-1}\Delta'_1+\Delta_1\Delta'_{-1}=0$.
    A similar computation can be used to show  
    \begin{equation*}
        \sum_{0\leq i\leq \frac{k+1}{2}}\Delta_{2i-1}\Delta'_{k-2i}=0
    \end{equation*}
    whenever $k$ is a nonnegative odd integer.
\end{remark}

Now, using \cref{OBC relations,reduce two down Cliffords,AOBC relations,down black white dot}  we can reduce the number of $\emptydot$'s on any bubble to zero or one.  Finally, the following proposition shows that the only nonzero counterclockwise bubbles with at most one $\emptydot$ are $\Delta_1,\Delta_3,\Delta_5,\ldots$, whence they generate $\End_\AOBC(\unit)$. 

\begin{proposition}\label{prop: bubble reducing}
    The following relations hold in $\AOBC$ for any nonnegative integer $k$:
    \begin{equation}\label{vanishing bubbles}
        \begin{tikzpicture}[baseline = 5pt, scale=0.5, color=\clr]
            \draw[->,thick] (0.6,1) to (0.5,1) to[out=left,in=up] (0,0.5)
                            to[out=down,in=left] (0.5,0) 
                            to[out=right,in=down] (1,0.5)
                            to[out=up,in=right] (0.5,1);
            \draw (1,0.5) \bdot;
            \draw (0,0.5) \wdot;
            \draw (1.5,0.5) node{\footnotesize{$k$}};
        \end{tikzpicture}
        =0,\qquad
        \begin{tikzpicture}[baseline = 5pt, scale=0.5, color=\clr]
            \draw[->,thick] (0.6,1) to (0.5,1) to[out=left,in=up] (0,0.5)
                            to[out=down,in=left] (0.5,0) 
                            to[out=right,in=down] (1,0.5)
                            to[out=up,in=right] (0.5,1);
            \draw (1,0.5) \bdot;
            \draw (1.75,0.5) node{\footnotesize{$2k$}};
        \end{tikzpicture}
        =0.
    \end{equation}
\end{proposition}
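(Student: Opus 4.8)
The plan is to derive both identities in \cref{vanishing bubbles} from a single manoeuvre: place an extra $\emptydot$ on the bubble and then evaluate the resulting closed diagram in two different ways. In the first way one slides an $\emptydot$ along the strand; here the first relation in \cref{AOBC relations} lets one move a $\emptydot$ past a $\fulldot$ at the cost of a sign, and the first relation in \cref{OBC relations} lets a pair of $\emptydot$'s on the same strand cancel. In the second way one slides an $\emptydot$ the long way around the bubble; here the $\emptydot$-slides over cups and caps supplied by \cref{prop: OBC slides} and \cref{prop: AOBC slides} are sign-free, so the only sign produced is the one dictated by the super-interchange law \cref{super-interchange} when two decorations of given parities are interchanged (that is, by cyclicity of the bubble-closure map $\End_\AOBC(\up)\to\End_\AOBC(\unit)$).

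For the first identity the case $k=0$ is precisely the third relation in \cref{OBC relations}, so assume $k\geq1$ and write $\gamma_k$ for the bubble on the left-hand side. Group the $\emptydot$ together with one adjacent $\fulldot$ into a single (odd) decoration. Sliding this decoration once around the bubble, past the remaining $k-1$ $\fulldot$'s, is an instance of the cyclicity above, and it is sign-free because the block of $k-1$ $\fulldot$'s is even; then sliding the $\emptydot$ back out of the pair past its single $\fulldot$ along the strand costs exactly one factor $-1$ by the first relation in \cref{AOBC relations} and reproduces $\gamma_k$. Hence $\gamma_k=-\gamma_k$, and since $\chr\k\neq2$ we get $\gamma_k=0$.

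For the second identity, form the closed diagram $M$ consisting of the bubble $\Delta_{2k}$ with one extra $\emptydot$ placed just before and one just after the block of $2k$ $\fulldot$'s. Sliding the second $\emptydot$ down past the $2k$ $\fulldot$'s (so $2k$ applications of the first relation in \cref{AOBC relations}, hence no net sign) and then cancelling the pair of $\emptydot$'s by the first relation in \cref{OBC relations} shows $M=\Delta_{2k}$. On the other hand, sliding that same $\emptydot$ the other way around the bubble to reach the first extra $\emptydot$ is again cyclicity, but now the two decorations being interchanged — the lone $\emptydot$, and the block consisting of an $\emptydot$ together with the $2k$ $\fulldot$'s — are both odd, so \cref{super-interchange} produces a single factor $-1$; cancelling the pair then gives $M=-\Delta_{2k}$. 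Comparing, $\Delta_{2k}=-\Delta_{2k}$, so $\Delta_{2k}=0$ (the case $k=0$ is also \cref{dim equals 0}).

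The one place where care is genuinely required is in making the ``slide around the bubble'' steps rigorous: one has to check that rotating a decorated counterclockwise bubble is a bona fide diagram isotopy, and that all the $\emptydot$'s, $\fulldot$'s, cups and caps that get passed contribute only the signs recorded above. This is exactly where the sign-free slide relations of \cref{prop: OBC slides} and \cref{prop: AOBC slides} are used, and where one must keep careful track of the parities and of the placement of the dots relative to the orientation of the bubble; beyond that, each identity reduces to a short calculation with \cref{AOBC relations} and \cref{OBC relations}.
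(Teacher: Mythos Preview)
Your proof is correct and follows essentially the same strategy as the paper's: both arguments exploit cyclicity of the bubble closure (i.e., the sign-free $\emptydot$/$\fulldot$ slides over cups and caps from \cref{prop: OBC slides} and \cref{prop: AOBC slides} combined with the super-interchange law) together with the anticommutation $\emptydot\,\fulldot = -\,\fulldot\,\emptydot$ to show each bubble equals its own negative. For the second identity your two-evaluations-of-$M$ argument is exactly the paper's computation reorganised; for the first, the paper is marginally more direct in that it slides a single $\fulldot$ through the $\emptydot$ using the downward-strand version \cref{down black white dot} rather than invoking trace cyclicity, but the content is the same. One small point: after your cyclic slide and the single anticommutation you land at (in trace notation) $-\operatorname{tr}(x^k c)$, and identifying this with $-\gamma_k$ tacitly uses one more sign-free $\emptydot$ slide over a cup; you might make that last step explicit.
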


\begin{proof}
    The left relation when $k=0$ follows from the right of \cref{OBC relations}. If $k>0$ the left relation follows from the calculation below since $\chr\k\not=2$: 
    \begin{equation*}
       \begin{tikzpicture}[baseline = 5pt, scale=0.5, color=\clr]
            \draw[->,thick] (0.6,1) to (0.5,1) to[out=left,in=up] (0,0.5)
                            to[out=down,in=left] (0.5,0) 
                            to[out=right,in=down] (1,0.5)
                            to[out=up,in=right] (0.5,1);
            \draw (1,0.5) \bdot;
            \draw (0,0.5) \wdot;
            \draw (1.5,0.5) node{\footnotesize{$k$}};
        \end{tikzpicture}
        \stackrel{\cref{left cap/cup x-slide}}{=}
        \begin{tikzpicture}[baseline = 5pt, scale=0.5, color=\clr]
            \draw[->,thick] (0.6,1) to (0.5,1) to[out=left,in=up] (0,0.5)
                            to[out=down,in=left] (0.5,0) 
                            to[out=right,in=down] (1,0.5)
                            to[out=up,in=right] (0.5,1);
            \draw (1,0.5) \bdot;
            \draw (0.1,0.75) \bdot;
            \draw (0.1,0.25) \wdot;
            \draw (2.1,0.5) node{\footnotesize{$k-1$}};
        \end{tikzpicture}
        \stackrel{\cref{down black white dot}}{=}-~
        \begin{tikzpicture}[baseline = 5pt, scale=0.5, color=\clr]
            \draw[->,thick] (0.6,1) to (0.5,1) to[out=left,in=up] (0,0.5)
                            to[out=down,in=left] (0.5,0) 
                            to[out=right,in=down] (1,0.5)
                            to[out=up,in=right] (0.5,1);
            \draw (1,0.5) \bdot;
            \draw (0.1,0.75) \wdot;
            \draw (0.1,0.25) \bdot;
            \draw (2.1,0.5) node{\footnotesize{$k-1$}};
        \end{tikzpicture}
        \stackrel{\cref{right cap/cup x-slide}}{=}-~
        \begin{tikzpicture}[baseline = 5pt, scale=0.5, color=\clr]
            \draw[->,thick] (0.6,1) to (0.5,1) to[out=left,in=up] (0,0.5)
                            to[out=down,in=left] (0.5,0) 
                            to[out=right,in=down] (1,0.5)
                            to[out=up,in=right] (0.5,1);
            \draw (1,0.5) \bdot;
            \draw (0,0.5) \wdot;
            \draw (1.5,0.5) node{\footnotesize{$k$}};
        \end{tikzpicture}
        .
    \end{equation*}
    The right relation follows from the calculation below:
    \begin{align*}
        \begin{tikzpicture}[baseline = 13pt, scale=0.5, color=\clr]
            \draw[->,thick] (0.6,2) to (0.5,2) to[out=left,in=up] (0,1.5) to (0,0.5)
                            to[out=down,in=left] (0.5,0) 
                            to[out=right,in=down] (1,0.5) to (1,1.5)
                            to[out=up,in=right] (0.5,2);
            \draw (1,1)   \bdot;
            \draw (1.7,1) node{\footnotesize{$2k$}};
        \end{tikzpicture}
        &
        ~\hspace{-8pt}\stackrel{\cref{OBC relations}}{=}
        \begin{tikzpicture}[baseline = 13pt, scale=0.5, color=\clr]
            \draw[->,thick] (0.6,2) to (0.5,2) to[out=left,in=up] (0,1.5) to (0,0.5)
                            to[out=down,in=left] (0.5,0) 
                            to[out=right,in=down] (1,0.5) to (1,1.5)
                            to[out=up,in=right] (0.5,2);
            \draw (1,1.5) \wdot;
            \draw (1,1)   \wdot;
            \draw (1,0.5) \bdot;
            \draw (1.7,0.5) node{\footnotesize{$2k$}};
        \end{tikzpicture}
        ~\hspace{-8pt}\stackrel{\cref{left cap/cup c-slide}}{=}
        \begin{tikzpicture}[baseline = 13pt, scale=0.5, color=\clr]
            \draw[->,thick] (0.6,2) to (0.5,2) to[out=left,in=up] (0,1.5) to (0,0.5)
                            to[out=down,in=left] (0.5,0) 
                            to[out=right,in=down] (1,0.5) to (1,1.5)
                            to[out=up,in=right] (0.5,2);
            \draw (0,1.5) \wdot;
            \draw (1,1)   \wdot;
            \draw (1,0.5) \bdot;
            \draw (1.7,0.5) node{\footnotesize{$2k$}};
        \end{tikzpicture}
        ~\hspace{-8pt}\stackrel{\cref{super-interchange}}{=}-~
        \begin{tikzpicture}[baseline = 13pt, scale=0.5, color=\clr]
            \draw[->,thick] (0.6,2) to (0.5,2) to[out=left,in=up] (0,1.5) to (0,0.5)
                            to[out=down,in=left] (0.5,0) 
                            to[out=right,in=down] (1,0.5) to (1,1.5)
                            to[out=up,in=right] (0.5,2);
            \draw (1,1.5) \wdot;
            \draw (1,1)   \bdot;
            \draw (0,0.5) \wdot;
            \draw (1.7,1) node{\footnotesize{$2k$}};
        \end{tikzpicture}
        \\&
        ~\hspace{-8pt}\stackrel{\cref{right cap/cup c-slide}}{=}-~
        \begin{tikzpicture}[baseline = 13pt, scale=0.5, color=\clr]
            \draw[->,thick] (0.6,2) to (0.5,2) to[out=left,in=up] (0,1.5) to (0,0.5)
                            to[out=down,in=left] (0.5,0) 
                            to[out=right,in=down] (1,0.5) to (1,1.5)
                            to[out=up,in=right] (0.5,2);
            \draw (1,1.5) \wdot;
            \draw (1,1)   \bdot;
            \draw (1,0.5) \wdot;
            \draw (1.7,1) node{\footnotesize{$2k$}};
        \end{tikzpicture}
        ~\hspace{-8pt}\stackrel{\cref{AOBC relations}}{=}-(-1)^{2k}~
        \begin{tikzpicture}[baseline = 13pt, scale=0.5, color=\clr]
            \draw[->,thick] (0.6,2) to (0.5,2) to[out=left,in=up] (0,1.5) to (0,0.5)
                            to[out=down,in=left] (0.5,0) 
                            to[out=right,in=down] (1,0.5) to (1,1.5)
                            to[out=up,in=right] (0.5,2);
            \draw (1,1.5) \wdot;
            \draw (1,1)   \wdot;
            \draw (1,0.5) \bdot;
            \draw (1.7,0.5) node{\footnotesize{$2k$}};
        \end{tikzpicture}
        ~\hspace{-8pt}\stackrel{\cref{OBC relations}}{=}-~
        \begin{tikzpicture}[baseline = 13pt, scale=0.5, color=\clr]
            \draw[->,thick] (0.6,2) to (0.5,2) to[out=left,in=up] (0,1.5) to (0,0.5)
                            to[out=down,in=left] (0.5,0) 
                            to[out=right,in=down] (1,0.5) to (1,1.5)
                            to[out=up,in=right] (0.5,2);
            \draw (1,1) \bdot;
            \draw (1.7,1) node{\footnotesize{$2k$}};
        \end{tikzpicture}
        .
    \end{align*}
\end{proof}

The following lemma implies that normally ordered oriented Brauer-Clifford diagrams with bubbles span the Hom-spaces of $\AOBC$.

\begin{lemma}\label{lemma: normally ordered diagrams span}
    For any $\ob a, \ob b\in\wrd$ the superspace $\Hom_\AOBC(\ob a,\ob b)_{\leq k}$ is equal to the $\k$-span of all equivalence classes of normally ordered dotted oriented Brauer-Clifford diagrams with bubbles of type $\ob a\to\ob b$ with at most $k$ $\fulldot$'s.
\end{lemma}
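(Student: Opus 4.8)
The plan is to induct on the number of $\fulldot$'s. It suffices to show that every dotted oriented Brauer--Clifford diagram with bubbles $d$ of type $\ob a\to\ob b$ having exactly $m$ $\fulldot$'s lies in the $\k$-span of normally ordered such diagrams with at most $m$ $\fulldot$'s; the claim for $\Hom_\AOBC(\ob a,\ob b)_{\leq k}$ then follows since this space is spanned by all dotted diagrams with at most $k$ $\fulldot$'s. For the base case $m=0$ the diagram $d$ has no $\fulldot$'s, so any bubble it contains has zero $\fulldot$'s and vanishes by \cref{dim equals 0}, while the bubble-free remainder is, by the discussion in \cref{subsection: OBC}, a $\k$-linear combination of normally ordered oriented Brauer--Clifford diagrams; these are precisely the normally ordered dotted diagrams with no $\fulldot$'s, so the base case holds.

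For the inductive step fix $m\geq1$ and assume the claim for all smaller values. The key observation is that every slide or reduction we shall use either preserves the number of $\fulldot$'s or replaces the diagram by a $\k$-linear combination of diagrams with strictly fewer $\fulldot$'s: inspection of \cref{prop: OBC slides} shows $\emptydot$'s slide freely everywhere, and of \cref{prop: AOBC slides} that $\fulldot$'s slide past cups, caps and local extrema with no correction term while sliding a $\fulldot$ through a crossing costs only terms with two fewer $\fulldot$'s; similarly \cref{OBC relations}, \cref{AOBC relations}, \cref{reduce two down Cliffords}, \cref{down black white dot}, \cref{R:clockwise bubbles} and \cref{prop: bubble reducing} never raise the $\fulldot$-count. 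Hence every correction term produced below has at most $m-1$ $\fulldot$'s and already lies in the desired span by the inductive hypothesis, so we may work ``modulo lower $\fulldot$-count.''

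Within this licence we proceed in three stages. First, using \cref{prop: OBC slides}, \cref{prop: AOBC slides}, \cref{R:clockwise bubbles} and the bubble computations preceding it, every bubble of $d$ is pulled to the right of the picture and made crossing-free, unnested and counterclockwise; then \cref{OBC relations}, \cref{reduce two down Cliffords} and \cref{down black white dot} reduce each bubble to at most one $\emptydot$, the left relation of \cref{prop: bubble reducing} removes that $\emptydot$, and the right relation of \cref{prop: bubble reducing} kills any bubble with an even number of $\fulldot$'s, leaving bubbles with no $\emptydot$'s and an odd number of $\fulldot$'s and a bubble-free remainder. Second, applying the oriented Brauer normalization of \cref{subsection: OBC} to the underlying oriented Brauer diagram of the remainder drags the $\fulldot$'s through crossings and extrema and, modulo lower $\fulldot$-count, makes the underlying oriented Brauer--Clifford diagram normally ordered. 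Third, each non-bubble strand is an oriented path and hence has a unique boundary endpoint at which its orientation points into the picture, i.e.\ an inward-pointing boundary segment; using \cref{prop: AOBC slides} we slide each $\fulldot$ of the remainder along its strand to that segment, and whenever a segment is simultaneously inward- and outward-pointing and carries a $\fulldot$ together with a $\emptydot$ we put the $\emptydot$ ahead using the first relation of \cref{AOBC relations}. The result is a normally ordered diagram, so $d$ lies in the span of normally ordered diagrams with at most $m$ $\fulldot$'s together with terms of smaller $\fulldot$-count, which closes the induction.

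The main obstacle is the bookkeeping: one must check that \emph{no} manipulation ever raises the $\fulldot$-count (so that the induction closes), and that every $\fulldot$ really can be routed along its strand to an inward-pointing boundary segment. A subtle point is that passing a $\fulldot$ through a crossing resolves that crossing and can create or destroy a bubble, so the bubble-normalization of the first stage must in principle be re-run on such correction terms; this is harmless precisely because those terms have fewer $\fulldot$'s and are thus already covered by the inductive hypothesis.
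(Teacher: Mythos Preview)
Your proof is correct and follows essentially the same approach as the paper: induct on the number of $\fulldot$'s, and show that the slide and reduction relations allow any diagram to be brought to normally ordered form modulo correction terms with strictly fewer $\fulldot$'s. The paper packages this more compactly by defining a single target diagram $d'$ (obtained by ``freely'' performing all the slides) and asserting $d=\pm d'+d''$ with $d''$ of lower degree, whereas you spell out the procedure in three stages; the content is the same. One small inaccuracy: the correction terms from sliding a $\fulldot$ through a crossing in \cref{prop: AOBC slides} have \emph{one} fewer $\fulldot$, not two fewer, but this does not affect your argument since ``fewer'' is all the induction needs.
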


\begin{proof}
    Let $d$ denote a dotted oriented Brauer-Clifford diagram with bubbles with $\deg(d)\leq k$.
    Let $d'$ denote the diagram obtained from $d$ as follows. First, remove pairs of $\emptydot$'s appearing on the same strand until each strand has at most one $\emptydot$. Next, freely slide each bubble to the right of the picture and redraw them so that they are crossing-free and counterclockwise, without changing the number of $\emptydot$'s and $\fulldot$'s appearing on each bubble. Finally, on every non-bubble strand freely slide all $\emptydot$'s (resp.~$\fulldot$'s) until they lie on an outward-pointing (resp.~inward-pointing) segment. For example, if $d$ is the diagram on the left of \cref{two AOBC diagrams}, then $d'$ is the diagram on the right. It follows from \cref{prop: bubble reducing} that $d'$ is either zero or normally ordered with $\deg(d')=\deg(d)$. Moreover, by  \cref{prop: OBC slides,prop: AOBC slides} along with \cref{OBC relations,reduce two down Cliffords,down black white dot} we have $d=\pm d'+d''$ where $d''$ is a linear combination of dotted oriented Brauer-Clifford diagrams with bubbles each of which having fewer than $\deg(d)$ $\fulldot$'s. The result now follows by inducting on $\deg(d)$. 
\end{proof}

\section{Connection to representations of  Lie superalgebras of type Q}\label{S:Liesuperalgebras}

We next explain how the supercategories $\OBC$ and $\AOBC$ are connected with the representations of the Lie superalgebras of type $Q$.  In what follows we assume that $\sqrt{-1}$ is an element of $\k$.  This is only for convenience.  In particular, the arguments in \cref{SS:PositiveCharacteristic} imply the basis theorems hold in general.  

\subsection{The Lie superalgebra \texorpdfstring{$\fq$}{q}}\label{SS:Liesuperalgebras}

Fix a $\k$-superspace $V= V_{\0} \oplus V_{\1}$ with $\dim_{\k}(V_{\0}) = \dim_{\k}(V_{\1})=n$.   Fix a homogeneous basis $v_{1}, \dotsc , v_{n}, v_{\bar{1}}, \dotsc , v_{\bar{n}}$ with $\p{v_{i}}=\0$ and $\p{v_{\bar{i}}}=\1$ for $i=1, \dotsc , n$.  We write $I$ for the index set $\left\{1, \dotsc , n, \bar{1}, \dotsc , \bar{n} \right\}$ and $I_{0}$ for the index set $ \left\{1, \dotsc ,n \right\}$. 
We adopt the convention that $\bar{\bar{i}}=i$ for all $i \in I$.  Let $c: V\to V$ be the odd linear map given by $c(v_{i})=(-1)^{\p{v_{i}}}\sqrt{-1}v_{\bar{i}}$ for all $i \in I$. 

The vector space of all linear endomorphisms of $V$, $\gl (V)$, is naturally $\Z_{2}$-graded as in \cref{SS:superspaces}.  Furthermore, $\gl (V)$ is a Lie superalgebra under the graded commutator bracket; this, by definition, is given by $[x,y]=xy-(-1)^{\p{x}\p{y}}yx$ for all homogeneous $x,y \in \gl (V)$.  For $i,j \in I$ we write $e_{i,j} \in \gl (V)$ for the linear map $e_{i,j}(v_{k})= \delta_{j,k}v_{i}$. These are the matrix units and they form a homogeneous basis for $\gl(V)$ with $\p{e_{i,j}}=\p{v_{i}}+\p{v_{j}}$.

By definition $\fq (V)$ is the Lie subsuperalgebra of $\gl (V)$ given by 
\[
\fq (V) = \left\{x \in \gl (V) \mid [x,c]=0 \right\}.
\]  Then $\fq (V)$ has a homogenous basis given by $e_{i,j}^{\0}:=e_{i,j}+e_{\ibar,\jbar}$ and $e_{i,j}^{\1 }:=e_{\ibar,j}+e_{i,\jbar}$ for $1\leq i,j\leq n$.  Set $\tilde{e}_{i,j}^{\0}:=e_{i,j}-e_{\ibar,\jbar}$ and $\tilde{e}_{i,j}^{\1}:=e_{\ibar,j}-e_{i,\jbar}$ for $1\leq i,j\leq n$.  These are homogeneous elements of $\gl (V)$ and, together with our basis for $\fq(V)$, provide a homogeneous basis for $\gl (V)$.   Note that $\p{\tilde{e}_{i,j}^{\ep}}=\p{e_{i,j}^{\ep}} = \ep$ for all $1 \leq i,j \leq n$ and $\ep \in \Z_{2}$.

Let $U(\fq )$ denote the universal enveloping superalgebra of the Lie superalgebra $\fq = \fq (V)$.  The superalgebra $U(\fq )$ has a homogeneous PBW basis given by all ordered monomials in the elements $(e_{i,j}^{\ep})^{r}$ (with $1 \leq i, j \leq n$, and $r \geq 1$ if $\ep=\0$ or $r = 0,1$ if $\ep=\1$). Set $U(\fh)$, $U(\fn )$, and $U(\fn^{-})$, respectively,  to be the subsuperalgebras generated by
$\left\{ e_{i,i}^{\ep}  \mid i=1, \dotsc, n, \ep \in \Z_{2} \right\},
\left\{e_{i,j}^{\ep} \mid 1 \leq i < j \leq n, \ep \in \Z_{2} \right\}$, and
$\left\{ e_{i,j}^{\ep} \mid 1 \leq j < i \leq n, \ep \in \Z_{2} \right\}$.
The PBW basis implies that there is a triangular decomposition $U(\fq )= U(\fn^{-}) \otimes U(\fh) \otimes U(\fn )$.

A $U(\fq )$-supermodule is a $\k$-superspace $M=M_{\0}\oplus M_{\1}$ with an action by $U(\fq)$ which respects the $\Z_{2}$-grading in that $U(\fq)_{\ep}M_{\ep'} \subseteq M_{\ep+\ep'}$ for all $\ep, \ep' \in \Z_{2}$.  In particular, the superspace $V$ defined above is naturally a $U(\fq )$-supermodule.  A supermodule homomorphism is a $\k$-linear map $f:M \to N$ which satisfies $f(am) = (-1)^{\p{f}\p{a}}af(m)$ for all homogeneous $a \in U(\fq )$ and $m \in M$.  Note that homomorphisms are not assumed to preserve parity. However, $\Hom$-spaces are naturally $\Z_{2}$-graded as in \cref{SS:superspaces}.  Let $\Uksmod$ denote the supercategory of all $U(\fq )$-supermodules. We will write $\Hom_{U(\fq )}(M,M')=\Hom_\Uksmod(M,M')$. 

The $\k$-superalgebra $U(\fq)$ is a Hopf superalgebra.  In particular, given $U(\fq )$-supermodules $M$ and $M'$, the action of a homogeneous $x\in \fq \subseteq  U(\fq )$ on $M \otimes M'$ is given by $x.(m\otimes m') = (x.m)\otimes m' + (-1)^{\p{x}\p{m}} m \otimes (x.m')$ for homogeneous $m \in M$ and $m'\in M'$.  The unit object is given by viewing $\k$ as a superspace concentrated in parity $\0$ and with trivial $U(\fq)$ action.  The symmetric braiding is given by the graded flip map.  Thus $\Uksmod$ is a symmetric braided monoidal supercategory.  

The antipode $\sigma: U(\fq ) \to U(\fq )$ is given by $\sigma(x) = -x$ for $x \in \fq$.  Using the antipode each finite-dimensional $U(\fq)$-supermodule $M$ has a dual given by $M^{*}=\Hom_{\k}(M, \k)$ with the evaluation and coevaluation maps given by the same formulas as for superspaces (see \cref{SS:superspaces}).  In particular, $V$ is finite-dimensional and so admits a dual, $V^{*}$.  

\subsection{Mixed Schur-Weyl-Sergeev duality}\label{SS: mixed SWS duality}

As \cref{OBC basis theorem,C:OBC End is isomorphic to Sergeev Algebra} will be used to prove the following results, let us point out that this section is not used in the proof of \cref{AOBC basis theorem,OBC basis theorem,C:OBC End is isomorphic to Sergeev Algebra}.  

There is a monoidal superfunctor 
\begin{equation*}
    \Phi:\OBC\to\Uksmod 
\end{equation*}
mapping the objects $\up,\down$ to the superspaces $V, V^*$ respectively, and defined on morphisms by 
\begin{align*}
    \Phi\left(\lcup\right)&:\unit \rightarrow V \otimes V^*, \quad & 1 &\mapsto \sum_{i\in I} v_i\otimes v^*_i,\\
    \Phi\left(\lcap\right)&:V^* \otimes V \rightarrow \unit,
    \quad&
    f\otimes v  &\mapsto f(v),\\
    \Phi\left(\swap\right)&:V \otimes V  \rightarrow V \otimes V,\quad&
    u \otimes v  &\mapsto (-1)^{\p{u}\p{v}} v\otimes u,\\
    \Phi\left(\cldot\right)&:V \rightarrow V,
    \quad&
    v  &\mapsto c(v).
\end{align*} 
Indeed, a direct check confirms that $\Phi$ respects the defining relations of $\OBC$. Given $\ob a\in\wrd$, we write $V^{\ob a}:=\Phi(\ob a)$. For example, $V^{\up\down\up}=V\otimes V^*\otimes V$. 

\begin{theorem} \label{T:MixedSchurWeylDuality} If the characteristic of the ground field $\k$ is zero, then
    $\Phi$ is full. 
\end{theorem}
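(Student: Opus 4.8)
The plan is to reduce the fullness of $\Phi$ to a statement about the action of $U(\fq)$-supermodule endomorphisms on tensor powers of $V$ and $V^*$, and then invoke (a super-analogue of) classical Schur--Weyl--Sergeev duality for $\fq(n)$. First I would use the rigidity of both $\OBC$ and $\Uksmod$: since $\up$ and $\down$ are mutually dual in $\OBC$ (with $\lcup,\lcap$ and their rightward analogues as (co)evaluations) and $\Phi(\up)=V$, $\Phi(\down)=V^*$ are mutually dual in $\Uksmod$, the functor $\Phi$ is compatible with taking duals. Standard ``mate'' or ``cap/cup'' arguments then let me replace an arbitrary morphism space $\Hom_\OBC(\ob a,\ob b)$ by $\Hom_\OBC(\unit,\ob c)$ for a suitable word $\ob c$ obtained by bending all strands to the top, and likewise $\Hom_{U(\fq)}(V^{\ob a},V^{\ob b})\cong\Hom_{U(\fq)}(\unit,V^{\ob c})=(V^{\ob c})^{\fq}$, the space of $\fq$-invariants; and the functor $\Phi$ intertwines these identifications. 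So it suffices to show that $\Phi$ surjects onto the space of $\fq$-invariants in any mixed tensor space $V^{\otimes r}\otimes (V^*)^{\otimes s}$. Bending the $V^*$ factors back down, this is equivalent to surjectivity of
\[
\End_{\OBC}(\up^{r})\;\longrightarrow\;\End_{U(\fq)}(V^{\otimes r})
\]
for all $r$ — here I use that a $\fq$-invariant in $V^{\otimes r}\otimes(V^*)^{\otimes s}\cong \Hom_\k(V^{\otimes s},V^{\otimes r})$ that is $\fq$-linear, plus the diagrammatic cups/caps, can be rewritten in terms of endomorphisms of $V^{\otimes \max(r,s)}$ after composing with coevaluations — so the whole content is concentrated in the endomorphism algebras of pure tensor powers.

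Next, by \cref{C:OBCandSergeev}, $\End_\OBC(\up^r)\cong\Ser_r$, the Sergeev superalgebra, and the composite $\Ser_r\to\End_\OBC(\up^r)\xrightarrow{\Phi}\End_{U(\fq)}(V^{\otimes r})$ is exactly the representation sending $s_i$ to the graded flip on the $i,i{+}1$ factors and $c_i$ to the operator $1^{\otimes(i-1)}\otimes c\otimes 1^{\otimes(r-i)}$ (up to the normalization built into \cref{C:OBCandSergeev}). Thus the theorem reduces to: \emph{in characteristic zero, the natural map $\Ser_r\to\End_{\fq(n)}(V^{\otimes r})$ is surjective.} This is precisely Sergeev's classical double-centralizer theorem for the pair $(\fq(n),\Ser_r)$ acting on $V^{\otimes r}$ (Sergeev \cite{Ser}; see also \cite{BruQ}): over a field of characteristic zero $V^{\otimes r}$ is a completely reducible $\fq(n)\times\Ser_r$-bimodule and each algebra is the full centralizer of the other. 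I would cite this, or alternatively sketch the argument via the $\Z_2$-graded version of the classical invariant-theory fact that $\End_{\fq(n)}(V^{\otimes r})$ is spanned by the ``queer permutations and Clifford elements'' — i.e.\ the image of $\Ser_r$ — which in turn follows from the first fundamental theorem of invariant theory for $\fq(n)$.

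The main obstacle, and the step that needs the most care, is the reduction in the first paragraph: one must check that the cup/cap manipulations on the diagrammatic side really do match the duality isomorphisms on the representation side, i.e.\ that $\Phi$ sends $\lcup$ to the coevaluation $1\mapsto\sum_i v_i\otimes v_i^*$ (which it does by definition) \emph{and} that the induced adjunction isomorphisms $\Hom_\OBC(\ob a\otimes\ob c,\ob b)\cong\Hom_\OBC(\ob a,\ob b\otimes\ob c^*)$ are intertwined by $\Phi$. Because $\OBC$ is a \emph{strict} rigid symmetric monoidal supercategory and $\Phi$ is a monoidal superfunctor respecting the (co)evaluations, this intertwining is formal — but keeping track of the Koszul signs from the super-interchange law (and the $\sqrt{-1}$'s in the definition of $c$) when bending strands requires attention. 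Once that bookkeeping is done, everything funnels into the known Schur--Weyl--Sergeev double centralizer, and fullness follows. Note that this argument genuinely uses $\chr\k=0$: complete reducibility of $V^{\otimes r}$ is what guarantees the centralizer is no larger than the image of $\Ser_r$, and it is exactly where the hypothesis on the characteristic enters.
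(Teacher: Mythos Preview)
Your proposal is correct and follows essentially the same route as the paper: reduce via rigidity/duality to the map $\End_{\OBC}(\up^r)\to\End_{U(\fq)}(V^{\otimes r})$, identify the source with $\Ser_r$ via \cref{C:OBCandSergeev}, and invoke Sergeev's double-centralizer theorem. The paper's write-up is slightly cleaner in that it goes directly via the commutative square $\Hom_\OBC(\ob a,\ob b)\cong\Hom_\OBC(\down^{r_2}\up^{r_1},\up^{r_1'}\down^{r_2'})\cong\End_\OBC(\up^r)$ rather than passing through $\Hom_\OBC(\unit,\ob c)$, and it disposes of the degenerate case $r_1+r_2'\neq r_1'+r_2$ up front using the central element $\sum_i e_{i,i}^{\0}$; your phrase about ``endomorphisms of $V^{\otimes\max(r,s)}$'' is imprecise for exactly this reason, but once you note that the invariant space vanishes unless $r=s$ the argument goes through.
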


\begin{proof}
    We are required to show 
    \begin{equation}\label{OBC to q Homs}
        \Phi:\Hom_{\OBC}(\ob a,\ob b)\to\Hom_{U(\fq)}(V^{\ob a},V^{\ob b})
    \end{equation}
    is surjective for all $\ob a,\ob b\in\wrd$. 
    Suppose $\ob a$ (resp.~$\ob b$) consists of $r_1$ (resp.~$r'_1$) $\up$'s and $r_2$ (resp.~$r'_2$) $\down$'s. Acting by the central element $\sum_{i=1}^{n} e_{i,i}^{\0}$  one sees that $\Hom_{U(\fq)}(V^{\ob a},V^{\ob b})=0$ unless $r_1+r_2'=r_1'+r_2$.   
    
    In the nontrivial case, set $r=r_1+r_2'=r_1'+r_2$ and consider the following:
    \begin{equation}\label{reduce Hom to End}
        \begin{CD}
            \Hom_\OBC(\ob a, \ob b) &@>>>&
            \Hom_\OBC(\down^{r_2}\up^{r_1}, \up^{r'_1}\down^{r'_2}) &@>>>&
            \End_\OBC(\up^r)\\
            @V \Phi VV&&
            @V \Phi VV&&
            @V \Phi VV\\
            \Hom_{U(\fq)}(V^{\ob a},V^{\ob b}) &@>>>&
            \Hom_{U(\fq)}(V^{\down^{r_2}\up^{r_1}}, V^{\up^{r'_1}\down^{r'_2}}) &@>>>&
            \End_{U(\fq)}(V^{\otimes r}).
        \end{CD}
    \end{equation}
    The horizontal maps are all isomorphisms of superspaces. 
    Indeed, the left horizontal maps are given by the symmetric braidings on $\OBC$ and $\Uksmod$. The right horizontal maps are the $\k$-linear isomorphisms that hold in any monoidal supercategory with duals. In particular, the top right horizontal map is the $\k$-linear isomorphism given on diagrams by 
    \[
        \begin{tikzpicture}[baseline = 40pt, scale=0.75, color=\clr]
            \draw (2.75,2) node{?};
            \draw[-,thick] (1.35,2.5) to (4.15,2.5) to (4.15,1.5) to (1.35,1.5) to (1.35, 2.5);
            \draw[->,thick] (1.5,2.5) to (1.5,3.1);
            \draw[->,thick] (2.5,2.5) to (2.5,3.1);
            \draw[-,thick] (3,2.5) to (3,3.1);
            \draw[-,thick] (4,2.5) to (4,3.1);
            \draw[-,thick] (1.5,3) to (1.5,4);
            \draw[-,thick] (2.5,3) to (2.5,4);
            \draw[<-,thick] (3,3) to (3,4);
            \draw[<-,thick] (4,3) to (4,4);
            \draw[->,thick] (1.5,1.5) to (1.5,0.9);
            \draw[->,thick] (2.5,1.5) to (2.5,0.9);
            \draw[-,thick] (3,1.5) to (3,0.9);
            \draw[-,thick] (4,1.5) to (4,0.9);
            \draw[-,thick] (1.5,1) to (1.5,0);
            \draw[-,thick] (2.5,1) to (2.5,0);
            \draw[<-,thick] (3,1) to (3,0);
            \draw[<-,thick] (4,1) to (4,0);
        \end{tikzpicture}
        ~\mapsto~
        \begin{tikzpicture}[baseline = 40pt, scale=0.75, color=\clr]
            \draw (2.75,2) node{?};
            \draw[-,thick] (1.35,2.5) to (4.15,2.5) to (4.15,1.5) to (1.35,1.5) to (1.35, 2.5);
            \draw[->,thick] (1.5,2.5) to (1.5,3.1);
            \draw[->,thick] (2.5,2.5) to (2.5,3.1);
            \draw[->,thick] (4.5,0) to (4.5,2.5) to[out=up,in=right] (4.2,3);
            \draw[-,thick] (4.25,3) to[out=left,in=up] (4,2.5);
            \draw[->,thick] (5.5,0) to (5.5,2.5) to[out=up,in=right] (4.2,3.8);
            \draw[-,thick] (4.25,3.8) to[out=left,in=up] (3,2.5);
            \draw[-,thick] (1.5,3) to (1.5,4);
            \draw[-,thick] (2.5,3) to (2.5,4);
            \draw[-,thick] (3,1.5) to (3,0.9);
            \draw[-,thick] (4,1.5) to (4,0.9);
            \draw[->,thick] (1.5,1.5) to[out=down,in=right] (1.2,1);
            \draw[-,thick] (1.25,1) to[out=left,in=down] (1,1.5) to (1,4);
            \draw[->,thick] (2.5,1.5) to[out=down,in=right] (1.2,0.2);
            \draw[-,thick] (1.25,0.2) to[out=left,in=down] (0,1.5) to (0,4);
            \draw[<-,thick] (3,1) to (3,0);
            \draw[<-,thick] (4,1) to (4,0);
        \end{tikzpicture}
        ~,
    \]
    with inverse mapping 
    \[
        \begin{tikzpicture}[baseline = 40pt, scale=0.75, color=\clr]
            \draw (2.75,2) node{?};
            \draw[-,thick] (1.35,2.5) to (4.15,2.5) to (4.15,1.5) to (1.35,1.5) to (1.35, 2.5);
            \draw[->,thick] (1.5,2.5) to (1.5,3.1);
            \draw[->,thick] (2.5,2.5) to (2.5,3.1);
            \draw[->,thick] (3,2.5) to (3,3.1);
            \draw[->,thick] (4,2.5) to (4,3.1);
            \draw[-,thick] (1.5,3) to (1.5,4);
            \draw[-,thick] (2.5,3) to (2.5,4);
            \draw[-,thick] (3,3) to (3,4);
            \draw[-,thick] (4,3) to (4,4);
            \draw[-,thick] (1.5,1.5) to (1.5,0.9);
            \draw[-,thick] (2.5,1.5) to (2.5,0.9);
            \draw[-,thick] (3,1.5) to (3,0.9);
            \draw[-,thick] (4,1.5) to (4,0.9);
            \draw[<-,thick] (1.5,1) to (1.5,0);
            \draw[<-,thick] (2.5,1) to (2.5,0);
            \draw[<-,thick] (3,1) to (3,0);
            \draw[<-,thick] (4,1) to (4,0);
        \end{tikzpicture}
        ~\mapsto~
        \begin{tikzpicture}[baseline = 40pt, scale=0.75, color=\clr]
            \draw (2.75,2) node{?};
            \draw[-,thick] (1.35,2.5) to (4.15,2.5) to (4.15,1.5) to (1.35,1.5) to (1.35, 2.5);
            \draw[->,thick] (1.5,2.5) to[out=up,in=right] (1.2,3);
            \draw[-,thick] (1.25,3) to[out=left,in=up] (1,2.5) to (1,0);
            \draw[->,thick] (2.5,2.5) to[out=up,in=right] (1.2,3.8);
            \draw[-,thick] (1.25,3.8) to[out=left,in=up] (0,2.5) to (0,0);
            \draw[->,thick] (3,2.5) to (3,3.1);
            \draw[->,thick] (4,2.5) to (4,3.1);
            \draw[-,thick] (3,3) to (3,4);
            \draw[-,thick] (4,3) to (4,4);
            \draw[-,thick] (1.5,1.5) to (1.5,0.9);
            \draw[-,thick] (2.5,1.5) to (2.5,0.9);
            \draw[<-,thick] (1.5,1) to (1.5,0);
            \draw[<-,thick] (2.5,1) to (2.5,0);
            \draw[->,thick] (4.5,4) to (4.5,1.5) to[out=down,in=right] (4.2,1);
            \draw[-,thick] (4.25,1) to[out=left,in=down] (4,1.5);
            \draw[->,thick] (5.5,4) to (5.5,1.5) to[out=down,in=right] (4.2,0.2);
            \draw[-,thick] (4.25,0.2) to[out=left,in=down] (3,1.5);
        \end{tikzpicture}
        ~.
    \] 
    Since the monoidal superfunctor $\Phi$ respects the symmetric braidings and duality, the diagram given in \cref{reduce Hom to End} commutes. Thus, surjectivity of \cref{OBC to q Homs} follows from the surjectivity of the right vertical map in \cref{reduce Hom to End}.  However, composing the right vertical map in \cref{reduce Hom to End} with the isomorphism $\varphi$ from \cref{C:OBCandSergeev} gives the superalgebra map $\operatorname{Ser}_{r} \to \End_{U(\fq)}\left(V^{\otimes r} \right)$ from Schur-Weyl-Sergeev duality. When $\k$ has characteristic zero this is known to be surjective by \cite{Ser} (see also \cite[Section 3.4]{ChengWang}).
\end{proof}

\begin{remark}\label{R:JungKangResult}  When $\k$ has characteristic zero, Schur-Weyl-Sergeev duality also implies the right vertical map in \cref{reduce Hom to End} is injective whenever $r\leq n$. It follows that \cref{OBC to q Homs} is an isomorphism whenever the average length of the words $\ob a$ and $\ob b$ is less than or equal to $n$. In particular, $\Phi$ prescribes an isomorphism of superalgebras $\End_{\OBC}(\ob{a}) \cong \End_{U(\fq )}\left(V^{\ob{a}} \right)$ whenever the length of $\ob{a}$ is less than or equal to $n$.  Coupled with \cref{C:EndOBCisomorphictoWalledBrauerCliffordAlgebra} this recovers \cite[Theorem 3.5]{JK}.
\end{remark}

\begin{remark}\label{R:OBCfullness}  If $\k$ has positive characteristic, then one can replace $U(\fq )$ with the superalgebra of distributions for the supergroup $Q(n)$ and again have the superfunctor $\Phi$.  Moreover, the above argument for the fullness of $\Phi$ goes through modulo the statement that the map  $\operatorname{Ser}_{r} \to \End_{U(\fq)}\left(V^{\otimes r} \right)$ from Schur-Weyl-Sergeev duality is surjective.  For given $r$ it can be deduced from \cite{BK2} that this map is surjective whenever $n \geq r$ or the characteristic of $\k$ is greater than $r$.  As far as the authors are aware, surjectivity is not known in general. It is reasonable to expect it to hold (and, hence, the fullness of $\Phi$) under mild conditions on $\k$ (c.f.\  \cite{BD}).  Similar remarks apply to injectivity.
\end{remark}

\subsection{The monoidal superfunctor \texorpdfstring{$\Psi:\AOBC\to\END(\Uksmod)$}{Psi:AOBC to Uk-smod}}
Let $U(\gl )$ (resp.\ $U(\fq )$) denote the enveloping superalgebra of $\gl (V)$ (resp.\ $\fq (V)$).  Using the bases given in \cref{SS:Liesuperalgebras} we can naturally view $U(\fq)$ as Hopf subsuperalgebra of $U(\gl)$.   

Let $\Omega \in U( \gl) \otimes U(\fq)$ be the Casimir element given by
\begin{equation}\label{Casimir}
    \Omega = \sum_{1\leq i,j\leq n} \tilde{e}_{i,j}^{\0 }\otimes e_{j,i}^{\0 } + \tilde{e}_{i,j}^{\1 }\otimes e_{j,i}^{\1 }.
\end{equation}  Given a $U(\gl)$-supermodule $W$ and a $U(\fq)$-supermodule $M$, we have an even linear map $W \otimes M \to W \otimes M$ given by 
\[
\Omega.(w \otimes m) = \sum_{1\leq i,j\leq n} \tilde{e}_{i,j}^{\0 }.w\otimes e_{j,i}^{\0 }.m + (-1)^{\p{w}} \tilde{e}_{i,j}^{\1 }.w\otimes e_{j,i}^{\1 }.m,
\] for all homogeneous $w\in W$ and $m \in M$.  By restriction $W$ is a $U(\fq)$-supermodule and so $W \otimes M$ is a $U(\fq)$-supermodule via its coproduct.  The action of $\Omega$ defines an even $U(\fq )$-supermodule homomorphism by \cite[Theorem 7.4.1]{HKS} (there it is assumed that $\k = \mathbb{C}$ but the calculations do not depend on this fact).  Alternatively, one can use the odd invariant bilinear form given by the supertrace on $\fq$ to define the so-called odd Casimir element of $U(\fq ) \otimes U(\fq )$ which by standard arguments commutes with the image of the coproduct. In turn, since $\Omega$ equals the product of the odd Casimir with $\sqrt{-1}c\otimes 1$ it necessarily defines a supermodule homomorphism.  See \cite[proof of Lemma 3.1]{Brundan-Davidson} for details. Yet another way to see the action of $\Omega$ defines an even $U(\fq )$-supermodule homomorphism is to use the fact that $e_{i,j}^{\ep}$ and $\tilde{e}_{i,j}^{\ep}$ are dual with respect to a supertrace form, and follow \cite[proof of Lemma 4.1.4]{Serganova-Stroppel-EtAl}. 

\begin{theorem}
There is a monoidal superfunctor $\Psi:\AOBC\to\END(\Uksmod)$  by mapping the objects $\up, \down$ to the endofunctors $V\otimes-, V^*\otimes-$, respectively, and on morphisms by 
\begin{align*}
    \Psi\left(\lcup\right)&:\operatorname{Id} \rightarrow V \otimes V^*\otimes-, 
    \quad&
    m &\mapsto \sum_{i\in I} v_i\otimes v^*_i \otimes m,\\
    \Psi\left(\lcap\right)&:V^* \otimes V\otimes- \rightarrow \operatorname{Id},
    \quad&
    f\otimes v\otimes m  &\mapsto f(v) m,\\
    \Psi\left(\swap\right)&:V \otimes V \otimes- \rightarrow V \otimes V \otimes-,
    \quad&
    u \otimes v \otimes m &\mapsto (-1)^{\p{u}\p{v}} v\otimes u \otimes m,\\
    \Psi\left(\xdot\right)&:V \otimes - \rightarrow V \otimes -,
    \quad&
    v \otimes m &\mapsto \Omega(v \otimes m),\\
    \Psi\left(\cldot\right)&:V \otimes - \rightarrow V \otimes -,
    \quad&
    v \otimes m &\mapsto c(v)\otimes m.
\end{align*} 
\end{theorem}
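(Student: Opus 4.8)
## Proof plan for the existence of the superfunctor $\Psi$

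\textbf{Overall strategy.} The claim is that $\Psi$ is a well-defined monoidal superfunctor, so the entire burden is to check that the assignments on generating objects and generating morphisms respect (i) the parities of the generators, (ii) all the defining relations of $\AOBC$, namely \cref{OB relations 1 (symmetric group),OB relations 2 (zigzags and invertibility),OBC relations,AOBC relations}, and (iii) that each proposed image is actually a supernatural transformation (i.e.\ commutes appropriately with all morphisms in $\Uksmod$, not just the identity). The last point is automatic for the images of $\lcup,\lcap,\swap,\cldot$ because those are built pointwise out of the $\fq$-module structure maps on $V$, $V^*$ and the symmetric braiding, which are themselves natural; the only nontrivial naturality statement is for $\Psi(\xdot)$, and that is precisely the content of the fact cited just before the theorem, that $\Omega$ acts by an even $U(\fq)$-supermodule homomorphism on $W\otimes M$ (with $W=V$). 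So I would begin by recording that $\Psi(\lcup),\Psi(\lcap),\Psi(\swap),\Psi(\cldot)$ are supernatural transformations of the stated parities (the first three even, the last odd) and that $\Psi(\xdot)$ is an even supernatural transformation by the Casimir argument; this also fixes the target endofunctors $V\otimes-$ and $V^*\otimes-$ as genuine objects of $\END(\Uksmod)$.

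\textbf{Checking the relations.} Since $\OBC\subseteq\AOBC$ as the subcategory not involving $\xdot$, and we already have the monoidal superfunctor $\Phi:\OBC\to\Uksmod$ from \cref{T:MixedSchurWeylDuality}, the relations \cref{OB relations 1 (symmetric group),OB relations 2 (zigzags and invertibility),OBC relations} are essentially inherited: one observes that $\Psi$ restricted to the $\OBC$-generators is nothing but ``$\Phi$ tensored with the identity functor'' in the obvious sense — e.g.\ $\Psi(f)_M = \Phi(f)\otimes\id_M$ for each $\OBC$-generator $f$ — and applying a monoidal superfunctor $-\otimes M$ pointwise preserves those relations because $\Phi$ does. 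So the genuinely new work is the two relations in \cref{AOBC relations} together with the two ``mixed'' relations that pair $\xdot$ with $\cldot$. For the left relation of \cref{AOBC relations}, one computes $\Omega(c(v)\otimes m) + c\bigl(\Omega(v\otimes m)\bigr) = 0$; this reduces to the identity $\sum_{i,j}\bigl(\tilde e_{i,j}^{\,\0}c + c\,\tilde e_{i,j}^{\,\0}\bigr)\otimes e_{j,i}^{\,\0} + \bigl(\tilde e_{i,j}^{\,\1}c + c\,\tilde e_{i,j}^{\,\1}\bigr)\otimes e_{j,i}^{\,\1} = 0$ acting on $v\otimes m$, which follows from $[c, e^{\ep}_{j,i}]=0$ (the defining property of $\fq$) after moving $c$ past $\Omega$; care with the sign $(-1)^{\p w}$ in the $\ep=\1$ summand is exactly where the $-1$ in the relation comes from. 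For the right relation of \cref{AOBC relations}, one must verify on $V\otimes V\otimes M$ that
\begin{equation*}
\Psi(\xdot)\Psi(\swap) \;-\; \Psi(\swap)\Psi(\xdot) \;=\; \Psi(\id_{\up\up}) \;-\; \Psi(\cldot\otimes\cldot);
\end{equation*}
concretely this is the statement that $\Omega_{13}P_{12} - P_{12}\Omega_{13} = \id - (c\otimes c\otimes\id)$ on $V\otimes V\otimes M$, where $P_{12}$ is the graded flip on the first two tensor factors. Expanding both sides in the matrix-unit bases and using $\sum_{i,j} \tilde e^{\,\0}_{i,j}\otimes e^{\,\0}_{j,i}(v\otimes v') $ type manipulations, the left side collapses to a ``delta-function'' operator on the first two factors; the $c\otimes c$ term is what records the $\sqrt{-1}$-twist built into the odd part of the Casimir (recall $c(v_i)=(-1)^{\p{v_i}}\sqrt{-1}\,v_{\ibar}$), and verifying the coefficient matches is the crux of the computation.

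\textbf{Expected main obstacle.} The routine parts — relations living entirely in $\OBC$, and the parity bookkeeping — are immediate. The real work, and where sign errors are most dangerous, is the second relation of \cref{AOBC relations}: one is comparing two operators on a triple tensor product, one factor of which is an arbitrary $\fq$-supermodule $M$, and the Koszul signs from the coproduct on $U(\fq)$, from the graded flip, and from the odd element $c$ all interact. I would organize this by first proving the operator identity on $V\otimes V$ alone (taking $M=\k$), where it becomes a finite linear-algebra identity among the known actions of $\Omega$ and $P$ on $V^{\otimes 2}$ — this is a standard ``$\Omega = $ flip up to a correction'' computation for type $Q$, and essentially appears in the Schur–Weyl–Sergeev literature — and then promote it to general $M$ by naturality of all the maps involved, which is legitimate precisely because $\Omega$ acting on $W\otimes M$ is natural in $M$ by the cited result. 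Once these relation checks are complete, well-definedness of $\Psi$ on all of $\AOBC$ follows from the universal property of a monoidal supercategory presented by generators and relations, and monoidality of $\Psi$ is built into the construction (the coherence isomorphisms are the evident ones identifying $(V\otimes-)\circ(V'\otimes-)$ with $(V\otimes V')\otimes-$).
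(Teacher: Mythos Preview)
Your overall strategy---verify that the images of the generators satisfy each defining relation of $\AOBC$---matches the paper's approach. The paper handles the $\OB$ relations by noting that the first three transformations are the coevaluation, evaluation, and braiding in $\svec$, checks \cref{OBC relations} by direct calculation, and then cites \cite[(3.1.4) and (3.1.5)]{HKS} for \cref{AOBC relations} (with a remark about sign conventions), so neither proof writes out the Casimir computation in full. Your route through $\Phi$ for the $\OBC$ relations is a harmless variation.

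However, your sketch of the second relation in \cref{AOBC relations} has two related errors. First, the operator identity you write is not the correct translation: $\Psi(\xdot\otimes\id_\up)_M$ is $\Omega$ acting on $V\otimes(V\otimes M)$, which by the coproduct equals $\Omega_{12}+\Omega_{13}$ (not $\Omega_{13}$ alone), while $\Psi(\id_\up\otimes\xdot)_M$ is $\id_V\otimes\Omega$ on the inner $V\otimes M$, i.e.\ $\Omega_{23}$ (again not $\Omega_{13}$). The identity to be checked on $V\otimes V\otimes M$ is therefore
\[
(\Omega_{12}+\Omega_{13})P_{12}-P_{12}\,\Omega_{23}=\id-(c\otimes c\otimes\id).
\]
Second, and more seriously, your ``promote from $M=\k$ by naturality'' argument does not work as stated: naturality of $\Omega$ in $M$ only says it commutes with $U(\fq)$-maps $M\to M'$, and this does \emph{not} imply that an equality of supernatural transformations can be tested at the single object $M=\k$. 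What actually makes the reduction valid is the separate identity $P_{12}\,\Omega_{23}=\Omega_{13}\,P_{12}$, which follows from the intertwining property of the graded flip with operators on a fixed tensor factor; granting that, the displayed identity collapses to the finite-dimensional check $\Omega_{12}P_{12}=\id-(c\otimes c)$ on $V\otimes V$, which is indeed the $M=\k$ case you had in mind. So your organization can be made to work, but the justification is this intertwining identity, not naturality in $M$.
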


\begin{proof} To show the existence of the superfunctor requires that we verify the defining relations of $\AOBC$.  The first three supernatural transformations are given by maps which are the coevaluation, evaluation, and braiding, respectively, in the supercategory $\svec$.  From this it follows that \cref{OB relations 1 (symmetric group),OB relations 2 (zigzags and invertibility)} are satisfied.  A direct calculation verifies that \cref{OBC relations} is also satisfied.  The relations in \cref{AOBC relations} follow from the verification of \cite[(3.1.4) and (3.1.5)]{HKS}, keeping in mind the authors chose to assume that Clifford elements square to minus one and to tensor by $V$ on the right.  Our different choices impact the signs which appear in formulas but otherwise have no effect.  
\end{proof}

\subsection{Bubbles and central elements of \texorpdfstring{$U(\fq )$}{U(q)}} \label{SS:bubblesandcenter}

As is well known (e.g.\ \cite[Proposition 46]{CM}), the supernatural transformations from the identity superfunctor to itself identify as a superalgebra with $Z(\fq )$, the supercenter of $U(\fq )$.  In particular, using the notation for supernatural transformations set in \cref{SS: monoidal supercats}, $\Psi(\Delta_k)_{U(\fq)}:U(\fq)\to U(\fq)$ is a supermodule homomorphism and $z_k:=\Psi(\Delta_k)_{U(\fq)}(1)$ lies in $Z(\fq)$. In this section we compute these central elements.

To do so requires further notation.  For any $k \geq 1$ and $\bm{\varepsilon} =(\ep_{k}, \dotsc , \ep_{1}) \in \Z_{2}^{k}$, define $\p{\bm{\varepsilon}}= \ep_{k}+\dotsb +\ep_{1}$.  Furthermore, define $\sgn (\bm{\varepsilon})=\pm 1$ recursively by
\[
    \sgn (\ep_{k}, \dotsc , \ep_{1}) =
    \begin{cases} 
        (-1)^{(\ep_{k}+\bar{1})\p{(\ep_{k-1}, \dotsc , \ep_{1})}}\sgn (\ep_{k-1}, \dotsc , \ep_{1}), 
        & \text{ if $k>1$}; \\
        1, & \text{ if $k=1$}.
    \end{cases} 
\]

\begin{theorem}\label{T:centralelements} 
    Let $k$ be a positive odd integer, then the central element of $U(\fq )$ determined by the even supernatural transformation $\Psi (\Delta_{k})$ is
    \[
    z_{k}=2\sum_{\substack{(i_{k}, \dotsc , i_{1}) \in I_{0}^{k}, \\ \bm{\varepsilon}=(\ep_{k}, \dotsc, \ep_{1}) \in \Z_{2}^{k},\\ \text{ with $\p{\bm{\varepsilon}}=\0$}}} 
    \sgn (\bm{\ep})e_{i_{k-1}, i_{k}}^{\ep_{k}}e_{i_{k-2},i_{k-1}}^{\ep_{k-1}}\dotsb e_{i_{2},i_{3}}^{\ep_{3}}e_{i_{1},i_{2}}^{\ep_{2}}e_{i_{k},i_{1}}^{\ep_{1}}.
    \]
\end{theorem}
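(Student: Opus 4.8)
The plan is to unravel the bubble $\Delta_k$ in $\AOBC$ as a composite of $k$ copies of $\xdot$ strung around a cup-cap pair, push this through the monoidal superfunctor $\Psi$, and then carefully track signs coming from the super-interchange law. Concretely, $\Delta_k$ is the morphism $\unit\to\unit$ obtained by closing up $\xdot^{\,k}:\up\to\up$ using $\lcup$ and $\lcap$ (with a braiding to route strands into a counterclockwise circle). Applying $\Psi$ turns $\lcup$ into $m\mapsto\sum_i v_i\otimes v_i^*\otimes m$, turns $\lcap$ into $f\otimes v\otimes m\mapsto f(v)m$, and turns each $\xdot$ into the action of the Casimir $\Omega=\sum_{1\le a,b\le n}(\tilde e^{\0}_{a,b}\otimes e^{\0}_{b,a}+\tilde e^{\1}_{a,b}\otimes e^{\1}_{b,a})$ on $V\otimes-$. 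So $z_k=\Psi(\Delta_k)_{U(\fq)}(1)$ is, up to signs, $\sum_{i\in I}(\text{coefficient of }v_i\text{ in }\Omega^{(k)}(v_i\otimes 1))$, where $\Omega^{(k)}$ denotes $k$-fold application of $\Omega$ to the $V$-slot while accumulating operators in $U(\fq)$.

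First I would set up the bookkeeping: write $\Omega=\sum_{\ep\in\Z_2}\sum_{a,b}\tilde e^{\ep}_{a,b}\otimes e^{\ep}_{b,a}$, so that applying $\Omega$ $k$ times to $v_{i_0}\otimes 1$ produces $\sum \tilde e^{\ep_k}_{a_k,b_k}\cdots\tilde e^{\ep_1}_{a_1,b_1}v_{i_0}\otimes e^{\ep_k}_{b_k,a_k}\cdots e^{\ep_1}_{b_1,a_1}$ with the appropriate Koszul signs from moving the odd factors $e^{\1}$ past the odd vectors/operators — these signs are exactly what the recursively-defined $\sgn(\bm\ep)$ records. Then the trace/closure forces $\tilde e^{\ep_k}_{a_k,b_k}\cdots\tilde e^{\ep_1}_{a_1,b_1}v_{i_0}=(\text{scalar})v_{i_0}$, i.e.\ one extracts the diagonal matrix coefficient; using $\tilde e^{\ep}_{a,b}v_j=\delta$-terms one telescopes $b_1=i_0$, $a_1=b_2$, \ldots, $a_k=i_0$ over the index set $I_0$, and the composite on the $U(\fq)$-side becomes $e^{\ep_k}_{i_{k-1},i_k}e^{\ep_{k-1}}_{i_{k-2},i_{k-1}}\cdots e^{\ep_1}_{i_k,i_1}$ after relabelling. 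The factor $2$ arises from summing the two choices that collapse to the same monomial — equivalently from $\dim V_{\0}=\dim V_{\1}=n$ — and the restriction $\p{\bm\ep}=\0$ comes from the requirement that the odd part of the composite endofunctor act with even total parity (the $\tilde e$'s with $\p{\bm\ep}=\1$ contribute a term whose $V$-coefficient vanishes upon closing, using the Clifford relation $[x,c]=0$ for $x\in\fq$ which distinguishes $e^{\ep}_{a,b}$ from $\tilde e^{\ep}_{a,b}$). I would also need to double-check that the replacement of $\tilde e$ by $e$ in the $U(\fq)$-slot is legitimate after closure: the closed-up circle pairs a $\tilde e$ from $\Omega$ acting on $V$ with an $e$ acting on $M=U(\fq)$, and the trace over $V$ converts $\tilde e^{\ep}_{a,b}\mapsto e^{\ep}_{b,a}$-type identifications, so the surviving $U(\fq)$-monomial is built from the $e^{\ep}_{b,a}$ factors only.

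The main obstacle is the sign computation: correctly accounting for every Koszul sign as odd operators $e^{\1}$ and odd basis vectors $v_{\bar i}$ are permuted during the $k$-fold application of $\Omega$ and during the final closure with $\lcup,\lcap$ and the braidings, and verifying these match the recursion defining $\sgn(\ep_k,\dots,\ep_1)=(-1)^{(\ep_k+\1)\p{(\ep_{k-1},\dots,\ep_1)}}\sgn(\ep_{k-1},\dots,\ep_1)$. I would organize this by induction on $k$: assume the formula for $\Psi(\Delta_{k-2})$-type expressions (or rather for the partially-closed string with $k$ dots) and peel off one $\Omega$, checking that the new sign contributed is precisely $(-1)^{(\ep_k+\1)\p{(\ep_{k-1},\dots,\ep_1)}}$. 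The factor $(\ep_k+\1)$ rather than $\ep_k$ is the subtle point and should trace back to the fact that $\tilde e^{\ep_k}_{a,b}$ carries parity $\ep_k$ but the vector $v_{i_0}$ it ultimately maps to carries a parity offset, or equivalently to an extra sign in the coevaluation $\sum_i v_i\otimes v_i^*$ when $i$ is barred; pinning down exactly where this offset enters is where I expect to spend most of the effort. A secondary check is that no clockwise bubbles or $\emptydot$-decorated bubbles appear — but \cref{prop: bubble reducing} and \cref{R:clockwise bubbles} already guarantee $\Delta_k$ with $k$ odd is the only survivor, so on the category side everything is consistent, and it remains only to identify the image under $\Psi$.
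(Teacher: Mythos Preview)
Your plan is the same computation the paper carries out---unwind $\Delta_k$ as $(\ev_V\otimes 1)\circ(1\otimes\Omega^k)\circ(\gamma_{V,V^*}\otimes 1)\circ(\coev_V\otimes 1)$, then induct on $k$ to track the $\sgn(\bm{\varepsilon})$ recursion---but the paper handles the step you flag as the main obstacle with a cleaner device. Rather than summing over all $i\in I$ and arguing separately that $\p{\bm{\varepsilon}}=\0$ terms double while $\p{\bm{\varepsilon}}=\1$ terms cancel, the paper rewrites the output of coevaluation-plus-braiding as
\[
\sum_{i\in I}(-1)^{\p{v_i}}v_i^*\otimes v_i\otimes 1=(1-c_{2,1})\Big(\sum_{i_0\in I_0}v_{i_0}^*\otimes v_{i_0}\otimes 1\Big),
\]
where $c_{2,1}:=\Psi(\cldotdown\cldot)_{U(\fq)}$. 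The two relations $(1\otimes\Omega)\circ c_{2,1}=-c_{2,1}\circ(1\otimes\Omega)$ and $(\ev_V\otimes 1)\circ c_{2,1}=\ev_V\otimes 1$ let one pull $c_{2,1}$ past the $k$ Casimirs (picking up $(-1)^k$) and then absorb it into the evaluation, so the whole expression becomes $(1-(-1)^k)\cdot(\text{half-sum over }I_0)$: this is $0$ for $k$ even and $2\times(\text{half-sum})$ for $k$ odd, giving the factor $2$ and the restriction $\p{\bm{\varepsilon}}=\0$ in one stroke. The induction then reduces to the clean statement $(1\otimes\Omega)^k(v_{i_0}^*\otimes v_{i_0}\otimes 1)=\sum\sgn(\bm{\varepsilon})\,v_{i_0}^*\otimes v_{i_k}^{\p{\bm{\varepsilon}}}\otimes e_{\bd{i}}^{\bm{\varepsilon}}$ for fixed $i_0\in I_0$, which is precisely the recursion you identified; the extra $(-1)^{(\ep_k+\1)\p{(\ep_{k-1},\dots,\ep_1)}}$ you were hunting for falls out of the Koszul sign when $e^{\ep_k}_{j,i}$ passes over the current $V$-vector $v_{i_{k-1}}^{\p{(\ep_{k-1},\dots,\ep_1)}}$.

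One small confusion to clear up: there is no ``replacement of $\tilde e$ by $e$.'' The $\tilde e^{\ep}_{a,b}$ only ever act on $V$, where they telescope (e.g.\ $\tilde e^{\0}_{a,b}v_c=\delta_{b,c}v_a$, $\tilde e^{\1}_{a,b}v_c=\delta_{b,c}v_{\bar a}$) and so disappear after evaluation; the $e^{\ep}_{b,a}$ from the other tensor leg of $\Omega$ are what accumulate in $U(\fq)$ and survive as the monomial in $z_k$.
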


\begin{proof} 
    We compute $\Psi (\Delta_{k})_{U(\fq )}(1) = (\ev_{V} \otimes 1) \circ (1 \otimes \Omega^{k}) \circ (\gamma_{V, V^{*}} \otimes 1) \circ (\coev_{V} \otimes 1)(1)$.  First observe that 
    \begin{equation}\label{E:centercalc1}
         (\gamma_{V, V^{*}}\otimes 1) \circ (\coev_{V}\otimes 1)(1)=
         \sum_{i \in I} (-1)^{\p{v_{i}}} v_{i}^{*} \otimes v_{i} \otimes 1 = 
         \left(1-c_{2,1} \right)\left(\sum_{i_0 \in I_{0}} v_{i_0}^{*} \otimes v_{i_0} \otimes 1 \right),
    \end{equation}
    where $c_{2,1}:=\Psi \left(\cldotdown\cldot \right)_{U(\fq )}\in\End_{U(\fq)}(V^*\otimes V\otimes U(\fq))$.

    To continue it is helpful introduce some notation to simplify formulas. Given $\bd{i}=(i_{k}, \dotsc , i_{0}) \in I_{0}^{k+1}$ and $\bm{\varepsilon} = (\ep_{k}, \dotsc , \ep_{1}) \in \Z_{2}^{k}$, for short let $e_{\bd{i}}^{\bm{\ep}} \in U(\fq )$ be given by 
    \[
        e_{\bd{i}}^{\bm{\varepsilon}}= e_{i_{k-1}, i_{k}}^{\ep_{k}}e_{i_{k-2}, i_{k-1}}^{\ep_{k-1}}\dotsb e_{i_{1}, i_{2}}^{\ep_{2}} e_{i_{0}, i_{1}}^{\ep_{1}}.
    \]  
    Given $\bm{\varepsilon} \in \Z_{2}^{k}$ and $i \in I_{0}$ we write 
    \[
        v_{i}^{\p{\bm{\varepsilon}}} = 
        \begin{cases} 
            v_{i}, & \text{ if $\p{\bm{\varepsilon}}=\0$};\\
            v_{\bar{i}}, & \text{ if $\p{\bm{\varepsilon}}=\1$}.
        \end{cases}
    \]

    A straightforward induction on $k \geq 1$ proves that for each fixed $i_0\in I_{0}$
    \[
    (1 \otimes \Omega)^{k}. v_{i_0}^{*}\otimes v_{i_0} \otimes 1 = \sum \sgn (\bm{\varepsilon})v_{i_0}^{*} \otimes v_{i_{k}}^{\p{\bm{\varepsilon}}} \otimes e_{\bd{i}}^{\bm{\varepsilon}},
    \] 
    where the sum is over all $\bm{\varepsilon} = (\ep_{k}, \dotsc , \ep_{1}) \in \Z_{2}^{k}$ and $\bd{i}=(i_{k}, \dotsc , i_{1}, i_{0}) \in I_{0}^{k+1}$.  
    Combining this formula with \cref{E:centercalc1} and using $(1 \otimes \Omega)\circ c_{2,1}=-c_{2,1}\circ (1\otimes \Omega)$ and $(\ev_{V} \otimes 1)\circ c_{2,1}=\ev_{V} \otimes 1$ yields
    \begin{align*}
        \Psi(\Delta_{k})_{U(\fq )}(1) &=(\ev_{V} \otimes 1) \circ (1-(-1)^kc_{2,1}) \left( \sum\sgn(\bm{\varepsilon}) v_{i_0}^{*} \otimes v_{i_{k}}^{\p{\bm{\varepsilon}}} \otimes e_{\bd{i}}^{\bm{\varepsilon}} \right) \\
        &=\begin{cases} 
            0, &\text{ if $k$ is even}; \\
            2\sum \sgn (\bm{\varepsilon}) e_{\bd{i}}^{\bm{\varepsilon}}, & \text{ if $k$ is odd}.
        \end{cases}
    \end{align*} The upper sum is over all $\bd{i}=(i_{k}, \dotsc , i_{1}, i_{0}) \in I_{0}^{k+1}$, and $\bm{\varepsilon} = (\ep_{k}, \dotsc , \ep_{1}) \in \Z_{2}^{k}$. The lower sum is over all $\bd{i}\in I_{0}^{k+1}$ with $i_0=i_k$ and all $\bm{\varepsilon} \in \Z_{2}^{k}$ with $\p{\bm{\varepsilon}}=\0$.  This proves the stated result.
\end{proof}

For each integer $t \geq 0$, Sergeev defined an explicit element $S_{t} \in Z(\fq )$.  As we do not need Sergeev's elements we do not reproduce their definition here. The interested reader can find it in  \cite{SergeevCenter} or \cite[Section 8]{BK1}.   For completeness' sake we explain how Sergeev's elements relate to those described above.   Recall that $\sigma: U(\fq ) \to U(\fq )$ is the antipode of $U(\fq )$  (see \cref{SS:Liesuperalgebras}). 

\begin{proposition}\label{P:SergeevCenter}  
    For each integer $t \geq 1$, 
    \[
        z_{2t-1} =-2\sigma \left( S_{t}\right).
    \]
\end{proposition}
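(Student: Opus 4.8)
The plan is to compare the two explicit descriptions of central elements: the formula for $z_{2t-1}$ established in \cref{T:centralelements}, and the known formula for Sergeev's element $S_t$ (as recorded in \cite{SergeevCenter} or \cite[Section 8]{BK1}). First I would recall from \cite[Section 8]{BK1} the explicit expression for $S_t$ as a sum over sequences of matrix units $e_{i,j}^{\ep}$, noting in particular which signs and parity constraints appear; the key feature is that $S_t$ is, up to a scalar, a sum of "necklace" monomials $e_{i_0,i_1}^{\ep_1}e_{i_1,i_2}^{\ep_2}\dotsb e_{i_{k-1},i_k}^{\ep_k}$ with $i_0=i_k$, $\p{\bm\ep}=\0$, and a recursively-defined sign, where $k=2t-1$. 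This is structurally the same shape as the formula for $z_{2t-1}$ in \cref{T:centralelements}, but written with the factors composed in the opposite order (our $e_{\bd i}^{\bm\ep}$ reads the cycle one way, Sergeev's reads it the other).

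The main step is then to apply the antipode $\sigma$ to Sergeev's formula and check it matches $-\tfrac12 z_{2t-1}$. Recall $\sigma(x)=-x$ for $x\in\fq$, and $\sigma$ is an \emph{anti}-homomorphism with the usual sign rule: $\sigma(xy)=(-1)^{\p x\p y}\sigma(y)\sigma(x)$ for homogeneous $x,y$. Thus applying $\sigma$ to a monomial $e_{i_0,i_1}^{\ep_1}\dotsb e_{i_{k-1},i_k}^{\ep_k}$ reverses the order of the factors (turning it into a monomial of the same form as $e_{\bd i}^{\bm\ep}$), introduces a factor $(-1)^k$ from the $k$ sign flips $\sigma(e_{i,j}^{\ep})=-e_{i,j}^{\ep}$, and produces a Koszul sign $\kappa(\bm\ep)$ from reordering the homogeneous factors. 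Since $k=2t-1$ is odd, $(-1)^k=-1$, which supplies the overall minus sign in $z_{2t-1}=-2\sigma(S_t)$. What remains is a bookkeeping identity: the reordering Koszul sign $\kappa(\bm\ep)$, combined with whatever sign Sergeev's $S_t$ carries on each necklace, must equal $\sgn(\bm\ep)$ as defined in \cref{SS:bubblesandcenter}, and the scalar prefactors must combine to give the stated $-2$. I would verify the sign identity by induction on $k$, matching the recursion $\sgn(\ep_k,\dotsc,\ep_1)=(-1)^{(\ep_k+\1)\p{(\ep_{k-1},\dotsc,\ep_1)}}\sgn(\ep_{k-1},\dotsc,\ep_1)$ against the step of moving one more factor past the rest during the $\sigma$-reversal; the parity constraint $\p{\bm\ep}=\0$ is what makes the two recursions line up.

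The hard part will be reconciling the two sign conventions precisely — Sergeev/Brundan--Kleshchev work with a particular normalization of the basis of $\fq$ and a particular form of the Casimir/bilinear form, whereas our $\tilde e_{i,j}^{\ep}$, $e_{i,j}^{\ep}$ and the resulting $\sgn(\bm\ep)$ come out of the diagrammatic computation in \cref{T:centralelements}. I would isolate this as a lemma: for each odd $k$ and each sequence $\bm\ep\in\Z_2^k$ with $\p{\bm\ep}=\0$, $\sigma$ sends the $S_t$-monomial indexed by $(\bd i,\bm\ep)$ to $-\tfrac12$ times the $z_{2t-1}$-monomial indexed by the same data, where I match up indices $\bd i$ after the order-reversal (so $i_0=i_k$ corresponds to the "closing" of the necklace in both). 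Granting that lemma, summing over all $(\bd i,\bm\ep)$ immediately gives $z_{2t-1}=-2\sigma(S_t)$. I would also double-check the degenerate case $t=1$ (so $k=1$, $z_1=2\sum_{i\in I_0}e_{i,i}^{\0}$, which should be $-2\sigma$ of Sergeev's first central element) as a sanity check on the normalization, since there the sign recursion is trivial ($\sgn=1$) and only the scalar and the single sign flip from $\sigma$ are in play.
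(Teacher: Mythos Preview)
Your proposal is correct and follows essentially the same approach as the paper: expand $S_t$ as a signed sum of necklace monomials, observe these are exactly the monomials appearing in $z_{2t-1}$ but in reverse order, apply $\sigma$ (tracking the $(-1)^k$ from the $k$ sign flips and the Koszul reordering sign), and then verify monomial-by-monomial that the signs match. The only procedural difference is that the paper derives a closed formula
\[
\sgn(\ep_k,\dotsc,\ep_1)=(-1)^{\ep_{k-1}+\ep_{k-3}+\dotsb+\ep_p+\sum_{1\le r<s\le k}\ep_r\ep_s}
\]
(with $p=1$ or $2$ according to the parity of $k$) and compares it directly against the closed form $\sigma(x_1\dotsb x_k)=(-1)^{k+\sum_{r<s}\p{x_r}\p{x_s}}x_k\dotsb x_1$, rather than arguing by induction as you propose; either route handles the sign bookkeeping.
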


\begin{proof}  
    By expanding the recursive formula for $S_{t}$ we see that it is a sum with coefficients of $\pm 1$ over precisely the same set of monomials as given by the formula for $z_{2t-1}$ in \cref{T:centralelements}, except that they are in reverse order.  That is, $\sigma (S_{t})$  and $z_{2t-1}$ are sums over precisely the same set of monomials.  All that remains is to verify that the sign in front of each monomial agrees.  This is a straightforward check, keeping in mind that since $\sigma$ is a superalgebra anti-involution, if $x_{1}, \dotsc , x_{k} \in \fq$, then $\sigma(x_{1}\dotsb x_{k}) = (-1)^{\delta}x_{k}\dotsb x_{1}$, where $\delta = k + \sum_{1 \leq r < s \leq  k}\p{x_{r}}\p{x_{s}}$.  It is also helpful in comparing signs to verify that a closed formula for $\sgn (\ep_{k}, \dotsc , \ep_{1})$ is given by 
    \[
    \sgn (\ep_{k}, \dotsc , \ep_{1}) = (-1)^{\ep_{k-1}+\ep_{k-3}+\dotsb +\ep_{p} + \sum_{1 \leq r < s \leq k} \ep_{r}\ep_{s}},
    \] where $p =1$ if $k$ is even and $p=2$ if $k$ is odd.
\end{proof}

\begin{remark}\label{R:center}  
    Assume $\k$ has characteristic zero. By our basis theorem for $\AOBC$ the set $\left\{\Delta_1,\Delta_3,\Delta_5,\ldots \right\}$ is algebraically independent (see also \cref{R:algebraicallyindependent}). Moreover, Sergeev's elements are known to generate $Z(\fq )$ for every $n\geq 1$ by \cite{SergeevCenter}.  Therefore $\Psi$ defines a surjective homomorphism $\End_{\AOBC}(\unit ) \to \End_{\EndUksmod}(\Id )$.
\end{remark}

\section{The generic Verma supermodule} \label{S:GenericVerma}
We now introduce the generic Verma supermodule.  This supermodule will play a key role in proving the basis theorem for $\AOBC$.

\subsection{The polynomial ring}\label{SS:polynomialring} Let $U(\fh_{\0})$ denote the subsuperalgebra of $U(\fh)$ generated by  $\left\{ h_{i}  \mid i=1, \dotsc, n \right\}$, where for brevity we set $h_{i}=e_{i,i}^{\0}$ for $i=1,\dotsc ,n$.   We put the usual $\Z$-grading on $U(\fh_{\0})$  by putting each $h_{i}$ in degree $1$.  Since $\fh_{\0}$ is an abelian Lie superalgebra concentrated in parity zero, $U(\fh_{\0})$ is nothing more than a polynomial ring on generators $h_{n}, \dotsc , h_{1}$.  We refine the $\Z$-grading on $U(\fh_{\0})$ by also putting a graded lexicographic order on the monomials with the convention that $h_n>\cdots>h_1$; that is, $h_{n}^{r_{n}}\dotsb h_{1}^{r_{1}} < h_{n}^{s_{n}}\dotsb h_{1}^{s_{1}}$ if and only if either $\sum_{i} r_{i} < \sum_{i} s_{i}$ or $\sum_{i} r_{i} = \sum_{i} s_{i}$ and if $t=1, \dotsc , n$ is maximal with $r_{t}\neq s_{t}$, then $r_{t}<s_{t}$.

\subsection{The generic Verma supermodule} Let $U(\fb )$ denote the subsuperalgebra of $U(\fq)$ generated by $U(\fh)$ and $U(\fn )$.  By the PBW theorem we have $U(\fb ) = U(\fh) \otimes U(\fn )$ as superspaces.   
Since the span of the monomials of positive degree, $U(\fn )_{+}$, is an ideal of $U(\fb )$ with $U(\fb )/U(\fn )_{+} \cong U(\fh)$, we can and will view $U(\fh )$ as a $U(\fb )$-supermodule by inflation.
Define the \emph{generic Verma supermodule} to be the $U(\fq)$-supermodule 
\[
M=U(\fq )\otimes_{U(\fb)} U(\fh )  \cong U(\fq ) \otimes_{U(\fb )} \left(U(\fh)\otimes_{U(\fh_{\0})} U(\fh_{\0}) \right).
\] We will write $\hw:=1\otimes 1\otimes 1\in M$ for the ``highest weight vector''. While we do not need this fact, note that if $W$ is a $U(\fq )$-supermodule and $w \in W$ is a homogeneous weight vector for $U(\fh_{\0})$ such that $e_{i,j}^{\ep}w =0$ for all $1 \leq i < j \leq n$ and all $\ep \in \Z_{2}$, then there is a unique $U(\fq )$-supermodule homomorphism $M \to W$ such that $\hw \mapsto w$.
  
For brevity, set $N=n(n-1)/2$ and let $\left\{f_{1}, \dotsc , f_{N} \right\}$ (resp.~$\left\{\bar{f}_{1}, \dotsc , \bar{f}_{N} \right\}$) be any basis for $\fn^{-}_{\0}$ (resp.~$\fn^{-}_{\1}$) such that 
$\left\{e_{i,j}^{\0} \mid 1 \leq j < i \leq n\right\} = \left\{f_{1}, \dotsc , f_{N} \right\}$ and  $\left\{e_{i,j}^{\1} \mid 1 \leq j < i \leq n\right\} = \left\{\bar{f}_{1}, \dotsc , \bar{f}_{N} \right\}$.  
In particular, a PBW basis for $U(\fn^{-})$ is given by the monomials $f_{1}^{a_{1}}\dotsb f_{N}^{a_{N}}\bar{f}_{1}^{b_{1}}\dotsb \bar{f}_{N}^{b_{N}}$, where $a_{1}, \dotsc , a_{N}\in\Z_{\geq 0}$ and $b_{1}, \dotsc , b_{N} \in \{0,1 \}$.
By the PBW theorem $M$ is a free right $U(\fh_{\0})$-supermodule on basis 
\begin{equation}\label{M basis}
f_1^{a_1}\cdots f_N^{a_N}\bar{f}_1^{b_1}\cdots \bar{f}_N^{b_N}\otimes \bar{h}_1^{c_1}\cdots \bar{h}_n^{c_n}\otimes 1
\end{equation}
 where each $a_k\in\Z_{\geq0}$ and $b_k,c_k\in\{0,1\}$. In particular, the superspace $M$ inherits a $\Z$-grading (hence a $\Z$-filtration) from the right action of $U(\fh_{\0})$ where for $t\in \Z$, $M_{t}$ is the span of $f_1^{a_1}\cdots f_N^{a_N}\bar{f}_1^{b_1}\cdots \bar{f}_N^{b_N}\otimes \bar{h}_1^{c_1}\cdots \bar{h}_n^{c_n}\otimes U (\fh_{\0})_{t}$ for all $a_k\in\Z_{\geq0}$ and $b_k,c_k\in\{0,1\}$. More generally, for any $\ob a\in\wrd$ the superspace $V^{\ob a}\otimes M$ is a free right $U(\fh_{\0}) $-supermodule and is similarly a graded (hence filtered) superspace. 

\begin{lemma}\label{eM filtered degree 1}
    Suppose $1\leq i,j\leq n$ and $\ep\in \Z_{2}$.  Left multiplication by $e_{i,j}^\ep$ prescribes a map $M\to M$ that is 
    \begin{enumerate}
        \item[(1)] homogeneous degree 0 when $i>j$ and, 
        \item[(2)] filtered degree 1 when $i\leq j$.
    \end{enumerate} 
\end{lemma}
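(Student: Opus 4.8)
The plan is to work with the PBW basis of $M$ from \cref{M basis} and reduce everything to a statement about the degree-zero component $M_0$. Left multiplication by $e_{i,j}^{\ep}$ commutes with the right action of $U(\fh_{\0})$; right multiplication by a degree-$r$ element of $U(\fh_{\0})$ carries $M_s$ into $M_{s+r}$; and $M_{\leq t}=M_0\cdot U(\fh_{\0})_{\leq t}$ with $M_s=0$ for $s<0$. Hence it suffices to prove: for every PBW basis element $b$ of $M_0$ one has $e_{i,j}^{\ep}\cdot b\in M_0$ if $i>j$, and $e_{i,j}^{\ep}\cdot b\in M_{\leq 1}$ if $i\leq j$. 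I will write such a $b$ as $b=u\,w\,\hw$, where $u\in U(\fn^{-})$ is the PBW monomial in the $f$'s and $\bar f$'s and $w=\bar h_1^{c_1}\cdots\bar h_n^{c_n}\in U(\fh)$ is the square-free monomial in the odd Cartan elements. Part~(1) is then immediate: if $i>j$ then $e_{i,j}^{\ep}\in\fn^{-}$, so $e_{i,j}^{\ep}u\in U(\fn^{-})$ because $\fn^{-}$ is a subalgebra, and therefore $e_{i,j}^{\ep}\cdot b=(e_{i,j}^{\ep}u)\,w\,\hw$ is a $\k$-linear combination of PBW basis elements of $M_0$ — no factor from $U(\fh_{\0})$ is ever produced. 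In particular, by composing generators, left multiplication by any element of $U(\fn^{-})$ is homogeneous of degree $0$.

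For part~(2) I would treat the diagonal case $i=j$ first. If $e:=e_{i,i}^{\ep}\in\fh$ then $[\fh,\fn^{-}]\subseteq\fn^{-}$, so dragging $e$ rightward through $u$ only spawns extra elements of $U(\fn^{-})$; thus $e\cdot b=u\cdot\bigl(e\cdot(w\,\hw)\bigr)+u^{\flat}\cdot(w\,\hw)$ with $u^{\flat}\in U(\fn^{-})$, and by part~(1) it is enough to check $e\cdot(w\,\hw)\in M_{\leq 1}$. When $e=h_m=e_{m,m}^{\0}$ this equals $w\cdot(h_m\,\hw)\in M_1$, since $h_m$ commutes with every $\bar h_{m'}$; when $e=\bar h_m=e_{m,m}^{\1}$ it lies in $M_0$ if $c_m=0$ and, using $\bar h_m^{2}=e_{m,m}^{\0}$, in $M_1$ if $c_m=1$. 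This disposes of part~(2) for $e\in\fh$ with no induction.

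For $e=e_{i,j}^{\ep}\in\fn$ (i.e.\ $i<j$) I would induct on the length of the $U(\fn^{-})$-part $u$ of $b$. The base case is the claim that $\fn$ annihilates $w\,\hw$ for every $w$, which follows by a secondary induction on the length of $w$ from the two facts that $\fn$ kills $\hw$ and $[\fn,\fh_{\1}]\subseteq\fn$. For the inductive step write $u=f\cdot u_1$ with $f$ a negative root vector, so that $e\cdot b=\pm f\cdot(e\cdot b_1)+[e,f]\cdot b_1$, where $b_1=u_1\,w\,\hw$ has a strictly shorter $U(\fn^{-})$-part. The first summand lies in $M_{\leq 1}$ by the inductive hypothesis together with part~(1). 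For the second, $[e,f]$ is homogeneous for the root-space decomposition of $\fq$, hence lies entirely in $\fn^{-}$, or in $\fn$, or in $\fh$; in these three cases $[e,f]\cdot b_1$ lands in $M_{\leq 1}$ by part~(1), by the inductive hypothesis, and by the diagonal case already established, respectively. This closes the induction.

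The one point that genuinely needs care — and that I would flag explicitly in the write-up — is the filtration bookkeeping in the $\fn$-case. A naive count suggests that pushing $e$ (morally of ``degree $1$'') past the odd Cartan factors $\bar h_m$ (also morally of ``degree $1$'') could cost two units of filtration rather than one. The resolution, which is exactly what the argument above exploits, is twofold: the leading term $u\,w\,e\,\hw$ vanishes because $e\in\fn$ annihilates $\hw$, and $e$ can be traded for an element of $\fh$ through only a \emph{single} commutator $[e,f]$ with a lower-triangular generator, since every subsequent commutator with such a generator remains inside $\fn^{-}$. That is precisely why the induction terminates at $M_{\leq 1}$ and not $M_{\leq 2}$.
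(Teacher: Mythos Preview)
Your proof is correct and follows essentially the same approach as the paper: both reduce to checking the action on the PBW basis elements of $M_0$ (using that $M$ is a $(U(\fq),U(\fh_{\0}))$-bisupermodule), handle part~(1) immediately from $e_{i,j}^{\ep}\in U(\fn^-)$, and prove part~(2) by induction on the length of the $U(\fn^{-})$-monomial $u$ via the commutator formula $e_{i,j}^{\ep}e_{k,l}^{\ep'}=\pm e_{k,l}^{\ep'}e_{i,j}^{\ep}+\delta_{j,k}e_{i,l}^{\ep+\ep'}\mp\delta_{i,l}e_{k,j}^{\ep+\ep'}$. Your write-up is in fact more explicit than the paper's (which just cites the commutator formulas of Du--Wan and leaves the details to the reader): you separate out the diagonal case $e\in\fh$ first, verify the base case $\fn\cdot w\hw=0$ cleanly via $[\fn,\fh_{\1}]\subseteq\fn$, and your final paragraph nicely articulates why the filtration jump is at most one rather than two.
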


\begin{proof}
    Since $M$ is a $(U(\fq ), U(\fh_{\0}))$-bisupermodule with the grading coming from the right action by $U(\fh_{\\0})$, it suffices to show that the result of acting on \cref{M basis} on the left by $e_{i,j}^\ep$ is (1) in $M_0$ when $i>j$; and (2) in $M_{\leq 1}$ when $i\leq j$.

Part (1) follows from the fact that $e_{i,j}^\ep$ is an element of the subsuperalgebra $U(\fn^{-})$ whenever $i>j$. That is, since monomials of the form $f_1^{a_1}\cdots f_N^{a_N}\bar{f}_1^{b_1}\cdots \bar{f}_N^{b_N}$ are a basis for $U(\fn^{-})$ by the PBW theorem, after acting by $e_{i,j}^\ep$ in $U(\fn^{-})$ one can rewrite the result as a sum of elements which lie in $M_{0}$. 

For part (2) one can argue by induction on 
    \begin{equation}\label{induction sum}
        a_1+b_1+\cdots+a_N+b_N.
    \end{equation} Namely, act by $e_{i,j}^{\ep}$ on \cref{M basis} and then use the commutator formulas $U(\fq)$ given in \cite[Sections 2 and 3]{DuWan} to rewrite the expression into a sum of terms with smaller \cref{induction sum}.  The base case when this sum equals zero also follows by a calculation using the commutator formulas of \cite{DuWan}.

\end{proof}  

\begin{lemma}\label{Psi-M is filtered}
    For any $\ob a\in\wrd$, the map $V\otimes(V^{\ob a}\otimes M)\to V\otimes(V^{\ob a}\otimes M)$ given by the action of $\Omega$ is filtered degree 1. 
\end{lemma}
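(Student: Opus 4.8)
The plan is to reduce the claim to \cref{eM filtered degree 1} by analyzing how $\Omega$ acts on $V\otimes(V^{\ob a}\otimes M)$. Recall from \cref{Casimir} that $\Omega=\sum_{1\leq i,j\leq n}\tilde e_{i,j}^{\0}\otimes e_{j,i}^{\0}+\tilde e_{i,j}^{\1}\otimes e_{j,i}^{\1}$, where the first tensor factor acts on the outermost copy of $V$ and the second tensor factor acts (via the coproduct, since $V^{\ob a}\otimes M$ is a $U(\fq)$-supermodule) on $V^{\ob a}\otimes M$. So I would write $\Psi$ of the relevant morphism as $w\otimes u\mapsto \sum_{i,j,\ep}(\pm)\,\tilde e_{i,j}^{\ep}.w\otimes e_{j,i}^{\ep}.u$, and observe that the operator $\tilde e_{i,j}^{\ep}$ on the finite-dimensional space $V$ has filtered (indeed ordinary) degree $0$ — the filtration on $V$ is trivial, $V=V_{\leq 0}$ — so all of the filtered-degree bookkeeping is carried by the factor acting on $V^{\ob a}\otimes M$.

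Next I would unwind the coproduct. Acting on $V^{\ob a}\otimes M$, the element $e_{j,i}^{\ep}$ acts as $\sum_{\text{legs}}(\pm)\,1\otimes\cdots\otimes e_{j,i}^{\ep}\otimes\cdots\otimes 1$, i.e.\ as a sum of terms each of which acts by $e_{j,i}^{\ep}$ on one tensor leg (some leg of $V^{\ob a}$, or on $M$) and by the identity on the others. On a leg of $V^{\ob a}$ — which is either $V$ or $V^*$, both finite-dimensional with trivial filtration — the operator $e_{j,i}^{\ep}$ again has filtered degree $0$. On the leg $M$, \cref{eM filtered degree 1} tells us that left multiplication by $e_{j,i}^{\ep}$ has filtered degree $1$ when $j\leq i$ and filtered degree $0$ when $j>i$. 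Since $\Omega$ is a finite sum over $i,j$, the worst case is filtered degree $1$, and tensoring with the degree-$0$ operators on the other legs preserves this (using that in $\fsvec$ a tensor product of a filtered-degree-$a$ map with a filtered-degree-$b$ map has filtered degree $a+b$, as recorded in \cref{subsection: graded and filtered super-cats}). Hence the whole operator has filtered degree $\leq 1$, which is what "filtered degree $1$" means here.

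The one genuinely delicate point is making sure the coproduct expansion of $e_{j,i}^{\ep}$ on the tensor product $V^{\ob a}\otimes M$ really does split as a sum of single-leg actions with only filtered-degree-$0$ and (on the $M$ leg) filtered-degree-$1$ pieces — in particular that no cumulative effect across many legs pushes the filtered degree up. This is fine because the coproduct on a primitive element $x\in\fq$ is $x\otimes 1+1\otimes x$, so iterating it over the $r:=|\ob a|+1$ legs produces exactly the sum $\sum_{\text{legs }p} 1\otimes\cdots\otimes x\otimes\cdots\otimes 1$ (with Koszul signs), each summand touching a single leg; there is no "diagonal" term touching two legs at once. So I would just cite the shape of the iterated coproduct, note that $V^{\ob a}$ contributes only trivially filtered legs, invoke \cref{eM filtered degree 1} for the $M$ leg, and conclude. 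I expect the writeup to be short; the main (minor) obstacle is simply being careful with the Koszul signs and with the convention that $V$ and $V^{\ob a}$ carry the trivial filtration concentrated in degree $0$, so that "filtered degree $0$" operators on those factors do not interfere.
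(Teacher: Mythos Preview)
Your proposal is correct and follows essentially the same approach as the paper, which simply cites the Casimir formula \cref{Casimir} and \cref{eM filtered degree 1}; you have merely spelled out the coproduct expansion and the filtered-degree bookkeeping that the paper leaves implicit.
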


\begin{proof}
    This follows from the definition of the Casimir \cref{Casimir} and \cref{eM filtered degree 1}. 
\end{proof}

Using the previous lemma we can show that the composition of $\Psi$ followed by evaluation at $M$ defines a filtered monoidal superfunctor
\begin{equation*}
    \Psi_M:\AOBC\to\fsvec.
\end{equation*}
Indeed, it is straightforward to see that $\Psi_M(d)$ is filtered degree $0$ whenever $d$ is an oriented Brauer-Clifford diagram. The fact that $\Psi_M(d)$ is a filtered map with filtered degree $\deg(d)$ whenever $d$ is a dotted oriented Brauer-Clifford diagram with bubbles follows from \cref{Psi-M is filtered}. 

\subsection{Some \texorpdfstring{$\Psi_M$}{PsiM} calculations}\label{SS:PsiM calculations}
In this subsection we prove several lemmas concerning the superfunctor $\Psi_M$, which will be used in \cref{section: proof of AOBC basis theorem} to prove our basis theorem for $\AOBC$. 

\begin{lemma}\label{x1 calculation}
    Whenever $1\leq i\leq n$, the degree 1 components of $\Psi_M(\xdot)(v_i\otimes\hw)$ and $\Psi_M(\xdot)(v_\ibar\otimes\hw)$ are $v_i\otimes\hw h_i$ and $-v_\ibar\otimes\hw h_i$ respectively. 
\end{lemma}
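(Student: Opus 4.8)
The plan is to compute $\Psi_M(\xdot)(v_i \otimes \hw)$ directly from the definition $\Psi(\xdot) = \Omega \cdot (-)$, where $\Omega = \sum_{1\leq k,l\leq n} \tilde{e}_{k,l}^{\0} \otimes e_{l,k}^{\0} + \tilde{e}_{k,l}^{\1} \otimes e_{l,k}^{\1}$ as in \cref{Casimir}, applied to $v_i \otimes \hw \in V \otimes M$. Unwinding, $\Omega(v_i \otimes \hw) = \sum_{1\leq k,l\leq n} \tilde{e}_{k,l}^{\0}.v_i \otimes e_{l,k}^{\0}.\hw + (-1)^{\p{v_i}}\tilde{e}_{k,l}^{\1}.v_i \otimes e_{l,k}^{\1}.\hw$. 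Since $v_i$ is even (for $1\leq i\leq n$) the sign $(-1)^{\p{v_i}}$ is $+1$; for $v_\ibar$ it is $-1$, which is the source of the sign discrepancy between the two asserted formulas. So the main task is to evaluate $\tilde{e}_{k,l}^\ep.v_i$ and the filtered-degree-$1$ part of $e_{l,k}^\ep.\hw$.

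First I would record the action of the matrix units on the basis of $V$: from the definitions in \cref{SS:Liesuperalgebras}, $\tilde e_{k,l}^{\0} = e_{k,l} - e_{\bar k,\bar l}$ sends $v_i \mapsto \delta_{l,i} v_k$ and $v_\ibar \mapsto -\delta_{l,i} v_{\bar k}$, while $\tilde e_{k,l}^{\1} = e_{\bar k,l} - e_{k,\bar l}$ sends $v_i \mapsto \delta_{l,i} v_{\bar k}$ and $v_\ibar \mapsto -\delta_{l,i}v_k$. Thus for the input $v_i\otimes\hw$ only the terms with $l=i$ survive, giving $\Omega(v_i\otimes\hw) = \sum_{1\leq k\leq n} v_k \otimes e_{i,k}^{\0}.\hw + v_{\bar k}\otimes e_{i,k}^{\1}.\hw$. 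Now I invoke \cref{eM filtered degree 1}: $e_{i,k}^\ep.\hw$ has filtered degree $0$ when $i>k$ and filtered degree $1$ when $i\leq k$, so modulo lower filtered degree only the terms with $k\geq i$ contribute to the degree-$1$ part. For $k>i$ the element $e_{i,k}^\ep$ lies in $U(\fn)$ and kills $\hw$ (as $\hw$ is the highest weight vector and $U(\fh)$ acts on it through the inflation of $U(\fh)$, with $e_{i,k}^\ep\hw$ being a degree-$1$ element of $M$ that vanishes because $e_{i,k}^\ep \in U(\fn)_+$ acts as $0$ on the inflated module — more precisely $e_{i,k}^{\ep}.\hw = 0$ in $M$ for $i<k$). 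Hence only $k=i$ survives in the degree-$1$ part: $e_{i,i}^{\0}.\hw = h_i.\hw = \hw h_i$ and $e_{i,i}^{\1}.\hw = 0$ (since $e_{i,i}^{\1}$ applied to the trivial-on-$U(\fh)$... actually $e_{i,i}^{\1}.\hw$ is an odd degree-$1$ element; I need to check it is $0$ or at least has degree-$1$ component contributing only to the $v_\ibar$ slot — it contributes $v_\ibar \otimes e_{i,i}^{\1}.\hw$, and $e_{i,i}^{\1}.\hw$, being the action of an element of $\fh_{\1}$, equals $\hw \bar h_i$ in the notation of \cref{M basis}, which has filtered degree $0$, not $1$). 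Therefore the degree-$1$ component of $\Psi_M(\xdot)(v_i\otimes\hw)$ is $v_i\otimes\hw h_i$.

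For $v_\ibar\otimes\hw$ the computation is identical except every $\tilde e_{k,l}^\ep.v_\ibar$ carries an extra minus sign and the parity prefactor $(-1)^{\p{v_\ibar}} = -1$ multiplies the $\1$-part; tracking these, $\Omega(v_\ibar\otimes\hw) = \sum_k \big(-v_{\bar k}\otimes e_{i,k}^{\0}.\hw\big) + (-1)\big(-v_k \otimes e_{i,k}^{\1}.\hw\big)$, and the degree-$1$ part collapses to $-v_\ibar\otimes h_i.\hw = -v_\ibar \otimes \hw h_i$. The main obstacle — really the only delicate point — is pinning down precisely which terms $e_{i,k}^\ep.\hw$ are killed and which contribute in filtered degree $1$; this rests entirely on \cref{eM filtered degree 1} together with the fact that $U(\fn)_+$ annihilates $\hw$ and $U(\fh_{\0})$ acts by right multiplication (degree $1$), with $U(\fh_{\1})$ acting in degree $0$ by \cref{M basis}. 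Once that bookkeeping is in place, the result drops out; I would present it compactly as a two-line display for each of the two cases rather than grinding through all $2n$ summands.
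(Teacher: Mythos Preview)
Your proposal is correct and follows essentially the same approach as the paper: expand the Casimir action and observe that only the $e_{i,i}^{\0}$ term contributes in degree~$1$. The paper's version is simply terser, stating directly that $e_{i,j}^\ep\hw$ has degree~$0$ unless $\ep=\0$ and $i=j$ (the case analysis you carry out explicitly), whereas you route through \cref{eM filtered degree 1} and a somewhat meandering self-correction about $e_{i,i}^{\1}.\hw$; for a cleaner write-up, just assert up front that $e_{i,j}^\ep\hw$ is zero for $i<j$, lies in $M_0$ for $i>j$ or $(i=j,\ep=\1)$, and equals $\hw h_i$ for $(i=j,\ep=\0)$.
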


\begin{proof}
    Note that $e_{i,j}^\ep\hw$ has degree 0 unless $\ep=0$ and $i=j$, in which case we have $e_{i,j}^\ep\hw=h_i\hw=\hw h_i$. Thus, using \cref{Casimir} we see the degree 1 components of $\Psi_M(\xdot)(v_i\otimes\hw)$ and $\Psi_M(\xdot)(v_\ibar\otimes\hw)$ are as given by the lemma.
\end{proof}

\begin{lemma}\label{bubbles under Psi-M}
    The degree $k$ component of $\Psi_M(\Delta_k)(\hw)$ is $2\hw(h_n^k+\cdots+h_1^k)$ whenever $k$ is odd.
\end{lemma}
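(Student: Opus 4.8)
The plan is to carry out the computation in the proof of \cref{T:centralelements} with the regular module replaced by $M$. Reading $\Delta_k$ as the evident composite of $\lcup$, a braiding, $k$ copies of $\xdot$, and $\lcap$, and unwinding the definition of $\Psi_M$ exactly as in that proof, one obtains
\[
\Psi_M(\Delta_k)(\hw)=(\ev_V\otimes 1)\Big(\sum_{i\in I}(-1)^{\p{v_i}}\,v_i^*\otimes\Omega^k(v_i\otimes\hw)\Big),
\]
where $\Omega$ now acts on the last two tensor factors, i.e.\ on $V\otimes M$, and $\ev_V\otimes 1\colon V^*\otimes V\otimes M\to M$ is $f\otimes v\otimes m\mapsto f(v)m$. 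Since $\Psi_M$ is a filtered superfunctor and $\deg(\Delta_k)=k$, the left-hand side has filtered degree $\leq k$ a priori; the lemma pins down its degree-$k$ component, so it suffices to compute, for each $i\in I$, the degree-$k$ component of $\Omega^k(v_i\otimes\hw)$.

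The heart of the argument is an induction on $k$. I would first record that, for any $p\in U(\fh_{\0})$, left multiplication by $U(\fq)$ commutes with the right $U(\fh_{\0})$-action on $M$, so $e_{b,a}^\ep(\hw p)=(e_{b,a}^\ep\hw)p$; moreover $e_{b,a}^\ep\hw=0$ for $b<a$, $e_{a,a}^{\0}\hw=\hw h_a$, and for every other triple $(b,a,\ep)$ the element $e_{b,a}^\ep\hw$ lies in $M_0$ --- all read off from the definition of $M$ and the PBW basis \cref{M basis}, and underlying \cref{eM filtered degree 1}. Using the formula \cref{Casimir} for $\Omega$, it follows that for $v\in V$ and $p\in U(\fh_{\0})$ the only part of $\Omega(v\otimes\hw p)$ of degree strictly greater than $\deg(\hw p)$ is $\sum_{a=1}^{n}(\tilde e_{a,a}^{\0}v)\otimes\hw h_a p$. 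Since $\tilde e_{a,a}^{\0}=e_{a,a}-e_{\bar a,\bar a}$ acts on $v_i$ by $\delta_{a,i}$ when $i\in I_0$ and on $v_{\ibar}$ by $-\delta_{a,i}$, the base case $k=1$ is exactly \cref{x1 calculation}; for the inductive step one writes $\Omega^{k-1}(v_i\otimes\hw)$ as its degree-$(k-1)$ component (known by induction) plus terms of degree $\leq k-2$, notes that $\Omega$ raises the latter to degree $\leq k-1$, and applies the displayed fact to the top component. One concludes that the degree-$k$ component of $\Omega^k(v_i\otimes\hw)$ is $v_i\otimes\hw h_i^{k}$ when $i\in I_0$ and $(-1)^k v_{\ibar}\otimes\hw h_i^{k}$ when $i=\ibar$, $i\in I_0$.

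Finally I would assemble the answer: since $(\ev_V\otimes 1)(v_i^*\otimes v_i\otimes\hw h_i^k)=\hw h_i^k$, the degree-$k$ component of $\Psi_M(\Delta_k)(\hw)$ is
\[
\sum_{i\in I_0}\hw h_i^{k}+\sum_{i\in I_0}(-1)^{k+1}\hw h_i^{k}=\bigl(1+(-1)^{k+1}\bigr)\sum_{i=1}^{n}\hw h_i^{k},
\]
which equals $2\hw(h_n^{k}+\cdots+h_1^{k})$ because $k$ is odd. The step I expect to be the main obstacle is the bookkeeping in the induction: one must check that no summand of $\Omega^k$ other than the iterated diagonal even term contributes to the degree-$k$ component, which relies on the fact (from \cref{eM filtered degree 1} and the explicit structure of $M$) that the odd diagonal elements $e_{a,a}^{\1}$ and all off-diagonal $e_{b,a}^\ep$ act on $\hw\,U(\fh_{\0})$ without raising the $\Z$-grading.
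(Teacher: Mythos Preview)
Your proposal is correct and follows essentially the same approach as the paper: decompose $\Delta_k$ as $\lcap\circ(\mapdown\otimes\xdot^k)\circ\rswap\circ\lcup$, evaluate $\Psi_M$ step by step, and identify the top-degree contribution. The only difference is cosmetic: the paper cites \cref{x1 calculation} once and tacitly iterates it to pass directly from the $k=1$ case to the formula with $h_i^k$, whereas you spell out the induction on $k$ explicitly (and frame it via the parallel computation in \cref{T:centralelements}); your explicit induction is a welcome clarification of exactly that implicit step.
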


\begin{proof}
    In the following computation of $\Psi_M(\Delta_k)(\hw)$ we list only the top degree terms:
    \begin{align*}
        \hw & \mapsto \sum_{1\leq i\leq n} v_i\otimes v_i^*\otimes\hw + v_\ibar\otimes v_\ibar^*\otimes\hw 
                & \text{(apply $\Psi_M\left(\lcup\right)$)}\\
            & \mapsto \sum_{1\leq i\leq n} v_i^*\otimes v_i\otimes\hw - v_\ibar^*\otimes v_\ibar\otimes\hw 
                & \text{(apply $\Psi_M\left(\rswap\right)$)}\\
            & \mapsto \sum_{1\leq i\leq n} v_i^*\otimes v_i\otimes\hw h_i^k - (-1)^kv_\ibar^*\otimes v_\ibar\otimes\hw h_i^k +\cdots
                & \text{(apply $\Psi_M\left(\downxdotk\!\!\right)$ and}\\
            &   & \text{ use  \cref{x1 calculation})}\\
            & \mapsto \sum_{1\leq i\leq n} \hw h_i^k - (-1)^k\hw h_i^k +\cdots
                & \text{(apply $\Psi_M\left(\lcap\right)$)}.
    \end{align*}
    Now, assume $k$ is odd to get $\Psi_M(\Delta_k)(\hw)=2\hw(h_n^k+\cdots+h_1^k)+\dotsb $.
\end{proof}

\begin{remark} \label{R:algebraicallyindependent}
    Assume $\k$ has characteristic zero.  By taking $n$ sufficently large, the previous lemma along with the fact the first $n$ power sums in $h_{n}, \dotsc , h_{1}$ are algebraically independent can be used to show $\Delta_1,\Delta_3,\Delta_5,\ldots$ are algebraically independent.
\end{remark}

The next two lemmas concern $\Psi_M(d)$ for certain diagrams of the form $d:\up^r\to\up^r$. It will be convenient to let $x_k:\up^r\to\up^r$ denote the diagram obtained from the identity diagram by placing a single $\fulldot$ on the $k$th strand from the right. We will also let $(i,j):\up^r\to\up^r$ denote the crossing of the $i$th and $j$th strands whenever $1\leq i,j\leq r$.  For example, if $r=8$ then 
\[
    x_3=~
    \begin{tikzpicture}[baseline = 7.5pt, scale=0.75, color=\clr]
        \draw[->,thick] (0,0) to (0,1);
        \draw[->,thick] (0.5,0) to (0.5,1);
        \draw[->,thick] (1,0) to (1,1);
        \draw[->,thick] (1.5,0) to (1.5,1);
        \draw[->,thick] (2,0) to (2,1);
        \draw[->,thick] (2.5,0) to (2.5,1);
        \draw[->,thick] (3,0) to (3,1);
        \draw[->,thick] (3.5,0) to (3.5,1);
        \draw (2.5,0.4) \bdot;
    \end{tikzpicture}
    \quad
    \text{and}
    \quad
    (4,1)=~
    \begin{tikzpicture}[baseline = 7.5pt, scale=0.75, color=\clr]
        \draw[->,thick] (0,0) to (0,1);
        \draw[->,thick] (0.5,0) to (0.5,1);
        \draw[->,thick] (1,0) to (1,1);
        \draw[->,thick] (1.5,0) to (1.5,1);
        \draw[->,thick] (2,0) to[out=up,in=down] (3.5,1);
        \draw[->,thick] (2.5,0) to (2.5,1);
        \draw[->,thick] (3,0) to (3,1);
        \draw[->,thick] (3.5,0) to[out=up,in=down] (2,1);
    \end{tikzpicture}
    ~.
\]

\begin{lemma}\label{xk calculation}
    The degree 1 component of $\Psi_M(x_k)(v_r\otimes\cdots v_1\otimes\hw)$ is $v_r\otimes\cdots v_1\otimes\hw h_k$.
\end{lemma}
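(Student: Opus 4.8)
The plan is to unwind the definition of $\Psi_M$ on the diagram $x_k$ directly and read off the top-degree term, following the same strategy as the proof of \cref{x1 calculation}, which is essentially the case $r=k=1$. \textbf{Reduction to a single $\Omega$.} Since $\Psi$ is a monoidal superfunctor and, as a morphism in $\AOBC$, $x_k=\id_{\up^{r-k}}\otimes\xdot\otimes\id_{\up^{k-1}}$, the formula for the tensor product of supernatural transformations in $\END(\Uksmod)$ (taken with the right-hand factor an identity) gives $\Psi_M(x_k)=\id_{V^{\otimes(r-k)}}\otimes\Psi(\xdot)_{V^{\otimes(k-1)}\otimes M}$. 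Hence applying $\Psi_M(x_k)$ to $v_r\otimes\cdots\otimes v_1\otimes\hw$ leaves $v_r,\dots,v_{k+1}$ untouched and replaces $v_k\otimes(v_{k-1}\otimes\cdots\otimes v_1\otimes\hw)$ by $\Omega.\bigl(v_k\otimes(v_{k-1}\otimes\cdots\otimes v_1\otimes\hw)\bigr)$, where the first leg of $\Omega$ acts on $v_k$ as an element of $\gl(V)$ and the second leg acts on $v_{k-1}\otimes\cdots\otimes v_1\otimes\hw$ through the iterated coproduct.

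\textbf{Isolating the degree-one part.} Expand $\Omega=\sum_{1\le i,j\le n}\tilde{e}_{i,j}^{\0}\otimes e_{j,i}^{\0}+\tilde{e}_{i,j}^{\1}\otimes e_{j,i}^{\1}$ and observe that the $\Z$-grading on $V^{\otimes r}\otimes M$ is carried entirely by the $M$-factor: the $\fq$-action on each copy of $V$ is homogeneous of degree $0$, and (as already recorded in the proof of \cref{x1 calculation}, or via \cref{eM filtered degree 1}) $e_{j,i}^{\ep}\hw$ has degree $0$ unless $\ep=\0$ and $j=i$, in which case $e_{i,i}^{\0}\hw=\hw h_i$ has degree $1$. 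Running $e_{j,i}^{\ep}$ through the coproduct on $v_{k-1}\otimes\cdots\otimes v_1\otimes\hw$, every summand in which it hits one of the $v_l$ stays in degree $0$, so the only degree-one contribution comes from the summand hitting $\hw$ with $\ep=\0$ and $j=i$; these terms are even, so no Koszul signs intervene. Therefore the degree-one component of $\Omega.\bigl(v_k\otimes(v_{k-1}\otimes\cdots\otimes v_1\otimes\hw)\bigr)$ equals $\sum_{i=1}^{n}(\tilde{e}_{i,i}^{\0}.v_k)\otimes v_{k-1}\otimes\cdots\otimes v_1\otimes\hw h_i$.

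\textbf{Evaluating and conclusion.} Since $\tilde{e}_{i,i}^{\0}=e_{i,i}-e_{\bar{i},\bar{i}}$ and $1\le k\le n$, we have $\tilde{e}_{i,i}^{\0}.v_k=\delta_{i,k}v_k$, so the sum collapses to $v_k\otimes v_{k-1}\otimes\cdots\otimes v_1\otimes\hw h_k$, and restoring the spectator factors $v_r\otimes\cdots\otimes v_{k+1}$ yields the asserted value $v_r\otimes\cdots\otimes v_1\otimes\hw h_k$. I do not anticipate a genuine obstacle: the whole content is the observation that the $\Z$-grading is concentrated on $M$, so acting on a $V$-leg or applying an odd Clifford-type element cannot raise the degree; this makes the first two steps bookkeeping rather than computation, and the lemma is a direct refinement of \cref{x1 calculation}.
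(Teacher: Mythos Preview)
Your proof is correct, but it takes a different route from the paper. The paper first observes that $x_k-(k,1)x_1(k,1)$ lies in the span of oriented Brauer--Clifford diagrams (no closed dots), so $\Psi_M$ sends it to a filtered-degree-$0$ map; hence the degree-$1$ components of $\Psi_M(x_k)$ and $\Psi_M((k,1)x_1(k,1))$ on $v_r\otimes\cdots\otimes v_1\otimes\hw$ agree, reducing to the case $k=1$, which follows immediately from \cref{x1 calculation} because $x_1=\id_{\up^{r-1}}\otimes\xdot$. You instead unpack $\Psi_M(x_k)$ directly as $\id_{V^{\otimes(r-k)}}\otimes\Omega$ acting on $V\otimes(V^{\otimes(k-1)}\otimes M)$, push the second leg of $\Omega$ through the coproduct, and isolate the unique degree-$1$ contribution coming from $e_{i,i}^{\0}\hw=\hw h_i$. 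The paper's conjugation trick avoids unpacking the coproduct on a long tensor product and stays entirely inside the diagram calculus; your direct computation is more self-contained and makes transparent exactly which term of $\Omega$ survives. Both are short, and your version has the minor bonus of not relying on the identity $x_k\equiv(k,1)x_1(k,1)$ modulo lower filtration.
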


\begin{proof}
    First, $x_k-(k,1)x_1(k,1)$ is a linear combination of oriented Brauer-Clifford diagrams whenever $k>1$. Hence, since $\Psi_M$ is filtered and oriented Brauer-Clifford diagrams are degree 0, the degree 1 components of $\Psi_M(x_k)(v_r\otimes\cdots\otimes v_1\otimes\hw)$ and $\Psi_M((k,1)x_1(k,1))(v_r\otimes\cdots\otimes v_1\otimes\hw)$ are equal. Thus, it suffices to prove the case $k=1$, which follows from \cref{x1 calculation}.
\end{proof}

Let us fix the following notation for the remainder of the paper. Given a dotted oriented Brauer-Clifford diagram $d$, let $\undot{d}$ denote the oriented Brauer-Clifford diagram obtained from $d$ by removing all $\fulldot$'s. For example, if 
\begin{equation}\label{undot example}
    d=~
    \begin{tikzpicture}[baseline = 10pt, scale=0.5, color=\clr]
        \draw[->,thick] (0,0) to[out=up, in=down] (2,2);
        \draw[->,thick] (1,0) to[out=up, in=down] (4,2);
        \draw[->,thick] (2,0) to[out=up, in=down] (0,2);
        \draw[->,thick] (3,0) to[out=up, in=down] (1,2);
        \draw[->,thick] (4,0) to[out=up, in=down] (5,2);
        \draw[->,thick] (5,0) to[out=up, in=down] (3,2);
        \draw (0.15,1.5) \wdot;
        \draw (1.85,1.5) \wdot;
        \draw (4.9,1.5) \wdot;
        \draw (0.15,0.45) \bdot;
        \draw (2.85,0.45) \bdot;
        \draw (2.85,1.05) \bdot;
        \draw (3.85,1.05) \bdot;
        \draw (4.85,0.45) \bdot;
    \end{tikzpicture},
    \quad\text{then}~ 
    \undot{d}=~
    \begin{tikzpicture}[baseline = 10pt, scale=0.5, color=\clr]
        \draw[->,thick] (0,0) to[out=up, in=down] (2,2);
        \draw[->,thick] (1,0) to[out=up, in=down] (4,2);
        \draw[->,thick] (2,0) to[out=up, in=down] (0,2);
        \draw[->,thick] (3,0) to[out=up, in=down] (1,2);
        \draw[->,thick] (4,0) to[out=up, in=down] (5,2);
        \draw[->,thick] (5,0) to[out=up, in=down] (3,2);
        \draw (0.15,1.5) \wdot;
        \draw (1.85,1.5) \wdot;
        \draw (4.9,1.5) \wdot;
    \end{tikzpicture}
    ~.
\end{equation}
Now, suppose $d: \up^r\to\up^r$ is a normally ordered dotted oriented Brauer-Clifford diagram (as in \cref{D:N.O. dotted OBC diagram}) without bubbles. We let $\beta_k(d)$ denote the number of $\fulldot$'s on the $k$th strand of $d$, where we count strands right-to-left according to their position on the bottom boundary of $d$. In particular, we have 
\begin{equation*}
    d=\undot{d}\circ x_r^{\beta_r(d)}\circ\cdots\circ x_1^{\beta_1(d)}.
\end{equation*}
Finally, given $g\in\End_{\OBC}(\up^r)$ we let $v(g)\in V^{\otimes r}$ denote the image of $v_r\otimes\cdots\otimes v_1$ under $\Phi(g)$. For example, when $d$ as in \cref{undot example} we have 
\[v(\undot{d})=-\sqrt{-1} v_{\bar{4}}\otimes v_{3} \otimes v_{\bar{6}} \otimes v_{1} \otimes v_{5} \otimes v_{\bar{2}}.\]
With this notation in mind, the following result follows immediately from \cref{xk calculation}.

\begin{lemma}\label{top component lemma}
    For any normally ordered dotted oriented Brauer-Clifford diagram $d:\up^r\to\up^r$ without bubbles, the top degree component of $\Psi_M(d)(v_r\otimes\cdots v_1\otimes\hw)$ is $v(\undot{d})\otimes\hw h_r^{\beta_r(d)}\cdots h_1^{\beta_1(d)}$. 
\end{lemma}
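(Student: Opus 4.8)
The plan is to exploit the factorization $d=\undot{d}\circ x_r^{\beta_r(d)}\circ\cdots\circ x_1^{\beta_1(d)}$ recorded just above and to follow the top‑degree part of the image of $w:=v_r\otimes\cdots\otimes v_1\otimes\hw$ stage by stage. The ingredients are that $\Psi_M$ is a filtered monoidal superfunctor (so $\Psi_M(x_k)$ has filtered degree $1$ and $\Psi_M(\undot{d})$ has filtered degree $0$, $\undot{d}$ being a bubble‑free oriented Brauer–Clifford diagram) together with \cref{xk calculation}. The one structural fact that makes the iteration go through is that $\Psi_M(g)\colon V^{\otimes r}\otimes M\to V^{\otimes r}\otimes M$ is right $U(\fh_{\0})$‑linear for \emph{every} morphism $g\colon\up^r\to\up^r$ of $\AOBC$: since $M$ is a $(U(\fq),U(\fh_{\0}))$‑bimodule and $U(\fh_{\0})$ is concentrated in parity $\0$, right multiplication by any $h\in U(\fh_{\0})$ is an even $U(\fq)$‑supermodule endomorphism of $M$, so $\Psi_M(g)=\Psi(g)_M$ commutes with it by naturality of the supernatural transformation $\Psi(g)$.

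First I would dispose of the dotted part. Writing $x^\gamma:=x_r^{\gamma_r}\circ\cdots\circ x_1^{\gamma_1}$ and $h^\gamma:=h_1^{\gamma_1}\cdots h_r^{\gamma_r}\in U(\fh_{\0})$ for $\gamma=(\gamma_1,\dots,\gamma_r)\in\Z_{\geq0}^r$, I claim by induction on $|\gamma|:=\gamma_1+\cdots+\gamma_r$ that the top‑degree component of $\Psi_M(x^\gamma)(w)$ is $v_r\otimes\cdots\otimes v_1\otimes\hw\,h^\gamma$, lying in degree $|\gamma|$. The base case is immediate. For the step, pick $k$ maximal with $\gamma_k>0$, so the composite factors as $x_k\circ x^{\gamma-\delta_k}$ with $\delta_k$ the $k$‑th unit vector; apply $\Psi_M(x_k)$ to the inductive expression $w\cdot h^{\gamma-\delta_k}+(\text{terms of degree}<|\gamma|-1)$. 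Since $\Psi_M(x_k)$ has filtered degree $1$, the lower‑degree terms remain in degree $<|\gamma|$, while right $U(\fh_{\0})$‑linearity and \cref{xk calculation} give $\Psi_M(x_k)(w\cdot h^{\gamma-\delta_k})=\bigl(\Psi_M(x_k)(w)\bigr)\cdot h^{\gamma-\delta_k}=w\cdot h_k h^{\gamma-\delta_k}+(\text{lower})=w\cdot h^{\gamma}+(\text{lower})$, the middle equalities using commutativity of $U(\fh_{\0})$; and $w\cdot h^\gamma\neq 0$ because $M$ is free over $U(\fh_{\0})$.

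Then I would apply $\Psi_M(\undot{d})$. Because the formulas defining $\Psi$ on the generators $\lcup,\lcap,\swap,\cldot$ of $\OBC$ never involve the trailing tensor factor, $\Psi_M(g)$ for $g\in\End_\OBC(\up^r)$ acts on $V^{\otimes r}\otimes M$ only through the $V^{\otimes r}$ factor, as $\Phi(g)$; in particular $\Psi_M(\undot{d})(v_r\otimes\cdots\otimes v_1\otimes m)=v(\undot{d})\otimes m$ for all $m\in M$, and this map has filtered degree $0$. Hence, using $\deg(d)=\beta_1(d)+\cdots+\beta_r(d)$, the degree‑$\deg(d)$ component of $\Psi_M(d)(w)=\Psi_M(\undot{d})\bigl(\Psi_M(x^{\beta(d)})(w)\bigr)$ equals $v(\undot{d})\otimes\hw\,h_r^{\beta_r(d)}\cdots h_1^{\beta_1(d)}$ and nothing sits higher. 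This element is nonzero: a bubble‑free oriented Brauer diagram of type $\up^r\to\up^r$ has no cups or caps (an arc joining two bottom endpoints, or two top endpoints, would have to reverse its orientation, impossible since all $2r$ endpoints are $\up$‑oriented), so $\undot{d}$ is an isomorphism in $\OBC$, whence $\Phi(\undot{d})$ is invertible and $v(\undot{d})\neq 0$; together with $U(\fh_{\0})$‑freeness of $M$ this identifies the displayed element as the top‑degree component, as claimed.

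I do not anticipate a genuine obstacle. The only point demanding care is the inductive step for the dotted part: \cref{xk calculation} by itself evaluates $\Psi_M(x_k)$ only on the single vector $w$, so the right $U(\fh_{\0})$‑equivariance is genuinely needed to evaluate it on $w\cdot h^{\gamma-\delta_k}$, and one must keep a careful filtered‑degree count to guarantee that the lower‑order contributions never reach top degree.
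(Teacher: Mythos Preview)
Your proposal is correct and follows the same approach the paper intends: the paper's proof is the single sentence ``follows immediately from \cref{xk calculation}'', and you have spelled out precisely how that deduction goes (factor $d=\undot{d}\circ x_r^{\beta_r(d)}\circ\cdots\circ x_1^{\beta_1(d)}$, iterate \cref{xk calculation} using right $U(\fh_{\0})$-linearity, then apply $\Psi_M(\undot{d})=\Phi(\undot{d})\otimes 1_M$). Your explicit justification of right $U(\fh_{\0})$-equivariance via naturality of $\Psi(g)$, and your observation that $\undot{d}$ is invertible so $v(\undot{d})\neq 0$, are exactly the details the paper leaves implicit.
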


\section{Proof of the main result}\label{section: proof of AOBC basis theorem}

We can now prove the key special case of the main result.  Namely, the normally ordered dotted oriented Brauer-Clifford diagrams with bubbles provide a basis for $\End_{\AOBC}(\up^{r})$.

\begin{theorem}\label{Basis theorem for affine Hecke-Clifford with bubbles}  Assume $\k$ has characteristic zero. Then the set of equivalence classes of normally ordered dotted oriented Brauer-Clifford diagrams with bubbles of type $\up^r\to \up^r$ form a basis for $\End_\AOBC(\up^r)$. 
\end{theorem}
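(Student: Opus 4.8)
The plan is to prove this by combining the ``normally ordered diagrams span'' result (Lemma~\ref{lemma: normally ordered diagrams span}) with a linear-independence argument coming from the generic Verma supermodule. Since we already know from Lemma~\ref{lemma: normally ordered diagrams span} that the equivalence classes of normally ordered dotted oriented Brauer-Clifford diagrams with bubbles of type $\up^r\to\up^r$ span $\End_\AOBC(\up^r)$, the entire content of the theorem is their linear independence. The key tool is the filtered monoidal superfunctor $\Psi_M:\AOBC\to\fsvec$ and the ``top degree'' computations in \cref{top component lemma} and \cref{bubbles under Psi-M}.

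First I would reduce to the bubble-free case. A normally ordered diagram $D:\up^r\to\up^r$ with bubbles factors (up to sign) as a diagram $d:\up^r\to\up^r$ \emph{without} bubbles tensored on the right (on the free edge) with a product of counterclockwise bubbles $\Delta_{k_1}\Delta_{k_2}\cdots$ with $k_i$ odd; by \cref{R:algebraicallyindependent} (valid since $\chr\k=0$, and after enlarging $n$) the bubble monomials $\Delta_1^{m_1}\Delta_3^{m_3}\cdots$ are algebraically independent in $\End_\AOBC(\unit)$. Using that $\End_\AOBC(\unit)$ is central and that tensoring with $\unit$-endomorphisms is compatible with $\Psi_M$, a nontrivial linear relation among normally ordered diagrams with bubbles would, upon grouping by the bubble content, produce nontrivial relations among bubble-free normally ordered diagrams $d:\up^r\to\up^r$ (with coefficients in the polynomial ring $\k[\Delta_1,\Delta_3,\dots]$, which is a domain). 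So it suffices to show: the equivalence classes of normally ordered bubble-free diagrams $d:\up^r\to\up^r$ are linearly independent over $\k$ in $\End_\AOBC(\up^r)$.

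For this last statement I would evaluate a putative relation $\sum_d \lambda_d\, d = 0$ under $\Psi_M$ on the vector $v_r\otimes\cdots\otimes v_1\otimes\hw\in V^{\otimes r}\otimes M$, and look at the associated graded picture. By \cref{top component lemma}, the top-degree component of $\Psi_M(d)(v_r\otimes\cdots\otimes v_1\otimes\hw)$ is $v(\undot d)\otimes \hw\, h_r^{\beta_r(d)}\cdots h_1^{\beta_1(d)}$, where $\undot d$ is the underlying (dot-free) oriented Brauer-Clifford diagram and $\beta_k(d)$ records the number of $\fulldot$'s on the $k$th strand. The plan is to pick $n$ (hence the rank of $\fq$) large relative to $r$ and the degrees appearing, so that: (i) the monomials $h_r^{\beta_r}\cdots h_1^{\beta_1}$ in $U(\fh_{\0})$ are linearly independent (which they are, it being a genuine polynomial ring), separating diagrams with distinct dot-vectors $(\beta_r,\dots,\beta_1)$; and (ii) the vectors $v(\undot d)\in V^{\otimes r}$, as $\undot d$ ranges over the (finitely many) equivalence classes of normally ordered bubble-free oriented Brauer-Clifford diagrams of type $\up^r\to\up^r$ appearing, are linearly independent in $V^{\otimes r}$ — this is exactly the injectivity of $\Phi$ on $\End_\OBC(\up^r)=\Ser_r$ for $r\le n$ recorded in \cref{R:JungKangResult} (equivalently, classical Schur-Weyl-Sergeev duality). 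Since $M$ is a free right $U(\fh_{\0})$-module and $V^{\otimes r}\otimes M$ is graded, comparing top-degree components of $\sum_d\lambda_d\Psi_M(d)(v_r\otimes\cdots\otimes v_1\otimes\hw)=0$ forces, for each fixed dot-vector $\beta$, the relation $\sum_{\undot d}(\text{sum of }\lambda_d)\,v(\undot d)=0$ in $V^{\otimes r}$; by (ii) and (i) all $\lambda_d$ vanish.

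The main obstacle is item (ii) together with making the top-degree extraction rigorous: one must be careful that the ``top degree'' of $\Psi_M(d)$ applied to the highest weight vector genuinely detects $\undot d$ via $\Phi$ and not some degenerate collapse — this is where \cref{xk calculation}, \cref{x1 calculation}, and the freeness of $M$ over $U(\fh_{\0})$ do the work, and where one must invoke the bound $r\le n$ so that $\Phi|_{\End_\OBC(\up^r)}$ is injective. A secondary technical point is verifying that $\Psi_M$ is indeed filtered of the right degree on all dotted diagrams with bubbles, but that has already been established just before \cref{SS:PsiM calculations} using \cref{Psi-M is filtered}. Once linear independence of the bubble-free normally ordered diagrams is in hand, the reduction in the previous paragraph finishes the proof of \cref{Basis theorem for affine Hecke-Clifford with bubbles}.
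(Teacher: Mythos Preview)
There is a genuine gap in your reduction to the bubble-free case. Grouping a relation as $\sum_d p_d(\Delta_1,\Delta_3,\ldots)\,d=0$ with the $d$'s bubble-free is fine, but then you need the $d$'s to be linearly independent \emph{over} $\End_\AOBC(\unit)$, not merely over $\k$; the fact that $\k[\Delta_1,\Delta_3,\ldots]$ is a domain does not bridge this. The paper does not attempt this reduction. Instead it applies $\Psi_M$ to the whole expression $\sum_d f_d(\Delta_1,\Delta_3,\ldots)\,d$ and exploits a separation-of-variables trick: by \cref{bubbles under Psi-M} each bubble monomial contributes (at top degree) a \emph{symmetric} polynomial in $h_n,\ldots,h_1$, while by \cref{top component lemma} the dots on the strands contribute the monomial $h_r^{\beta_r(d)}\cdots h_1^{\beta_1(d)}$. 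The crucial observation---absent from your proposal---is that if $n$ is taken large enough, the graded-lex leading monomial of the symmetric contribution involves only $h_n,\ldots,h_{r+1}$, so the products $g_d(h_n,\ldots,h_1)\,h_r^{\beta_r(d)}\cdots h_1^{\beta_1(d)}$ have pairwise distinct leading monomials as $d$ varies over a fixed $\undot d$. This is what replaces your attempted reduction.

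A second, smaller issue: your appeal to \cref{R:JungKangResult} is circular, since the paper explicitly notes that \cref{SS: mixed SWS duality} is not used in the proof of the basis theorems, and indeed that remark rests on \cref{C:OBCandSergeev} and hence on \cref{OBC basis theorem}, which is deduced \emph{from} \cref{AOBC basis theorem}. The fact you actually need is much more elementary: when $n\geq r$, each normally ordered oriented Brauer-Clifford diagram $\undot d:\up^r\to\up^r$ sends $v_r\otimes\cdots\otimes v_1$ under $\Phi$ to a nonzero scalar multiple of a distinct standard basis tensor of $V^{\otimes r}$ (determined by the permutation and the positions of the $\emptydot$'s), so the $v(\undot d)$'s are trivially linearly independent. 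The paper uses exactly this, implicitly, when it splits off the ``$w$'' term supported on $v(d)\neq v(\undot{d_0})$.
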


\begin{proof}
     By  \cref{lemma: normally ordered diagrams span} the proposed basis spans $\End_\AOBC(\up^r)$. Toward showing linear independence, note that any linear combination of normally ordered dotted oriented Brauer-Clifford diagrams with bubbles of type $\up^r\to \up^r$ can be written in the form 
    \begin{equation}\label{linear combo}
        \sum_{d}f_{d}(\Delta_1,\Delta_3,\Delta_5,\ldots)d,
    \end{equation}
    where the sum is over all normally ordered dotted oriented Brauer-Clifford diagrams $d:\up^r\to \up^r$ (\emph{without bubbles}); and where the $f_{d}$'s are polynomials in countably many variables, only finitely many of which are nonzero. Set $B=\{d \mid f_{d}\neq 0\}$.
 
    We will show \cref{linear combo} is nonzero whenever $B \neq \emptyset$ (completing the proof of the theorem) by showing its image under $\Psi_M$ is nonzero whenever $n$ is sufficiently large. In turn, this will follow from the fact that, when we choose $n$ sufficiently large to ensure the relevant power sums are algebraically independent,  
    \begin{equation}\label{p(completes)}
        f_{d}(h_n+\cdots+h_1, h_n^3+\cdots+h_1^3,h_n^5+\cdots+h_1^5,\ldots)\neq 0
    \end{equation} for any $d\in B$. Recall from \cref{SS:polynomialring} that we have a graded lexicographic ordering on the monomials of $U (\fh )$. For the rest of the proof we assume $n$ is large enough so that for each $d\in B$ the leading monomial of the symmetric polynomial \cref{p(completes)} with respect to this ordering does not contain any of $h_r,\ldots,h_1$.  

    Given $d\in B$, it follows from  \cref{bubbles under Psi-M} that the top degree component of $\Psi_M(f_d(\Delta_1,\Delta_3,\ldots))(v_r\otimes\cdots\otimes v_1\otimes\hw)$ is of the form $v_r\otimes\cdots\otimes v_1\otimes\hw g_d(h_n,\ldots,h_1)$ where $g_d(h_n,\ldots,h_1)$ is some homogeneous symmetric polynomial. Fix $d_0\in B$ with 
    \begin{equation}\label{d0 condition}
        \deg d_0 + \deg g_{d_0}\geq \deg d+\deg g_{d}\text{ for all }d\in B.
    \end{equation} 
    Set 
    \[
    B_0=\{d\in B \mid \undot{d}=\undot{d_0}\text{ and }\deg d_0 + \deg g_{d_0}=\deg d+\deg g_{d}\}.
    \]
    It follows from \cref{d0 condition} and  \cref{top component lemma} that the top degree component of the image of $v_r\otimes\cdots\otimes v_1\otimes\hw$ under $\Psi_M(\sum_{d}f_d(\Delta_1,\Delta_3,\Delta_5,\ldots)d)$ is of the form 
    \begin{equation}\label{top degree component under Psi_M}
        \sum_{d\in B_0} v(\undot{d_0})\otimes\hw g_d(h_n,\ldots,h_1) h_r^{\beta_r(d)}\cdots h_1^{\beta_1(d)}+w,
    \end{equation}
    where $w$ lies in the $U(\fh)$-span of the basis elements of the form $v(d)\otimes u$ with $v(d)\neq v(\undot{d_0})$. Recall that, since $n$ was chosen sufficiently large, the elements $h_r,\ldots,h_1$ do not appear in leading monomials of each $g_d(h_n,\ldots,h_1)$. Therefore, since a diagram $d\in B_0$ is completely determined by $\beta_1(d),\ldots,\beta_r(d)$, it follows that the leading monomials of $g_d(h_n,\ldots,h_1) h_r^{\beta_r(d)}\cdots h_1^{\beta_1(d)}$ for $d\in B_0$ are pairwise distinct. Thus \cref{top degree component under Psi_M} is nonzero, which implies \cref{linear combo} is nonzero, as desired. 
\end{proof}

\subsection{Proof of Theorem \ref{AOBC basis theorem}} 
It is straightforward to see that the validity of \cref{AOBC basis theorem} when $\k$ has characteristic zero is equivalent to the following lemma.

\begin{lemma}\label{Basis for finite AOBC}     
    Assume $\k$ has characteristic zero.  Then for any $\ob a,\ob b\in\wrd$ the superspace $\Hom_\AOBC(\ob a,\ob b)_{\leq k}$ has basis given by equivalence classes of normally ordered dotted oriented Brauer-Clifford diagrams with bubbles of type $\ob a\to\ob b$ with at most $k$ $\fulldot$'s. 
\end{lemma}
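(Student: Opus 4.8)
The plan is to reduce the general Hom-space statement to the already-proven endomorphism case \cref{Basis theorem for affine Hecke-Clifford with bubbles} via the standard ``straightening'' trick available in a rigid symmetric monoidal supercategory. First I would record that, by \cref{lemma: normally ordered diagrams span}, the proposed set already spans $\Hom_\AOBC(\ob a,\ob b)_{\leq k}$, so only linear independence remains. Because $\AOBC$ is rigid and symmetric (inheriting this from $\OBC$, as noted after \cref{prop: OBC slides}), for any $\ob a,\ob b\in\wrd$ there is a $\k$-linear isomorphism
\[
\Hom_\AOBC(\ob a,\ob b)\;\xrightarrow{\ \sim\ }\;\Hom_\AOBC(\down^{s}\up^{r},\up^{r'}\down^{s'})\;\xrightarrow{\ \sim\ }\;\Hom_\AOBC(\unit,\up^{r'}\down^{r}\cdots)\;\xrightarrow{\ \sim\ }\;\Hom_\AOBC(\up^{m},\up^{m})
\]
obtained exactly as in the proof of \cref{T:MixedSchurWeylDuality} (see \cref{reduce Hom to End}): first apply suitable symmetric braidings to sort the $\up$'s and $\down$'s, then bend all the $\down$-strands around using $\coev$ and $\ev$ to land in $\End_\AOBC(\up^{m})$ for an appropriate $m$ (one may take $m=r+s'=r'+s$, after first checking as in the proof of \cref{T:MixedSchurWeylDuality} that both sides vanish unless $r+s'=r'+s$, though here one does not even need the vanishing since one may just allow the bent-around picture to be in a Hom-space between different powers of $\up$ and then bend a little more). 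These braiding and duality isomorphisms are isomorphisms of filtered superspaces, i.e.\ they preserve the number of $\fulldot$'s, so they restrict to isomorphisms on the filtered pieces $(-)_{\leq k}$.

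The key step is then to verify that this chain of isomorphisms carries the set of equivalence classes of normally ordered dotted oriented Brauer-Clifford diagrams with bubbles of type $\ob a\to\ob b$ \emph{bijectively} onto the analogous set for $\up^{m}\to\up^{m}$. This is where I expect the only real bookkeeping to live. Applying a braiding and then bending strands around with cups and caps need not produce a normally ordered diagram on the nose; but using \cref{prop: OBC slides}, \cref{prop: AOBC slides}, \cref{reduce two down Cliffords}, \cref{down black white dot}, and \cref{prop: bubble reducing} one can always renormalize the result, up to sign, to a unique normally ordered diagram with the same underlying (bubble-decorated) oriented Brauer diagram and the same $\emptydot$/$\fulldot$ counts on corresponding strands, exactly as in the proof of \cref{lemma: normally ordered diagrams span}. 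Since the straightening process changes neither $\deg(d)$ nor the parity (number of $\emptydot$'s mod $2$), and since the equivalence relation on normally ordered diagrams is defined purely in terms of the underlying oriented Brauer diagram with bubbles together with these strand-wise counts, the correspondence is a well-defined bijection at the level of equivalence classes, compatible with the filtration. I would spell this out by describing the inverse map (undo the cups/caps and braidings, then renormalize again), which makes the bijectivity transparent.

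Granting that bijection, linear independence follows formally: a nontrivial linear relation among normally ordered diagrams in $\Hom_\AOBC(\ob a,\ob b)_{\leq k}$ would map, under the filtered $\k$-linear isomorphism above, to a linear relation (with the same coefficients, up to the fixed signs introduced by straightening each basis element) among normally ordered diagrams in $\End_\AOBC(\up^{m})_{\leq k}$, contradicting \cref{Basis theorem for affine Hecke-Clifford with bubbles}. This proves \cref{Basis for finite AOBC}, and since \cref{Basis for finite AOBC} is visibly equivalent to \cref{AOBC basis theorem} in characteristic zero (the filtered pieces $\Hom_\AOBC(\ob a,\ob b)_{\leq k}$ exhaust $\Hom_\AOBC(\ob a,\ob b)$, and a set of diagrams is a basis of the whole space iff, for every $k$, those with at most $k$ $\fulldot$'s form a basis of the $k$th filtered piece), \cref{AOBC basis theorem} follows in characteristic zero. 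The main obstacle is thus not conceptual but the careful verification that straightening respects normal ordering and induces the claimed bijection on equivalence classes; the positive-characteristic (and general commutative ring) case is then handled separately in \cref{SS:PositiveCharacteristic} as indicated in the text.
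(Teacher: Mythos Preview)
Your strategy is the same as the paper's: reduce linear independence in $\Hom_\AOBC(\ob a,\ob b)_{\leq k}$ to the already-established case $\End_\AOBC(\up^m)_{\leq k}$ via the rigidity/braiding isomorphisms of \cref{reduce Hom to End}. The paper's execution is a bit cleaner, though. Rather than tracking where each normally ordered diagram goes under straightening, the paper simply observes that the proposed spanning set is finite, that the filtered isomorphism gives $\dim_\k\Hom_\AOBC(\ob a,\ob b)_{\leq k}=\dim_\k\End_\AOBC(\up^m)_{\leq k}$, and that a separate, purely combinatorial count (both diagram types have exactly $m$ strands) shows the two sets of normally ordered diagrams with at most $k$ $\fulldot$'s have the same cardinality. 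Spanning plus correct cardinality then forces a basis, with no need to analyze what straightening does to individual diagrams.

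Your version is correct in spirit but has a small wrinkle in the last step. Straightening a normally ordered diagram $d$ does not literally produce $\pm$(a normally ordered diagram); by \cref{prop: AOBC slides} and the argument of \cref{lemma: normally ordered diagrams span} it produces $\pm d' + (\text{terms of strictly lower degree})$. So a nontrivial relation $\sum c_d d=0$ does not map to a relation ``with the same coefficients'' among normally ordered diagrams in $\End_\AOBC(\up^m)$; you need a triangularity/associated-graded argument (pass to top degree, use that the leading terms $d'$ are pairwise distinct normally ordered diagrams, and invoke \cref{Basis theorem for affine Hecke-Clifford with bubbles}). This is easily patched, but the paper's dimension count avoids it entirely.
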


\begin{proof} 
    It follows from  \cref{lemma: normally ordered diagrams span} that the proposed basis spans $\Hom_\AOBC(\ob a, \ob b)_{\leq k}$. In particular, $\Hom_\AOBC(\ob a, \ob b)_{\leq k}$ is finite-dimensional over $\k$. Hence, it suffices to show the proposed basis has size $\dim_\k\Hom_\AOBC(\ob a, \ob b)_{\leq k}$. 
    Now, suppose $\ob a$ (resp.~$\ob b$) consists of $r_1$ (resp.~$r'_1$) $\up$'s and $r_2$ (resp.~$r'_2$) $\down$'s.  If $r_1+r_2'\not=r_1'+r_2$ then there are no oriented Brauer-Clifford diagrams of type $\ob a\to\ob b$, whence $\Hom_\AOBC(\ob a, \ob b)=0$. Thus, we may assume $r_1+r_2'=r_1'+r_2=:r$. 
    In this case we have $\k$-linear isomorphisms
    \begin{equation*}
        \Hom_\AOBC(\ob a, \ob b)_{\leq k} \to \Hom_\AOBC(\down^{r_2}\up^{r_1}, \up^{r'_1}\down^{r'_2})_{\leq k}\to\End_\AOBC(\up^r)_{\leq k}
    \end{equation*}
    defined on diagrams in the same manner as the top horizontal maps in \cref{reduce Hom to End}. 
    In particular, $\dim_\k\Hom_\AOBC(\ob a, \ob b)_{\leq k}=\dim_\k\End_\AOBC(\up^r)_{\leq k}$. On the other hand, there are precisely $r$ strands in any dotted oriented Brauer-Clifford diagram of type $\ob a\to\ob b$. It follows that there are the same number of normally ordered dotted oriented Brauer-Clifford diagrams with bubbles with at most $k$ $\fulldot$'s of type $\ob a\to\ob b$ as there are of type $\up^r\to \up^r$. Thus, the result follows from  \cref{Basis theorem for affine Hecke-Clifford with bubbles}. 
\end{proof}

\subsection{The positive characteristic case}\label{SS:PositiveCharacteristic}

We now explain how to deduce \cref{AOBC basis theorem} when $\k$ has positive characteristic and, more generally, is an arbitrary graded commutative $\Z_{2}$-graded ring of characteristic not two.  We first observe that the definitions given in \cref{SS: monoidal supercats} work equally well if $\k$ is replaced with an arbitrary graded commutative $\Z_{2}$-graded ring, $R$, and $\k$-superspaces are replaced with $\Z_{2}$-graded $R$-modules.  We refer to these as (monoidal) $R$-supercategories.  We define $\AOBC_{R}$ to be the monoidal $R$-supercategory given by the same generators and relations used in \cref{AOBC defn}.  For example, we have the integral form $\AOBC_{\Z}$ of the degenerate affine oriented Brauer-Clifford supercategory.

With the above in mind we have the following integral version of \cref{AOBC basis theorem}. 

\begin{theorem}\label{T:integralAOBC}  
    For any $\ob a,\ob b\in\wrd$ the $\Z$-supermodule $\Hom_{\AOBC_{\Z}}(\ob a,\ob b)$ is a free $\Z$-supermodule with basis given by equivalence classes of normally ordered dotted oriented Brauer-Clifford diagrams with bubbles of type $\ob a\to\ob b$.
\end{theorem}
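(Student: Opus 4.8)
The plan is to deduce \cref{T:integralAOBC} from the characteristic zero case \cref{AOBC basis theorem} by a standard base-change argument, using the fact that the spanning statement already works over an arbitrary ground ring while linear independence can be detected after tensoring with a field of characteristic zero. First I would observe that the entire discussion of \cref{subsection: OBC,N.O. diagrams span} --- in particular \cref{prop: OBC slides}, \cref{prop: AOBC slides}, \cref{prop: bubble reducing}, and \cref{lemma: normally ordered diagrams span} --- uses only the defining relations of $\AOBC$ and the hypothesis that $2$ is invertible, hence goes through verbatim with $\k$ replaced by $\Z$ once we check that the one place where we divided by $2$ (the derivation of \cref{dim equals 0} from the preceding computation, and the two invocations of $\chr\k\neq 2$ in the proof of \cref{prop: bubble reducing}) can be avoided over $\Z$. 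This requires a small remark: over $\Z$ the relevant computations show $2\cdot(\text{bubble})=0$ in the free $\Z$-module sense only after we already know the Hom-spaces are free, so instead I would run the argument over $\Q$ first to get $\cref{dim equals 0}$ there, and then handle $\Z$ by the universal-coefficients step below; alternatively one notes that $x = -x$ forces $2x=0$, and in a free $\Z$-module $2x = 0 \implies x = 0$, so once freeness is granted the bubble-vanishing relations are genuine consequences. Either way the conclusion is that over $\Z$ the normally ordered dotted oriented Brauer-Clifford diagrams with bubbles span $\Hom_{\AOBC_\Z}(\ob a, \ob b)$.

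Next I would set up the base-change comparison. For any commutative $\Z_2$-graded ring $R$ of characteristic not two there is a monoidal $R$-superfunctor $\AOBC_\Z \to \AOBC_R$ inducing natural maps $R \otimes_\Z \Hom_{\AOBC_\Z}(\ob a, \ob b) \to \Hom_{\AOBC_R}(\ob a, \ob b)$, and this map is surjective because both sides are spanned by the images of the same diagrams (this is the spanning statement of \cref{lemma: normally ordered diagrams span}, valid over any such ring). Applying this with $R = \Q$ (or $\C$) gives a surjection $\Q \otimes_\Z \Hom_{\AOBC_\Z}(\ob a, \ob b) \onto \Hom_{\AOBC_\Q}(\ob a, \ob b)$ which carries the diagram spanning set of the left side onto the diagram basis of the right side furnished by \cref{AOBC basis theorem}. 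Since a surjection of $\Q$-vector spaces sending a spanning set to a basis must be a bijection that is injective on that spanning set, the images in $\Q \otimes_\Z \Hom_{\AOBC_\Z}(\ob a, \ob b)$ of the normally ordered diagrams are linearly independent over $\Q$. But these images are exactly $1 \otimes$ (the diagram classes), so the diagram classes are linearly independent over $\Z$ inside $\Hom_{\AOBC_\Z}(\ob a, \ob b)$: any $\Z$-linear relation among them tensors up to a $\Q$-linear relation. Combined with the spanning statement this shows $\Hom_{\AOBC_\Z}(\ob a, \ob b)$ is a free $\Z$-module on the equivalence classes of normally ordered dotted oriented Brauer-Clifford diagrams with bubbles, which is \cref{T:integralAOBC}.

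Finally I would record the two easy corollaries implicit in \cref{SS:PositiveCharacteristic}: for arbitrary $R$ as above, applying $R \otimes_\Z -$ to the now-established free resolution $\Hom_{\AOBC_\Z}(\ob a, \ob b) \cong \bigoplus_{d} \Z\cdot d$ and using that the base-change map $R \otimes_\Z \Hom_{\AOBC_\Z}(\ob a, \ob b) \to \Hom_{\AOBC_R}(\ob a, \ob b)$ is surjective with the source free on the diagram classes shows it is in fact an isomorphism (surjectivity plus the fact that a generating family of the correct "size" in a free module over $R$ maps onto the image) --- hence $\Hom_{\AOBC_R}(\ob a, \ob b)$ is free over $R$ on the diagram classes, giving \cref{AOBC basis theorem} in full generality, and in particular in positive characteristic. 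The main obstacle, and the only subtle point, is the bookkeeping around the two uses of $\chr \k \neq 2$ over $\Z$: one must be careful to first establish the characteristic zero statement (where dividing by $2$ is legitimate) and only then push down to $\Z$ via universal coefficients, rather than trying to prove \cref{dim equals 0} directly over $\Z$ before freeness is known; once that logical ordering is respected, the rest is routine base change.
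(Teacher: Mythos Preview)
Your approach coincides with the paper's: argue that the diagrammatic reductions of \cref{N.O. diagrams span} give spanning over $\Z$, then deduce $\Z$-linear independence by mapping to $\AOBC_\C$ and invoking \cref{AOBC basis theorem}. The linear-independence half is fine and is exactly what the paper does.

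The problem is your treatment of the spanning half. You correctly isolate the one delicate point: the derivation of \cref{dim equals 0} and the proof of \cref{prop: bubble reducing} both pass from $2x=0$ to $x=0$, and over $\Z$ this implication is unavailable until one already knows the morphism spaces are torsion-free. But neither of your two proposed fixes actually delivers spanning over $\Z$. Running the argument over $\Q$ first yields the $\Q$-basis theorem, not the integral spanning statement that your next paragraph explicitly uses (``Combined with the spanning statement \dots''); and your alternative ``once freeness is granted the bubble-vanishing relations are genuine consequences'' is circular, as you yourself note. So the sentence ``Either way the conclusion is that over $\Z$ the normally ordered \dots\ diagrams span $\Hom_{\AOBC_\Z}(\ob a,\ob b)$'' is not justified by what precedes it. The paper handles this by simply asserting that the arguments of \cref{N.O. diagrams span} apply to $\AOBC_\Z$; you have located the sticking point but not supplied the missing step. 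For what it is worth, one can check that all of \cref{prop: AOBC slides} does hold integrally --- in particular the two $\Delta_0$ correction terms that appear in the proof of \cref{right cap/cup x-slide} cancel --- so the reduction $d=\pm d'+(\text{fewer }\fulldot\text{'s})$ in \cref{lemma: normally ordered diagrams span} is valid over $\Z$; what remains is precisely the vanishing of the ``bad'' bubbles in \cref{prop: bubble reducing}, for which the written arguments only give $2\cdot(\text{bubble})=0$.
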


\begin{proof} 
    Since the relations for $\AOBC$ involve only integral coefficients, the arguments given in \cref{N.O. diagrams span} apply to $\AOBC_{\Z}$ and so the normally ordered dotted oriented Brauer-Clifford diagrams with bubbles of type $\ob a\to\ob b$ span $\Hom_{\AOBC_{\Z}}(\ob{a}, \ob{b})$ as a $\Z$-supermodule.  On the other hand, consider a finite sum 
     \begin{equation}\label{integral linear combo}
            \sum_{d} f_{d}d,
        \end{equation}
     where the sum is over normally ordered dotted oriented Brauer-Clifford diagrams with bubbles of type $\ob{a} \to \ob{b}$ and where the $f_{d}$'s are integers. There is an obvious superfunctor $\AOBC_{\Z} \to \AOBC_{\C}$ which can be applied to \cref{integral linear combo} and linear independence follows from \cref{AOBC basis theorem}.
\end{proof}

Let $\k$ be a graded commutative $\Z_{2}$-graded ring.  If $\mathcal{C}_{\Z}$ denotes a (monoidal) $\Z$-supercategory, then by base change one can define a (monoidal) $\k$-supercategory $\mathcal{C}_{\Z} \otimes \k$.  Namely, the objects of $\mathcal{C}_{\Z}\otimes \k$ are the objects of $\mathcal{C}_{\Z}$ and the morphisms are
\[
\Hom_{\mathcal{C}_{\Z}\otimes \k}(\ob{a},\ob{b}) = \Hom_{\mathcal{C}_{\Z}}(\ob{a},\ob{b}) \otimes_{\Z} \k.
\]  Composition, the monoidal structure, etc., are extended to $\mathcal{C}_{\Z}\otimes \k$ by linearity.   There are obvious mutually inverse superfunctors which provide an isomorphism of monoidal supercategories between $\AOBC_{\Z} \otimes \k$ and $\AOBC_{\k}$.  The previous theorem and base change immediately implies the following result.

\begin{corollary}\label{C:postivecharacteristicAOBCbasis} 
    Let $\k$ be an arbitrary graded commutative $\Z_{2}$-graded ring of characteristic not two. For any $\ob a,\ob b\in\wrd$ the $\k$-supermodule $\Hom_{\AOBC_{\k}}(\ob a,\ob b)$ is a free $\k$-supermodule with basis given by equivalence classes of normally ordered dotted oriented Brauer-Clifford diagrams with bubbles of type $\ob a\to\ob b$.
\end{corollary}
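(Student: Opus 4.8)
The plan is to obtain \cref{C:postivecharacteristicAOBCbasis} as an immediate consequence of \cref{T:integralAOBC} by base change, exactly as the sentence preceding the corollary in the excerpt already indicates. So the proof is short: there is essentially no new mathematical content beyond setting up the base-change formalism correctly and invoking the integral result.

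First I would recall the base-change construction: given the $\Z$-supercategory $\AOBC_{\Z}$, one forms $\AOBC_{\Z}\otimes\k$ with the same objects and with $\Hom_{\AOBC_{\Z}\otimes\k}(\ob a,\ob b)=\Hom_{\AOBC_{\Z}}(\ob a,\ob b)\otimes_{\Z}\k$, extending composition, tensor product, and the coherence data $\Z$-bilinearly. Then I would note the isomorphism of monoidal $\k$-supercategories $\AOBC_{\Z}\otimes\k\cong\AOBC_{\k}$: both are presented by the same generators and the same relations (all of which have integer coefficients), so the universal property of the presentation gives mutually inverse monoidal superfunctors. This is the only point requiring a word of care — one must check that the relations defining $\AOBC$ really do have coefficients in $\Z$ (they do: inspect \cref{OB relations 1 (symmetric group)}, \cref{OB relations 2 (zigzags and invertibility)}, \cref{OBC relations}, \cref{AOBC relations}), so that ``impose the same relations over $\k$'' and ``impose the relations over $\Z$, then tensor with $\k$'' agree.

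Given this, the argument concludes: by \cref{T:integralAOBC}, $\Hom_{\AOBC_{\Z}}(\ob a,\ob b)$ is a free $\Z$-module with basis the equivalence classes of normally ordered dotted oriented Brauer-Clifford diagrams with bubbles of type $\ob a\to\ob b$. Tensoring a free $\Z$-module over $\Z$ with $\k$ yields a free $\k$-module on the image of any $\Z$-basis, so $\Hom_{\AOBC_{\k}}(\ob a,\ob b)\cong\Hom_{\AOBC_{\Z}}(\ob a,\ob b)\otimes_{\Z}\k$ is free over $\k$ on the same indexing set of diagrams. One should also observe that the $\Z_2$-grading is respected throughout — the base change is a base change of $\Z_2$-graded abelian groups, and each basis diagram is homogeneous (its parity is the parity of its number of $\emptydot$'s), so freeness holds in the graded (super) sense as well.

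The main (very mild) obstacle is not an obstacle to the mathematics but to the bookkeeping: making sure the hypothesis ``$\k$ of characteristic not two'' is actually used and actually suffices. It is used only insofar as \cref{T:integralAOBC} is an \emph{integral} statement with no characteristic hypothesis, while the route through $\mathbb{C}$ inside the proof of \cref{T:integralAOBC} already handled the $\Z$-freeness; base change then transports freeness to \emph{any} graded commutative $\Z_2$-graded ring $\k$, and the characteristic-not-two hypothesis is only inherited as the standing convention under which $\AOBC_{\k}$ was defined (it enters earlier, e.g. in deriving $\cref{dim equals 0}$ from $\cref{OBC relations}$, which is part of the spanning argument in \cref{N.O. diagrams span} that also applies over $\Z$). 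So I would phrase the corollary's proof as: ``By the base-change discussion above, $\AOBC_{\Z}\otimes\k\cong\AOBC_{\k}$ as monoidal $\k$-supercategories; hence $\Hom_{\AOBC_{\k}}(\ob a,\ob b)\cong\Hom_{\AOBC_{\Z}}(\ob a,\ob b)\otimes_{\Z}\k$, which by \cref{T:integralAOBC} is free as a $\k$-supermodule on the stated diagram basis.''
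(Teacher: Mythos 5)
Your proposal is correct and is essentially identical to the paper's own argument: the paper likewise constructs $\AOBC_{\Z}\otimes\k$, observes the obvious mutually inverse superfunctors giving $\AOBC_{\Z}\otimes\k\cong\AOBC_{\k}$, and deduces the corollary from \cref{T:integralAOBC} by base change. Your extra remarks on the integrality of the defining relations and on homogeneity of the basis diagrams are exactly the (routine) checks the paper leaves implicit.
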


We mention one other application of our basis theorem.
\begin{corollary}\label{C:degenerateaffineHeckeCliffordsuperalgebraIsom}  
    Let $\k$ be a field of characteristic not two.  The subsuperalgebra of $\End_{\AOBC}(\up^r)$ consisting of linear combinations of dotted oriented Brauer-Clifford diagrams without bubbles is isomorphic to the \emph{degenerate affine Sergeev superalgebra}\footnote{Also known as the degenerate affine Hecke-Clifford superalgebra.} $\AS_{r}$ introduced in \cite[Section 3]{Naz}.  
\end{corollary}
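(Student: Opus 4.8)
The plan is to present $\AS_r$ by generators and relations, construct a surjection onto the claimed subsuperalgebra, and then promote it to an isomorphism using the basis theorem.

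Recall from \cite[Section 3]{Naz} that $\AS_r$ is the superalgebra generated by even elements $s_1,\dots,s_{r-1}$, $x_1,\dots,x_r$ and odd elements $c_1,\dots,c_r$ subject to: the defining relations of the Sergeev superalgebra $\Ser_r$ among $s_1,\dots,s_{r-1},c_1,\dots,c_r$ recalled in the proof of \cref{C:OBCandSergeev} (in particular $s_ic_i=c_{i+1}s_i$); the relations $x_ix_j=x_jx_i$, $x_ic_j=c_jx_i$ for $i\neq j$, $x_ic_i=-c_ix_i$, and $s_ix_j=x_js_i$, $s_ic_j=c_js_i$ for $j\notin\{i,i{+}1\}$; and the mixed relation $s_ix_i-x_{i+1}s_i=-1-c_ic_{i+1}$ (up to whatever sign normalisation is in \cite{Naz}). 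First I would define $\rho\colon\AS_r\to\End_\AOBC(\up^r)$ on generators by sending $s_i$ to the crossing of the $i$th and $(i{+}1)$st strands, $c_i$ to a single $\emptydot$ near the bottom of the $i$th strand, and $x_i$ to a single $\fulldot$ near the bottom of the $i$th strand, in each case padded by identity strands to reach $\up^r$, exactly as in the formulas of \cref{C:OBCandSergeev}. That $\rho$ is a homomorphism of superalgebras is checked relation by relation: the $\Ser_r$-relations hold by the proof of \cref{C:OBCandSergeev} together with the monoidal superfunctor $\OBC\to\AOBC$ of \cref{Consequences of main result}; the relations $x_ic_i=-c_ix_i$ and $s_ix_i-x_{i+1}s_i=-1-c_ic_{i+1}$ are precisely the left and right relations of \cref{AOBC relations}; and the remaining commuting relations follow from the super-interchange law \cref{super-interchange}, using crucially that $\fulldot$ is even, so that no signs intervene. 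If \cite{Naz} normalises the $c_i$ to square to $-1$ and tensors on the opposite side from \cite{HKS}, one rescales $\rho(c_i)$ by $\sqrt{-1}$ exactly as in \cref{C:EndOBCisomorphictoWalledBrauerCliffordAlgebra}.

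Next I would identify the image. Let $A_r$ denote the $\k$-span of all dotted oriented Brauer--Clifford diagrams of type $\up^r\to\up^r$ having no bubbles. Since the source and target words both equal $\up^r$, such a diagram can contain no cup, cap, or closed component, so it is a permutation diagram decorated with $\emptydot$'s and $\fulldot$'s; stacking two of these again produces such a diagram, and the relations needed to reduce it to normal form --- namely \cref{OB relations 1 (symmetric group)}, \cref{OBC relations}, \cref{AOBC relations}, and the up--up cases \cref{up cross c-slide} and \cref{up cross x-slide} of \cref{prop: OBC slides} and \cref{prop: AOBC slides} --- never create a cup, cap, or bubble. Hence $A_r$ is a unital subsuperalgebra, and it is visibly the one named in the statement. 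Each generator $\rho(s_i),\rho(c_i),\rho(x_i)$ lies in $A_r$; conversely any normally ordered bubble-free diagram $d\colon\up^r\to\up^r$ equals, up to sign, $\undot{d}\circ x_r^{\beta_r(d)}\circ\cdots\circ x_1^{\beta_1(d)}$ with $\undot{d}$ a normally ordered oriented Brauer--Clifford diagram, and $\undot{d}$ lies in the image under $\rho$ of the Sergeev subalgebra of $\AS_r$ by \cref{C:OBCandSergeev}. Thus $\rho$ is surjective onto $A_r$.

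To finish, I would prove injectivity by a filtration argument. Filter $\AS_r$ by total degree in the $x_i$'s and $\AOBC$ by number of $\fulldot$'s (\cref{section: AOBC}), so that $\rho$ is a filtered map. On the associated graded, $\fulldot$'s slide freely through crossings and past $\emptydot$'s --- the correction terms in \cref{up cross x-slide} and in the defining relations all strictly decrease the $\fulldot$-count --- so $\gr\rho$ carries the class of a PBW monomial $x_1^{a_1}\cdots x_r^{a_r}c_1^{\ep_1}\cdots c_r^{\ep_r}w$ to a unit times the class of the corresponding normal-form diagram, which together with \cref{C:OBCandSergeev} in degree zero is a bijection on basis elements. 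A dimension count in each filtered degree --- the PBW basis of $\AS_r$ and the basis of $A_r$ furnished by \cref{AOBC basis theorem} (or \cref{C:postivecharacteristicAOBCbasis}, \cref{Basis for finite AOBC} over a general field of characteristic not two) are indexed by the same finite set in each degree --- then shows $\gr\rho$ is an isomorphism, hence so is $\rho$. The main obstacle is purely bookkeeping: pinning down Nazarov's sign conventions so the Clifford and mixed relations match \cref{AOBC relations} on the nose, and tracking the induced permutation of the $(\beta_i;\ep_i;w)$ data so that $\gr\rho$ visibly matches PBW monomials with normally ordered diagrams; neither is conceptually difficult given \cref{top component lemma} and the results already in hand.
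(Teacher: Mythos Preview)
Your proposal is correct and follows essentially the same approach as the paper: define the map on generators, check the defining relations of $\AS_r$ against \cref{OB relations 1 (symmetric group)}, \cref{OBC relations}, \cref{AOBC relations}, and the super-interchange law, identify the image as the span of bubble-free diagrams, and conclude by matching the PBW basis of $\AS_r$ against the normally ordered diagram basis coming from \cref{C:postivecharacteristicAOBCbasis}. The paper compresses your surjectivity and filtration arguments into a single sentence (``one can verify that this map is an isomorphism onto its image''), but the underlying logic is the same; your filtered/associated-graded bookkeeping is a perfectly good way to make that sentence precise.
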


\begin{proof} 
    The superalgebra $\AS_{r}$ has a presentation with even generators $s_{1}, \dotsc , s_{r-1}$, $x_{1}, \dotsc , x_{r}$, and odd generators $c_{1}, \dotsc , c_{r}$ subject to the relations (for all admissible $i,j$):
    \begin{enumerate}
    \item $s_{i}^{2}=1$, $s_{i}s_{j}=s_{j}s_{i}$ when $|i-j| >1$, $s_{i}s_{i+1}s_{i}=s_{i+1}s_{i}s_{i+1}$;
    \item $c_{i}^{2}=1$, $c_{i}c_{j}=-c_{j}c_{i}$ when $i\neq j$;
    \item $x_{i}x_{j}=x_{j}x_{i}$;
    \item $c_{i}x_{i}=-x_{i}c_{i}$, $c_{i}x_{j}=x_{j}c_{i}$ when $i \neq j$;
    \item $s_{i}x_{i}=x_{i+1}s_{i}-1-c_{i}c_{i+1}$.
    \end{enumerate}

    By checking relations we see that the following defines a superalgebra homomorphism 
    $\nu:\AS_{r} \to \End_{\AOBC}\left(\up^{r} \right)$.
    \begin{align*}
        \nu\left(s_{i} \right) &= 
                \begin{tikzpicture}[baseline = 7.5pt, scale=0.5, color=\clr]
                    \draw[->,thick] (0,0) to[out=up, in=down] (0,1.5);
                \end{tikzpicture}^{r-i-1}
                  \begin{tikzpicture}[baseline = 7.5pt, scale=0.5, color=\clr]
                    \draw[->,thick] (0,0) to[out=up, in=down] (1,1.5);
                    \draw[->,thick] (1,0) to[out=up, in=down] (0,1.5);
                \end{tikzpicture} \;\;
                \begin{tikzpicture}[baseline = 7.5pt, scale=0.5, color=\clr]
                    \draw[->,thick] (0,0) to[out=up, in=down] (0,1.5);
                \end{tikzpicture}^{i-1}, &
        \nu\left(c_{i} \right) &=  \begin{tikzpicture}[baseline = 7.5pt, scale=0.5, color=\clr]
                    \draw[->,thick] (0,0) to[out=up, in=down] (0,1.5);
                \end{tikzpicture}^{r-i}
                  \begin{tikzpicture}[baseline = 7.5pt, scale=0.5, color=\clr]
                    \draw[->,thick] (0,0) to[out=up, in=down] (0,1.5);
                    \draw (0,0.7) \wdot;
                \end{tikzpicture} \;\;
                \begin{tikzpicture}[baseline = 7.5pt, scale=0.5, color=\clr]
                    \draw[->,thick] (0,0) to[out=up, in=down] (0,1.5);
                \end{tikzpicture}^{i-1}, &
        \nu\left(x_{i} \right) &=  \begin{tikzpicture}[baseline = 7.5pt, scale=0.5, color=\clr]
                    \draw[->,thick] (0,0) to[out=up, in=down] (0,1.5);
                \end{tikzpicture}^{r-i}
                  \begin{tikzpicture}[baseline = 7.5pt, scale=0.5, color=\clr]
                    \draw[->,thick] (0,0) to[out=up, in=down] (0,1.5);
                    \draw (0,0.7) \bdot;
                \end{tikzpicture} \;\;
                \begin{tikzpicture}[baseline = 7.5pt, scale=0.5, color=\clr]
                    \draw[->,thick] (0,0) to[out=up, in=down] (0,1.5);
                \end{tikzpicture}^{i-1}.
    \end{align*} Take note that this map follows our convention of numbering strands from right-to-left.  The image of this map is the subsuperalgebra of $\End_{\AOBC}\left(\up^{r} \right)$ spanned by the dotted oriented Brauer-Clifford diagrams without bubbles.  From \cref{C:postivecharacteristicAOBCbasis} and the PBW-type basis for $\AS_{r}$ given in \cite[Theorem 14.2.2]{KleshBook} one can verify that this map is an isomorphism onto its image.
\end{proof}

\section{Cyclotomic Quotients} \label{S:CyclotomicQuotients}

Fix $a,b\in\Z_{\geq 0}$ and $m_i\in\k$ for each $1\leq i\leq a$. Let $f(t)=t^b\prod_{1\leq i\leq a}(t^2-m_i)$,  $\ell=2a+b$, and $ \OBC^f$ be as in \cref{SS:Cyclotomic quotients}.

\subsection{Bases for cyclotomic quotients}\label{SS:cyclotomic bases theorems}
Since $\OBC^f$ is a quotient of $\AOBC$, we may interpret any dotted oriented Brauer-Clifford diagram with bubbles as a morphism in $\OBC^f$. 

\begin{theorem}\label{Cyclotomic basis conjecture}
    For any $\ob a,\ob b\in\wrd$ the superspace $\Hom_{\OBC^f}(\ob a,\ob b)$ has basis given by equivalence classes of normally ordered dotted oriented Brauer-Clifford diagrams with bubbles of type $\ob a\to\ob b$ with fewer than $\ell$ $\fulldot$'s on each strand.
\end{theorem}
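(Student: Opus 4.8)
\textbf{Proof strategy for Theorem~\ref{Cyclotomic basis conjecture}.} The plan is to run an argument parallel to the proof of \cref{AOBC basis theorem}: first show that the proposed diagrams span, then show linear independence by evaluating on a suitable module. For spanning, I would start from the fact (from \cref{lemma: normally ordered diagrams span}) that normally ordered dotted oriented Brauer-Clifford diagrams with bubbles span $\Hom_\AOBC(\ob a,\ob b)$, hence their images span $\Hom_{\OBC^f}(\ob a,\ob b)$. The new ingredient is the cyclotomic relation $f(\xdot)=0$ imposed at the left end of each diagram (as a left tensor ideal). Using \cref{prop: AOBC slides} one can slide $\fulldot$'s freely past crossings, cups, and caps (up to lower-order terms), so any $\fulldot$ on an inward-pointing boundary segment can be pushed to the leftmost strand at the bottom, where the relation $f(\xdot)=0$ lets us rewrite $\ell$ or more dots on that strand in terms of fewer dots plus lower-order corrections. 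An induction on total dot-degree $\deg(d)$, exactly as in \cref{lemma: normally ordered diagrams span}, then reduces every diagram to a linear combination of normally ordered ones with at most $\ell-1$ dots on each strand. One subtlety: bubbles carry odd numbers of $\fulldot$'s and live at the right edge, not the left, so one must check the cyclotomic ideal still cuts bubble-degrees down to $<\ell$; this should follow by first sliding a bubble's dots onto a through-strand via the computations in \cref{R:clockwise bubbles} and \cref{prop: bubble reducing}, or by a direct bubble-slide argument, reducing to the boundary-segment case.

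\textbf{Linear independence.} For the hard direction I would reduce, as in \cref{Basis for finite AOBC}, to the endomorphism algebra $\End_{\OBC^f}(\up^r)$ by applying symmetric braidings, and then exhibit an explicit module on which the images of distinct normally ordered diagrams act by linearly independent operators. The natural candidate is a cyclotomic analogue of the generic Verma supermodule from \cref{S:GenericVerma}: one wants a $\fq(n)$-supermodule (or a tower of such, for $n$ large) built so that the action of $\Psi(\xdot)$ --- the Casimir $\Omega$ --- satisfies a polynomial of degree $\ell$ matching $f$, while retaining enough of the ``generic'' behavior of $M$ that \cref{top component lemma} (the statement that the top-degree component of $\Psi_M(d)(v_r\otimes\cdots\otimes v_1\otimes\hat u)$ recovers $v(\undot d)\otimes\hat u\, h_r^{\beta_r(d)}\cdots h_1^{\beta_1(d)}$) still detects the exponents $\beta_k(d)$. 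Concretely, one would tensor $V^{\otimes r}$ with a parabolic Verma module whose highest-weight data is chosen so that the eigenvalues of $\Omega$ on the relevant weight spaces are the roots of $f$; the filtered/graded machinery of \cref{subsection: graded and filtered super-cats} then applies verbatim, and the same leading-monomial argument (using that the power sums $h_n^k+\cdots+h_1^k$ are algebraically independent for $n\gg 0$, restricted now to exponents $<\ell$) shows the leading monomials of $g_d\cdot h_r^{\beta_r(d)}\cdots h_1^{\beta_1(d)}$ are pairwise distinct over the competing diagrams. Finally, the characteristic-zero restriction is removed exactly as in \cref{SS:PositiveCharacteristic}: the cyclotomic relations have integral (indeed polynomial-in-the-$m_i$) coefficients, so one works over $\Z[m_1,\dots,m_a]$ and base-changes, using the spanning result to deduce freeness of the integral form from linear independence over $\C$.

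\textbf{The main obstacle.} The delicate point is constructing the right cyclotomic test module. In the affine case the generic Verma $M$ was genuinely generic --- the $h_i$ acted by algebraically independent scalars --- and this is precisely what made the leading-monomial count work; imposing $f(\Omega)=0$ constrains those scalars, so one must check that after the constraint there is still enough freedom (for $n$ large, and perhaps summing over several weights) to separate all diagrams with $\le \ell-1$ dots per strand. Equivalently, one needs the joint eigenvalue data of the commuting operators $\Psi(x_1),\dots,\Psi(x_r)$ to realize every $r$-tuple in $\{0,1,\dots,\ell-1\}^r$ of ``dot-exponent profiles'' distinctly; this is a concrete but non-trivial representation-theoretic input, and verifying it --- most likely by choosing the highest weight of the parabolic Verma module so that the $\ell$ Casimir eigenvalues are the roots of $f$ and are ``in general position'' relative to the $h_i$-grading --- is where the real work lies. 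An alternative that sidesteps module-building entirely would be to adapt Brundan's proof referenced in \cref{S:cyclobasis}; but the approach above, in the spirit of our proof of \cref{AOBC basis theorem}, is the one I would carry out.
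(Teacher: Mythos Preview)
Your proposal takes a genuinely different route from the paper's proof, and the critical step you flag as ``the main obstacle'' is indeed a real gap rather than a routine verification.

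\textbf{On spanning.} Your argument is essentially right, but note a directional slip: the cyclotomic relation lives in a \emph{left} tensor ideal, so tensoring $f(\xdot)$ on the left by identity strands places the relation on the \emph{rightmost} strand. Dots must therefore be slid to the right edge (as the paper does in \cref{SS:cyclotomic bases theorems}), not to the leftmost strand. Your treatment of bubbles is also too vague: the bubbles do not straightforwardly reduce by sliding their dots onto a through-strand; one needs a recursion relating $\Delta_{2r-1}$ for $r > \lfloor \ell/2 \rfloor$ to lower bubbles, which comes from closing up $f(\xdot)$ into a bubble.

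\textbf{On linear independence.} The paper does \emph{not} construct any cyclotomic test module. Instead it introduces an auxiliary polynomial $f'(u)$ with indeterminate coefficients over $\K = \k[z_1',\dots,z_{\lfloor \ell/2\rfloor}']$, defines a two-parameter quotient $\OBC^{f,f'}$, and proves (\cref{l2}) that $\End_{\OBC^{f,f'}}(\up^{\otimes n})$ is isomorphic to the cyclotomic Sergeev superalgebra $\Serg_n^f$, whose basis theorem is already known from \cite{BK3}. The general case then follows by duality and straightening, and $\OBC^f$ is recovered as a base change. No representation theory of $\fq(n)$ enters at all.

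Your module-theoretic plan, by contrast, faces a genuine tension you do not resolve: the leading-monomial argument of \cref{top component lemma} requires the $h_i$ to act as algebraically independent parameters, yet forcing $f(\Omega)=0$ on $V\otimes M$ would (at top degree) force $f(h_i)=0$ for each $i$, destroying exactly that independence. One can work around this --- the GRSS proof mentioned in \cref{SS:Cyclotomic quotients} does carry out a module-theoretic argument --- but it requires a more elaborate setup (typically a tensor product or direct sum of $\ell$ carefully chosen modules so that the Jucys--Murphy-type elements have $\ell$ distinct ``generic'' eigenvalues) rather than a single parabolic Verma. As written, your sketch identifies the obstacle but does not overcome it, so the proof is incomplete at its decisive step.
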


A proof of this theorem can be found in \cref{S:cyclobasis}. However, 
it is easy to show the proposed basis in  \cref{Cyclotomic basis conjecture} spans the appropriate Hom-space. Indeed, any dotted oriented Brauer-Clifford diagram with bubbles having $\ell$ $\fulldot$'s on one of its strands can be realized as a linear combination of diagrams with fewer total $\fulldot$'s by using  \cref{prop: AOBC slides} to slide those $\ell$ $\fulldot$'s to the right of the picture, and then reducing as prescribed by $f$. For example, the following holds in $\OBC^f$ when $f(t)=t$:
\begin{equation*}
    \begin{tikzpicture}[baseline = 12pt, scale=0.5, color=\clr]
        \draw[<-,thick] (0,0) to[out=up,in=down] (1,2);
        \draw[->,thick] (1,0) to[out=up,in=down] (0,2);
        \draw[->,thick] (2,0) to[out=up,in=down] (2,2);
        \draw (0.85,0.5) \bdot;
        \draw (0.1,0.5) \wdot;
    \end{tikzpicture}
    ~=~
    \begin{tikzpicture}[baseline = 12pt, scale=0.5, color=\clr]
        \draw[<-,thick] (0,0) to[out=up,in=down] (1,2);
        \draw[->,thick] (1,0) to[out=up,in=down] (2,1) to[out=up,in=down] (0,2);
        \draw[->,thick] (2,0) to[out=up,in=down] (1.5,1) to[out=up,in=down] (2,2);
        \draw (1.25,1.5) \bdot;
        \draw (0.1,0.5) \wdot;
    \end{tikzpicture}
    ~=~
    \begin{tikzpicture}[baseline = 12pt, scale=0.5, color=\clr]
        \draw[<-,thick] (0,0) to[out=up,in=down] (1,2);
        \draw[->,thick] (1,0) to[out=up,in=down] (2,1) to[out=up,in=down] (0,2);
        \draw[->,thick] (2,0) to[out=up,in=down] (1.5,1) to[out=up,in=down] (2,2);
        \draw (2,1) \bdot;
        \draw (0.1,0.5) \wdot;
    \end{tikzpicture}
    ~+~
    \begin{tikzpicture}[baseline = 12pt, scale=0.5, color=\clr]
        \draw[<-,thick] (0,0) to[out=up,in=down] (1,2);
        \draw[->,thick] (1,0) to[out=up,in=down] (2,1) to[out=up,in=down] (2,2);
        \draw[->,thick] (2,0) to[out=up,in=down] (1.5,1) to[out=up,in=down] (0,2);
        \draw (0.1,0.5) \wdot;
    \end{tikzpicture}
    ~-~
    \begin{tikzpicture}[baseline = 12pt, scale=0.5, color=\clr]
        \draw[<-,thick] (0,0) to[out=up,in=down] (1,2);
        \draw[->,thick] (1,0) to[out=up,in=down] (2,1) to[out=up,in=down] (2,2);
        \draw[->,thick] (2,0) to[out=up,in=down] (1.5,1) to[out=up,in=down] (0,2);
        \draw (0.1,0.5) \wdot;
        \draw (1.5,1) \wdot;
        \draw (2,1) \wdot;
    \end{tikzpicture}
    ~=~
    \begin{tikzpicture}[baseline = 12pt, scale=0.5, color=\clr]
        \draw[<-,thick] (0,0) to[out=up,in=down] (1,2);
        \draw[->,thick] (1,0) to[out=up,in=down] (2,2);
        \draw[->,thick] (2,0) to[out=up,in=down] (0,2);
        \draw (0.1,0.5) \wdot;
    \end{tikzpicture}
    ~-~
    \begin{tikzpicture}[baseline = 12pt, scale=0.5, color=\clr]
        \draw[<-,thick] (0,0) to[out=up,in=down] (1,2);
        \draw[->,thick] (1,0) to[out=up,in=down] (2,2);
        \draw[->,thick] (2,0) to[out=up,in=down] (0,2);
        \draw (0.1,0.5) \wdot;
        \draw (0.15,1.5) \wdot;
        \draw (1.9,1.5) \wdot;
    \end{tikzpicture}
    .
\end{equation*}

\subsection{Connection to the superalgebras of Gao-Rui-Song-Su}\label{SS:connection to GRSS}

In this subsection we explain how to recover the affine and cyclotomic walled Brauer-Clifford superalgebras from \cite{GRSS} from our supercategories. The discussion here parallels the analogous one in \cite[Section 5.5]{BCNR}. 

First,  $\AOBC$ can be viewed as a $\k[\Delta_1,\Delta_3,\ldots]$-linear supercategory with the action of each $\Delta_k$ given by tensoring on the \emph{right}: $h\Delta_k:=h\otimes\Delta_k$. Given $\delta_1,\delta_2,\ldots\in\k$ we let $\AOBC(\delta_1,\delta_2,\ldots)$ denote the supercategory obtained by specializing each $\Delta_{2k-1}$ at $\delta_k$. In other words, $\AOBC(\delta_1,\delta_2,\ldots):=\AOBC\otimes_{\k[\Delta_1,\Delta_3,\ldots]}\k$ viewing $\k$ as a $\k[\Delta_1,\Delta_3,\ldots]$-module with $\Delta_{2k-1}$ acting as $\delta_k$. It follows from \cref{AOBC basis theorem} that the superspace $\Hom_{\AOBC(\delta_1,\delta_2,\ldots)}(\ob a,\ob b)$ has basis consisting of all equivalence classes of normally ordered dotted oriented Brauer-Clifford diagrams (without bubbles) of type $\ob a\to\ob b$. 
We have a similar specialization for level $\ell$ cyclotomic quotients.  
Namely, let us write $f(t) = \sum_{i=0}^{\ell} a_{i}t^{i}$. Now, fix $\delta_{1}, \dotsc , \delta_{\lfloor \ell/2\rfloor} \in \k$, and define $\delta_{k}$ recursively for $k > \lfloor \ell/2\rfloor$ by 
\begin{equation}\label{formula for higher deltas in cyclotomic quotients}
    \delta_{k} = - \sum_{1\leq j\leq\lfloor \ell/2\rfloor} a_{\ell-2j}\delta_{k-j}. 
\end{equation}
Then the specialization $\OBC^f(\delta_1, \dotsc, \delta_{\lfloor \ell/2\rfloor}):=\OBC^f\otimes_{\k[\Delta_1, \Delta_{3}, \dotsc]}\k$. is well defined. By \cref{Cyclotomic basis conjecture} the superspace $\Hom_{\OBC^f(\delta_1,\dotsc, \delta_{\lfloor \ell/2\rfloor})}(\ob a,\ob b)$ has basis consisting of all equivalence classes of normally ordered dotted oriented Brauer-Clifford diagrams (without bubbles) of type $\ob a\to\ob b$ with fewer than $\ell$ $\fulldot$'s on each strand. 
For the remainder of this section we will write $ABC_{s,r}(\delta_1,\delta_2,\ldots)$ and $BC_{s,r}^f(\delta_1, \dotsc, \delta_{\lfloor \ell/2\rfloor})$ for the endomorphism algebras of the object $\down^s\up^r$ in the supercategories $\AOBC(\delta_1,\delta_2,\ldots)$ and $\OBC^f(\delta_1,\dotsc , \delta_{\lfloor \ell/2\rfloor})$, respectively. 

Let $BC^{\text{aff}}_{r,s}$ denote the affine walled Brauer-Clifford superalgebra defined in \cite[Definition 3.1]{GRSS}. This superalgebra is defined via odd generators $c_i$ $(1\leq i\leq r)$,$\bar{c}_j$ $(1\leq j\leq s)$; even generators $e_1$, $x_1$, $\bar{x}_1$, $s_i$ $(1\leq i<r)$, $\bar{s}_j$ $(1\leq j<s)$; and even central generators $\omega_{2k+1}$, $\bar{\omega}_k$ $(k\in\Z_{>0})$ subject to a long list of relations. It is an exercise in checking those relations to see that there is a well-defined superalgebra map $BC^{\text{aff}}_{r,s}\to ABC_{s,r}(\delta_1,\delta_2,\ldots)$ defined by 
\begin{align*}
    c_i & \mapsto\sqrt{-1}~
        \begin{tikzpicture}[baseline = 7.5pt, scale=0.5, color=\clr]
            \draw[<-,thick] (0,0) to[out=up, in=down] (0,1.5);
        \end{tikzpicture}^{s}
        \begin{tikzpicture}[baseline = 7.5pt, scale=0.5, color=\clr]
            \draw[->,thick] (0,0) to[out=up, in=down] (0,1.5);
        \end{tikzpicture}^{r-i}
        \begin{tikzpicture}[baseline = 7.5pt, scale=0.5, color=\clr]
            \draw[->,thick] (0,0) to[out=up, in=down] (0,1.5);
            \draw (0,0.7) \wdot;
        \end{tikzpicture} \;\;
        \begin{tikzpicture}[baseline = 7.5pt, scale=0.5, color=\clr]
            \draw[->,thick] (0,0) to[out=up, in=down] (0,1.5);
        \end{tikzpicture}^{i-1} &
    \bar{c}_j &\mapsto\sqrt{-1}~ 
        \begin{tikzpicture}[baseline = 7.5pt, scale=0.5, color=\clr]
            \draw[<-,thick] (0,0) to[out=up, in=down] (0,1.5);
        \end{tikzpicture}^{s-j}
        \begin{tikzpicture}[baseline = 7.5pt, scale=0.5, color=\clr]
            \draw[<-,thick] (0,0) to[out=up, in=down] (0,1.5);
            \draw (0,0.7) \wdot;
        \end{tikzpicture} \;\;
        \begin{tikzpicture}[baseline = 7.5pt, scale=0.5, color=\clr]
            \draw[<-,thick] (0,0) to[out=up, in=down] (0,1.5);
        \end{tikzpicture}^{j-1}
        \begin{tikzpicture}[baseline = 7.5pt, scale=0.5, color=\clr]
            \draw[->,thick] (0,0) to[out=up, in=down] (0,1.5);
        \end{tikzpicture}^{r} \\
    x_1 & \mapsto -
        \begin{tikzpicture}[baseline = 7.5pt, scale=0.5, color=\clr]
            \draw[<-,thick] (0,0) to[out=up, in=down] (0,1.5);
        \end{tikzpicture}^{s}
        \begin{tikzpicture}[baseline = 7.5pt, scale=0.5, color=\clr]
            \draw[->,thick] (0,0) to[out=up, in=down] (0,1.5);
        \end{tikzpicture}^{r-1}
        \begin{tikzpicture}[baseline = 7.5pt, scale=0.5, color=\clr]
            \draw[->,thick] (0,0) to[out=up, in=down] (0,1.5);
            \draw (0,0.7) \bdot;
        \end{tikzpicture}
        & 
    \bar{x}_1 &\mapsto    
        \begin{tikzpicture}[baseline = 11pt, scale=0.5, color=\clr]
            \draw[<-,thick] (0,0) to[out=up, in=down] (0,2);
        \end{tikzpicture}^{s-1}
        \begin{tikzpicture}[baseline = 11pt, scale=0.5, color=\clr]
            \draw[->,thick] (0,2) to[out=down, in=up] (1.5,1) to[out=down,in=up] (0,0);
            \draw[->,thick] (0.75,0) to (0.75,2);
            \draw[color=black] (1,1.9) node{$~^{r}$};
            \draw (1.5,1) \bdot;
        \end{tikzpicture}
        \\
    s_i & \mapsto   
        \begin{tikzpicture}[baseline = 7.5pt, scale=0.5, color=\clr]
            \draw[<-,thick] (0,0) to[out=up, in=down] (0,1.5);
        \end{tikzpicture}^{s}
        \begin{tikzpicture}[baseline = 7.5pt, scale=0.5, color=\clr]
            \draw[->,thick] (0,0) to[out=up, in=down] (0,1.5);
        \end{tikzpicture}^{r-i-1}
        \begin{tikzpicture}[baseline = 7.5pt, scale=0.5, color=\clr]
            \draw[->,thick] (0,0) to[out=up, in=down] (1,1.5);
            \draw[->,thick] (1,0) to[out=up, in=down] (0,1.5);
        \end{tikzpicture} \;\;
        \begin{tikzpicture}[baseline = 7.5pt, scale=0.5, color=\clr]
            \draw[->,thick] (0,0) to[out=up, in=down] (0,1.5);
        \end{tikzpicture}^{i-1} &
    \bar{s}_j &\mapsto  
        \begin{tikzpicture}[baseline = 7.5pt, scale=0.5, color=\clr]
            \draw[<-,thick] (0,0) to[out=up, in=down] (0,1.5);
        \end{tikzpicture}^{s-j-1}
        \begin{tikzpicture}[baseline = 7.5pt, scale=0.5, color=\clr]
            \draw[<-,thick] (0,0) to[out=up, in=down] (1,1.5);
            \draw[<-,thick] (1,0) to[out=up, in=down] (0,1.5);
        \end{tikzpicture} \;\;
        \begin{tikzpicture}[baseline = 7.5pt, scale=0.5, color=\clr]
            \draw[<-,thick] (0,0) to[out=up, in=down] (0,1.5);
        \end{tikzpicture}^{j-1}
        \begin{tikzpicture}[baseline = 7.5pt, scale=0.5, color=\clr]
            \draw[->,thick] (0,0) to[out=up, in=down] (0,1.5);
        \end{tikzpicture}^{r} \\
    e_1 & \mapsto 
        \begin{tikzpicture}[baseline = 11pt, scale=0.5, color=\clr]
            \draw[<-,thick] (0,0) to[out=up, in=down] (0,2);
        \end{tikzpicture}^{s-1}
        \begin{tikzpicture}[baseline = 11pt, scale=0.5, color=\clr]
            \draw[<-,thick] (0,0) to[out=up,in=left] (2,0.75) to[out=right,in=up] (4,0);
            \draw[->,thick] (0,2) to[out=down,in=left] (2,1.25) to[out=right,in=down] (4,2);
            \draw[->,thick] (2,0) to[out=up, in=down] (2,2);
            \draw[color=black] (2.75,1.9) node{$~^{r-1}$};
        \end{tikzpicture}&
    \bar{\omega}_{2k} & \mapsto 0\\
    \omega_{2k+1} & \mapsto -\delta_{k+1} &
    \bar{\omega}_{2k-1} & \mapsto \delta'_k\\
\end{align*}
where $\delta'_k$ is defined recursively by $\delta_k-\delta'_k=\sum_{0<i<\lfloor k/2\rfloor}\delta_{i}\delta'_{k-i}$ (compare with \cref{R:clockwise bubbles}). This map factors through the quotient $\widetilde{BC}_{r,s}$ of $BC^{\text{aff}}_{r,s}$ by the additional relations $\omega_{2k-1}=-\delta_{k}$, $\bar{\omega}_{2k}=0$, $\bar{\omega}_{2k-1}=\delta'_{k}$ for all $k\in\Z_{>0}$ which is precisely the specialized superalgebra in \cite[Theorem 5.15]{GRSS}. Using our basis theorem one can easily check that the spanning set for $\widetilde{BC}_{r,s}$ described in \cite[Definition 3.15 and Corollary 3.16]{GRSS} maps to a basis for $ABC_{s,r}(\delta_1,\delta_2,\ldots)$. Hence, $\widetilde{BC}_{r,s}\cong ABC_{s,r}(\delta_1,\delta_2,\ldots)$. Note that this also gives a different proof of the linear independence in \cite[Theorem 5.15]{GRSS}.

A similar discussion applies to the the cyclotomic quotients. Again, we fix $\delta_{1}, \dotsc , \delta_{\lfloor \ell/2\rfloor} \in \k$, and define $\delta_{k}$ for $k > \lfloor \ell/2\rfloor$ by \cref{formula for higher deltas in cyclotomic quotients}. 
In \cite[Definition 3.14]{GRSS} the cyclotomic walled Brauer-Clifford superalgebra $BC_{\ell,r,s}$ is defined as the quotient of $\widetilde{BC}_{r,s}$ by the additional relations $f(x_1)=g(\bar{x}_1)=0$ where $g(t)$ is another monic degree $\ell$ polynomial satisfying certain conditions. One can check that those conditions imply $f(x_1)$ and $g(\bar{x}_1)$ are mapped to zero under the composition of the isomorphism $BC^{\text{aff}}_{r,s}\to ABC_{s,r}(\delta_1,\delta_2,\ldots)$ from the previous paragraph with the quotient map $ABC_{s,r}(\delta_1,\delta_2,\ldots)\to BC^f_{s,r}(\delta_1, \dotsc, \delta_{\lfloor \ell/2\rfloor} )$. Hence, that composition factors through $BC_{\ell,r,s}$ to induce a surjection $BC_{\ell,r,s}\to BC^f_{s,r}(\delta_1, \dotsc , \delta_{\lfloor \ell/2\rfloor})$. Now, using our basis theorem one can check that the spanning set for $BC_{\ell,r,s}$ from \cite[Definition 3.15 and Corollary 3.16]{GRSS} maps to a basis for $BC^f_{s,r}(\delta_1, \dotsc \delta_{\lfloor \ell/2\rfloor})$. Hence, $BC_{\ell,r,s}\cong BC^f_{s,r}(\delta_1,\dotsc , \delta_{\lfloor \ell/2\rfloor})$.


\section{The Cyclotomic Basis Theorem}\label{S:cyclobasis}

In this section we provide a proof of \cref{Cyclotomic basis conjecture}.
Let $\K$ be some commutative $\k$-algebra. We will consider also the
base-changed monoidal supercategory
$\AOBC_\K := \AOBC\otimes_\k \K$.

Fix $\ell \geq 0$ and monic polynomials
\begin{align}\label{f1}
f(u) &= z_0 u^\ell + z_1 u^{\ell-2} + z_2 u^{\ell-4}+\cdots,\\
f'(u) &= z_0' u^\ell + z'_1 u^{\ell-2} + z'_2 u^{\ell-4}+\cdots\label{f2}
\end{align}
in $\K[u]$.
So $z_0 = z_0' = 1$, and all powers of $u$ in these polynomials 
are even or odd according to the parity of $\ell$.
Define the power series 
\begin{align}\label{ps1}
\delta(u) &= \delta_0 + \delta_1 u^{-1} + \delta_2 u^{-2}+\cdots,\\
\delta'(u) &= \delta'_0 + \delta'_1 u^{-1} + \delta'_2 u^{-2}+\cdots\label{ps2}
\end{align}
in $\K[[u^{-1}]]$ from
\begin{align}\label{ps3}
\delta(u^2) &:= f'(u) / f(u),\\\label{ps4}
\delta'(u^2)&:=-f(u)/f'(u).
\end{align}
Note $\delta_0 = 1$ but $\delta_0' = -1$.
Computing the coefficients of $u^{\ell-2r}$ in $f'(u) = f(u) \delta(u^2)$ gives
\begin{align}\label{e1}
\sum_{s=0}^r z_s \delta_{r-s} &= z_r'
\text{ for $r=0,\dots,\lfloor \ell/2 \rfloor$},\\
\sum_{s=0}^{\lfloor \ell/2\rfloor} z_s \delta_{r-s} &= 0
\text{ for $r > \lfloor \ell / 2\rfloor$}.\label{e2}
\end{align}

Let $\Sym$ be the algebra of symmetric functions over $\K$, viewed as
a purely even superalgebra.
Denote the elementary and complete symmetric functions by $e_r$ and
$h_r$ as usual; in particular, $e_0=h_0=1$. Working in 
$\Sym[[u^{-1}]]$, we set
\begin{align}
e(u) &:= e_0 + u^{-1} e_1 + u^{-2} e_2+\cdots,\\
h(u) &:= h_0 + u^{-1} h_1 + u^{-2} h_2+\cdots,
\end{align}
and recall the fundamental identity
$e(u) h(-u) = 1$.  By \cref{AOBC basis theorem}  and \cref{R:clockwise bubbles}, there is a well-defined superalgebra
isomorphism
\begin{equation}\label{beta}
\beta:\Sym \stackrel{\sim}{\rightarrow} \End_{\AOBC_\K} (\unit),\quad
h_r \mapsto
(-1)^r \mathord{
\begin{tikzpicture}[baseline = 1.25mm]
  \draw[->,thick,darkblue] (0.2,0.2) to[out=90,in=0] (0,.4);
  \draw[-,thick,darkblue] (0,0.4) to[out=180,in=90] (-.2,0.2);
\draw[-,thick,darkblue] (-.2,0.2) to[out=-90,in=180] (0,0);
  \draw[-,thick,darkblue] (0,0) to[out=0,in=-90] (0.2,0.2);
   \node at (0.2,0.2) {$\color{darkblue}\bullet$};
   \node at (0.65,0.2) {$\color{darkblue}\scriptstyle{2r-1}$};
\end{tikzpicture}
},
\quad
e_r \mapsto
-\mathord{
\begin{tikzpicture}[baseline = 1.25mm]
  \draw[<-,thick,darkblue] (0,0.4) to[out=180,in=90] (-.2,0.2);
  \draw[-,thick,darkblue] (0.2,0.2) to[out=90,in=0] (0,.4);
 \draw[-,thick,darkblue] (-.2,0.2) to[out=-90,in=180] (0,0);
  \draw[-,thick,darkblue] (0,0) to[out=0,in=-90] (0.2,0.2);
   \node at (-0.2,0.2) {$\color{darkblue}\bullet$};
   \node at (-0.65,0.2) {$\color{darkblue}\scriptstyle{2r-1}$};
\end{tikzpicture}}\:.
\end{equation}
So that this also makes sense in the case $r=0$, it is natural to
adopt the convention that
$\mathord{
\begin{tikzpicture}[baseline = 1.25mm]
  \draw[->,thick,darkblue] (0.2,0.2) to[out=90,in=0] (0,.4);
  \draw[-,thick,darkblue] (0,0.4) to[out=180,in=90] (-.2,0.2);
\draw[-,thick,darkblue] (-.2,0.2) to[out=-90,in=180] (0,0);
  \draw[-,thick,darkblue] (0,0) to[out=0,in=-90] (0.2,0.2);
   \node at (0.2,0.2) {$\color{darkblue}\bullet$};
   \node at (0.5,0.2) {$\color{darkblue}\scriptstyle{-1}$};
\end{tikzpicture}
} := 1_\unit$ and $\mathord{
\begin{tikzpicture}[baseline = 1.25mm]
  \draw[<-,thick,darkblue] (0,0.4) to[out=180,in=90] (-.2,0.2);
  \draw[-,thick,darkblue] (0.2,0.2) to[out=90,in=0] (0,.4);
 \draw[-,thick,darkblue] (-.2,0.2) to[out=-90,in=180] (0,0);
  \draw[-,thick,darkblue] (0,0) to[out=0,in=-90] (0.2,0.2);
   \node at (-0.2,0.2) {$\color{darkblue}\bullet$};
   \node at (-0.5,0.2) {$\color{darkblue}\scriptstyle{-1}$};
\end{tikzpicture}} := -1_\unit$.

\begin{lemma}\label{l1}
The $\K$-linear left tensor ideal $\mathcal I_{f,f'}$ of $\AOBC_\K$ generated by
\begin{equation}\label{set1}
\left\{f\left(\mathord{
\begin{tikzpicture}[baseline = -0.5mm]
	\draw[->,thick,darkblue] (0.08,-.2) to (0.08,.3);
      \node at (0.08,0.05) {$\color{darkblue}\bullet$};
\end{tikzpicture}
}\right),
\mathord{\begin{tikzpicture}[baseline = -1mm]
  \draw[-,thick,darkblue] (0,0.2) to[out=180,in=90] (-.2,0);
  \draw[->,thick,darkblue] (0.2,0) to[out=90,in=0] (0,.2);
 \draw[-,thick,darkblue] (-.2,0) to[out=-90,in=180] (0,-0.2);
  \draw[-,thick,darkblue] (0,-0.2) to[out=0,in=-90] (0.2,0);
   \node at (0.65,0) {$\color{darkblue}\scriptstyle{2r-1}$};
      \node at (0.2,0) {$\color{darkblue}\bullet$};
\end{tikzpicture}
}-\delta_r 1_\unit\:\Big|\: r = 1,\dots,\lfloor \ell/2\rfloor
\right\}
\end{equation}
is generated equivalently by
\begin{equation}\label{set2}
\left\{f'\left(\mathord{
\begin{tikzpicture}[baseline = -.5mm]
	\draw[<-,thick,darkblue] (0.08,-.2) to (0.08,.3);
      \node at (0.08,0.1) {$\color{darkblue}\bullet$};
\end{tikzpicture}
}\right),
\mathord{\begin{tikzpicture}[baseline = -1mm]
  \draw[<-,thick,darkblue] (0,0.2) to[out=180,in=90] (-.2,0);
  \draw[-,thick,darkblue] (0.2,0) to[out=90,in=0] (0,.2);
 \draw[-,thick,darkblue] (-.2,0) to[out=-90,in=180] (0,-0.2);
  \draw[-,thick,darkblue] (0,-0.2) to[out=0,in=-90] (0.2,0);
   \node at (-0.65,0) {$\color{darkblue}\scriptstyle{2r-1}$};
      \node at (-0.2,0) {$\color{darkblue}\bullet$};
\end{tikzpicture}
}
-\delta_r' 1_\unit\:\Big|\:r=1,\dots,\lfloor \ell/2 \rfloor\right\}.
\end{equation}
Moreover, $\mathcal I_{f,f'}$ contains 
$
\mathord{\begin{tikzpicture}[baseline = -1mm]
  \draw[-,thick,darkblue] (0,0.2) to[out=180,in=90] (-.2,0);
  \draw[->,thick,darkblue] (0.2,0) to[out=90,in=0] (0,.2);
 \draw[-,thick,darkblue] (-.2,0) to[out=-90,in=180] (0,-0.2);
  \draw[-,thick,darkblue] (0,-0.2) to[out=0,in=-90] (0.2,0);
   \node at (0.65,0) {$\color{darkblue}\scriptstyle{2r-1}$};
      \node at (0.2,0) {$\color{darkblue}\bullet$};
\end{tikzpicture}
}-\delta_r 1_\unit $ and $\mathord{\begin{tikzpicture}[baseline = -1mm]
  \draw[<-,thick,darkblue] (0,0.2) to[out=180,in=90] (-.2,0);
  \draw[-,thick,darkblue] (0.2,0) to[out=90,in=0] (0,.2);
 \draw[-,thick,darkblue] (-.2,0) to[out=-90,in=180] (0,-0.2);
  \draw[-,thick,darkblue] (0,-0.2) to[out=0,in=-90] (0.2,0);
   \node at (-0.65,0) {$\color{darkblue}\scriptstyle{2r-1}$};
      \node at (-0.2,0) {$\color{darkblue}\bullet$};
\end{tikzpicture}
}
-\delta_r' 1_\unit$
for all $r \geq 0$.
\end{lemma}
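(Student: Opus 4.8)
Throughout, the plan is to work inside the $\K$-algebra $\End_{\AOBC_\K}(\unit)\cong\Sym$ via the isomorphism $\beta$ of \eqref{beta} wherever possible, and to bootstrap everything from the single relation $f(\xdot)\in\mathcal I_{f,f'}$. Write $\Delta_m$ (resp.\ $\Delta_m'$) for the counterclockwise (resp.\ clockwise) bubble carrying $m$ black dots, as in \cref{Delta sub k} and \cref{R:clockwise bubbles}, together with the conventions $\Delta_{-1}=1_\unit$, $\Delta_{-1}'=-1_\unit$ fixed just before the lemma; thus the bubble-type generators in \eqref{set1} and \eqref{set2} are precisely $\Delta_{2r-1}-\delta_r 1_\unit$ and $\Delta_{2r-1}'-\delta_r'1_\unit$. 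First I would record the scalar identities forced by \eqref{ps3}--\eqref{ps4}: expanding $f'(u)=f(u)\delta(u^2)$ and $\delta(u^2)\delta'(u^2)=-1$ and comparing coefficients gives $\delta_0=1$, $\delta_0'=-1$, the relations \eqref{e1}--\eqref{e2} (so in particular $\delta_r=-\sum_{s=1}^{\lfloor\ell/2\rfloor}z_s\delta_{r-s}$ for $r>\lfloor\ell/2\rfloor$), and $\sum_{i=0}^r\delta_i\delta_{r-i}'=0$ for $r\geq 1$.

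Second I would prove that $\Delta_{2r-1}-\delta_r1_\unit\in\mathcal I_{f,f'}$ for all $r\geq0$. Since $\mathcal I_{f,f'}$ is a left tensor ideal containing $f(\xdot)\colon\up\to\up$, it contains $(\xdot)^{\,j}\circ f(\xdot)=\sum_{i=0}^{\lfloor\ell/2\rfloor}z_i(\xdot)^{\,j+\ell-2i}$ for every $j\geq0$, hence also the morphism obtained by closing this up into a counterclockwise loop; by the bubble calculus of \cref{prop: bubble reducing} (bubbles with an even number of black dots vanish) that closure equals $\sum_{i=0}^{\lfloor\ell/2\rfloor}z_i\Delta_{j+\ell-2i}$ whenever $j+\ell$ is odd, every index occurring being a positive odd integer. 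So $\sum_{i=0}^{\lfloor\ell/2\rfloor}z_i\Delta_{k-2i}\in\mathcal I_{f,f'}$ for every odd $k\geq\ell$. Now one inducts on $r$: the assertion is the adopted convention for $r=0$ and a defining generator for $1\leq r\leq\lfloor\ell/2\rfloor$; for $r>\lfloor\ell/2\rfloor$ apply the displayed membership with $k=2r-1\;(\geq\ell)$, use $z_0=1$ and the inductive hypothesis on the strictly smaller odd indices $2(r-i)-1$ for $i\geq1$, and compare with \eqref{e2}. This gives the first ``moreover'' assertion.

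Third, I would deduce the clockwise bubble relations and then the equivalence of the two generating sets. Taking $k=2r-1$ in \cref{R:clockwise bubbles} gives $\sum_{i=0}^{r}\Delta_{2i-1}\Delta_{2(r-i)-1}'=0$ in $\End_{\AOBC_\K}(\unit)$ for every $r\geq0$; reducing modulo $\mathcal I_{f,f'}$ via the relations just proved ($\Delta_{2i-1}\equiv\delta_i1_\unit$) and inducting on $r$ to replace $\Delta_{2(r-i)-1}'$ by $\delta_{r-i}'1_\unit$ for $i\geq1$, then comparing with $\sum_i\delta_i\delta_{r-i}'=0$, yields $\Delta_{2r-1}'-\delta_r'1_\unit\in\mathcal I_{f,f'}$ for all $r\geq0$ --- the second ``moreover'' assertion, and in particular every bubble-type generator of \eqref{set2} lies in $\mathcal I_{f,f'}$. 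The equivalence claim then reduces to the two containments $f'(\xdotdown)\in\mathcal I_{f,f'}$ and $f(\xdot)\in\mathcal I'$ (where $\mathcal I'$ is the ideal generated by \eqref{set2}); the mirror of the two-step bootstrap above then supplies the reverse bubble inclusions, hence \eqref{set1}\,$\subseteq\mathcal I'$.

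The last two containments --- and here is where I expect the real work to lie --- I would obtain from a curl identity in $\AOBC$. Rotating $f(\xdot)$ to a downward strand and commuting the resulting black dots past the crossings that appear, using \cref{prop: AOBC slides} (black dots slide freely over cups and caps but acquire cap--cup corrections at crossings), one expresses $f'(\xdotdown)$ as the rotation of $f(\xdot)$ --- a left-tensor-ideal element, hence already in $\mathcal I_{f,f'}$ --- plus a $\K$-linear combination of downward strands decorated with fewer black dots together with a single counterclockwise bubble. By the first ``moreover'' assertion those bubbles act as the scalars $\delta_r$ modulo $\mathcal I_{f,f'}$, and the delicate point is to check that these correction terms, after using $f(\xdot)\in\mathcal I_{f,f'}$ to absorb the high-degree tail, assemble into the honest polynomial $(f'-f)(\xdotdown)$, even though the individual power-series contributions $\delta_r u^{-2r}f(u)$ are not polynomial --- the bookkeeping being governed exactly by \eqref{ps3}, i.e.\ $f'(u)=f(u)\delta(u^2)$, and by \eqref{e1}--\eqref{e2}. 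Granting that, $f'(\xdotdown)\in\mathcal I_{f,f'}$, and $f(\xdot)\in\mathcal I'$ follows symmetrically, completing the equivalence. Every remaining step is a formal manipulation with the slide relations of \cref{prop: AOBC slides}, the bubble calculus of \cref{prop: bubble reducing}, and the clockwise/counterclockwise identity of \cref{R:clockwise bubbles}.
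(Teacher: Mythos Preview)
Your overall strategy matches the paper's proof: induct via \eqref{e2} to get all counterclockwise bubble relations, pass to the clockwise ones, then handle $f'(\xdotdown)$ through a curl identity, and finish by symmetry. Steps one through three are correct; the paper packages your third step as a one-line generating-function inversion, $\beta(h(-u^2))\equiv\delta(u^2)\,1_\unit$ hence $\beta(e(u^2))\equiv-\delta'(u^2)\,1_\unit$, but this is equivalent to your induction using \cref{R:clockwise bubbles}.

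Your fourth step has the right destination but the route is garbled on two points. First, the word ``rotation'' is dangerous here: the zigzag definition \eqref{down black dot} of $\xdotdown$ tensors $\xdot$ on the \emph{right} by an identity strand, so $f(\xdotdown)$ is \emph{not} automatically in a \emph{left} tensor ideal containing $f(\xdot)$. What \emph{is} in $\mathcal I_{f,f'}$ is the right curl on a downward strand with $f(\xdot)$ inserted on its rightmost (upward-oriented) segment; that diagram has the form $\sigma\circ(\lambda\otimes f(\xdot))\circ\tau$, as required. Second, your expectation that the correction terms ``assemble into $(f'-f)(\xdotdown)$ after absorbing a high-degree tail via $f(\xdot)\in\mathcal I_{f,f'}$'' does not straighten out: the curl expansion is already a finite sum, and no relation of the shape $f(\xdotdown)\equiv 0$ on downward strands is yet available to do any absorbing. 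The clean bookkeeping, which is exactly what the paper does, avoids the infinite series entirely: expand the right curl carrying $m$ dots directly as $\sum_{j\geq 0}(\xdotdown)^{\,m-2j}\,\Delta_{2j-1}$ by iterating \cref{prop: AOBC slides} and killing the open-dot and even-dot bubble terms with \cref{prop: bubble reducing}; sum over $m=\ell-2s$ with coefficients $z_s$; replace each $\Delta_{2j-1}$ by $\delta_j$ modulo $\mathcal I_{f,f'}$; and recognise the result via the \emph{finite} identity \eqref{e1} as exactly $\sum_r z_r'(\xdotdown)^{\ell-2r}=f'(\xdotdown)$. No $f'-f$, and no power-series truncation, enters.
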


\begin{proof}
This is similar to Lemma 1.8 of \cite{B}.
We first show by induction on $r$ that
$\mathcal I_{f,f'}$ contains
$\mathord{\begin{tikzpicture}[baseline = -1mm]
  \draw[-,thick,darkblue] (0,0.2) to[out=180,in=90] (-.2,0);
  \draw[->,thick,darkblue] (0.2,0) to[out=90,in=0] (0,.2);
 \draw[-,thick,darkblue] (-.2,0) to[out=-90,in=180] (0,-0.2);
  \draw[-,thick,darkblue] (0,-0.2) to[out=0,in=-90] (0.2,0);
   \node at (0.65,0) {$\color{darkblue}\scriptstyle{2r-1}$};
      \node at (0.2,0) {$\color{darkblue}\bullet$};
\end{tikzpicture}
}-\delta_r 1_\unit$ for all $r \geq 0$.
This is immediate from the definitions for $r \leq \lfloor
\ell/2\rfloor$, so assume that $r > \lfloor \ell/2 \rfloor$.
Since $2r-1 \geq \ell$, 
we get from $f\left(\mathord{
\begin{tikzpicture}[baseline = -0.5mm]
	\draw[->,thick,darkblue] (0.08,-.2) to (0.08,.3);
      \node at (0.08,0.05) {$\color{darkblue}\bullet$};
\end{tikzpicture}
}\right) \in \mathcal I_{f,f'}$ that
$
\sum_{s=0}^{\lfloor\ell/2\rfloor}
z_s \mathord{\begin{tikzpicture}[baseline = -1mm]
  \draw[-,thick,darkblue] (0,0.2) to[out=180,in=90] (-.2,0);
  \draw[->,thick,darkblue] (0.2,0) to[out=90,in=0] (0,.2);
 \draw[-,thick,darkblue] (-.2,0) to[out=-90,in=180] (0,-0.2);
  \draw[-,thick,darkblue] (0,-0.2) to[out=0,in=-90] (0.2,0);
   \node at (0.95,0) {$\color{darkblue}\scriptstyle{2(r-s)-1}$};
      \node at (0.2,0) {$\color{darkblue}\bullet$};
\end{tikzpicture}
}\in \mathcal I_{f,f'}
$ too.
Now the following verifies the induction step:
\begin{align*}
\mathord{\begin{tikzpicture}[baseline = -1mm]
  \draw[-,thick,darkblue] (0,0.2) to[out=180,in=90] (-.2,0);
  \draw[->,thick,darkblue] (0.2,0) to[out=90,in=0] (0,.2);
 \draw[-,thick,darkblue] (-.2,0) to[out=-90,in=180] (0,-0.2);
  \draw[-,thick,darkblue] (0,-0.2) to[out=0,in=-90] (0.2,0);
   \node at (0.65,0) {$\color{darkblue}\scriptstyle{2r-1}$};
      \node at (0.2,0) {$\color{darkblue}\bullet$};
\end{tikzpicture}
}
\!- \delta_r 1_\unit
\stackrel{(\ref{e2})}{=}\mathord{\begin{tikzpicture}[baseline = -1mm]
  \draw[-,thick,darkblue] (0,0.2) to[out=180,in=90] (-.2,0);
  \draw[->,thick,darkblue] (0.2,0) to[out=90,in=0] (0,.2);
 \draw[-,thick,darkblue] (-.2,0) to[out=-90,in=180] (0,-0.2);
  \draw[-,thick,darkblue] (0,-0.2) to[out=0,in=-90] (0.2,0);
   \node at (0.65,0) {$\color{darkblue}\scriptstyle{2r-1}$};
      \node at (0.2,0) {$\color{darkblue}\bullet$};
\end{tikzpicture}
}
\!+ \sum_{s=1}^{\lfloor\ell/2\rfloor}
z_s \delta_{r-s} 1_\unit
\equiv
\sum_{s=0}^{\lfloor\ell/2\rfloor}
z_s \mathord{\begin{tikzpicture}[baseline = -1mm]
  \draw[-,thick,darkblue] (0,0.2) to[out=180,in=90] (-.2,0);
  \draw[->,thick,darkblue] (0.2,0) to[out=90,in=0] (0,.2);
 \draw[-,thick,darkblue] (-.2,0) to[out=-90,in=180] (0,-0.2);
  \draw[-,thick,darkblue] (0,-0.2) to[out=0,in=-90] (0.2,0);
   \node at (0.95,0) {$\color{darkblue}\scriptstyle{2(r-s)-1}$};
      \node at (0.2,0) {$\color{darkblue}\bullet$};
\end{tikzpicture}
}
\equiv 0\pmod{\mathcal I_{f,f'}}.
\end{align*}
Hence, recalling (\ref{beta}), we have that 
$\beta(h(-u^2)) \equiv \delta(u^2) 1_\unit\pmod{\mathcal I_{f,f'}}$.
Since $e(u) = h(-u)^{-1}$ 
and $\delta'(u) = - \delta(u)^{-1}$, it follows that $\beta(e(u^2)) \equiv
-\delta'(u^2) 1_\unit\pmod{\mathcal I_{f,f'}}$.
This shows that 
$\mathord{\begin{tikzpicture}[baseline = -1mm]
  \draw[<-,thick,darkblue] (0,0.2) to[out=180,in=90] (-.2,0);
  \draw[-,thick,darkblue] (0.2,0) to[out=90,in=0] (0,.2);
 \draw[-,thick,darkblue] (-.2,0) to[out=-90,in=180] (0,-0.2);
  \draw[-,thick,darkblue] (0,-0.2) to[out=0,in=-90] (0.2,0);
   \node at (-0.65,0) {$\color{darkblue}\scriptstyle{2r-1}$};
      \node at (-0.2,0) {$\color{darkblue}\bullet$};
\end{tikzpicture}
}
-\delta_r' 1_\unit \in 
\mathcal I_{f,f'}$ for all $r \geq 0$.
Now we can show that $f'\left(\mathord{
\begin{tikzpicture}[baseline = -0.5mm]
	\draw[<-,thick,darkblue] (0.08,-.2) to (0.08,.3);
      \node at (0.08,0.1) {$\color{darkblue}\bullet$};
\end{tikzpicture}
}\right) \in \mathcal I_{f,f'}$:
\begin{align*}
f'\left(\mathord{
\begin{tikzpicture}[baseline = -0.5mm]
	\draw[<-,thick,darkblue] (0.08,-.2) to (0.08,.3);
      \node at (0.08,0.1) {$\color{darkblue}\bullet$};
\end{tikzpicture}
}\right) &= \sum_{r=0}^{\lfloor \ell/2\rfloor} z_r' 
\mathord{
\begin{tikzpicture}[baseline = 0]
	\draw[<-,thick,darkblue] (0.08,-.3) to (0.08,.4);
      \node at (0.08,0.07) {$\color{darkblue}\bullet$};
      \node at (0.5,.07) {$\color{darkblue}\scriptstyle{\ell-2r}$};
\end{tikzpicture}
}
\stackrel{(\ref{e1})}{=}
\sum_{r=0}^{\lfloor \ell/2 \rfloor} 
\sum_{s=0}^r
z_s
\delta_{r-s}
\mathord{
\begin{tikzpicture}[baseline = 0]
	\draw[<-,thick,darkblue] (0.08,-.3) to (0.08,.4);
      \node at (0.08,0.07) {$\color{darkblue}\bullet$};
      \node at (0.5,.07) {$\color{darkblue}\scriptstyle{\ell-2r}$};
\end{tikzpicture}
}
\equiv
\sum_{s=0}^{\lfloor\ell/2\rfloor}
z_s
\sum_{r=s}^{\lfloor\ell/2\rfloor} 
\mathord{
\begin{tikzpicture}[baseline = 0]
	\draw[<-,thick,darkblue] (0.08,-.3) to (0.08,.4);
      \node at (0.08,0.07) {$\color{darkblue}\bullet$};
      \node at (0.5,.07) {$\color{darkblue}\scriptstyle{\ell-2r}$};
\end{tikzpicture}
}
\mathord{\begin{tikzpicture}[baseline = -1mm]
  \draw[-,thick,darkblue] (0,0.2) to[out=180,in=90] (-.2,0);
  \draw[->,thick,darkblue] (0.2,0) to[out=90,in=0] (0,.2);
 \draw[-,thick,darkblue] (-.2,0) to[out=-90,in=180] (0,-0.2);
  \draw[-,thick,darkblue] (0,-0.2) to[out=0,in=-90] (0.2,0);
   \node at (0.92,0) {$\color{darkblue}\scriptstyle{2(r-s)-1}$};
      \node at (0.2,0) {$\color{darkblue}\bullet$};
\end{tikzpicture}
}\\
&
=
\sum_{s=0}^{\lfloor\ell/2\rfloor}
z_s\:
\mathord{
\begin{tikzpicture}[baseline = -0.5mm]
	\draw[-,thick,darkblue] (0,0.6) to (0,0.3);
	\draw[-,thick,darkblue] (0,0.3) to [out=-90,in=180] (.3,-0.2);
	\draw[-,thick,darkblue] (0.3,-0.2) to [out=0,in=-90](.5,0);
	\draw[-,thick,darkblue] (0.5,0) to [out=90,in=0](.3,0.2);
	\draw[-,thick,darkblue] (0.3,.2) to [out=180,in=90](0,-0.3);
	\draw[->,thick,darkblue] (0,-0.3) to (0,-0.6);
   \node at (0.9,0.02) {$\color{darkblue}\scriptstyle{\ell-2s}$};
      \node at (0.48,.02) {$\color{darkblue}\bullet$};
\end{tikzpicture}
} \equiv 0\pmod{\mathcal I_{f,f'}},
\end{align*}
where for the last equality we have used \cref{prop: AOBC slides} repeatedly to pull the $\ell-2s$
dots on the right curl down past the crossing, plus \cref{prop: bubble reducing} to see many of the dotted bubbles produced are zero.
Now we have shown that the left tensor ideal generated by (\ref{set1})
contains \cref{set2}. A similar argument shows that the left
tensor ideal generated by \cref{set2} contains \cref{set1},
completing the proof of the lemma.
\end{proof}

\begin{definition}
Define the {\em cyclotomic oriented Brauer-Clifford supercategory}
assocated to the polynomials $f$ and $f'$ fixed above
to be the $\K$-linear supercategory $\OBC^{f,f'}$
that is the quotient of $\AOBC_\K$ by the $\K$-linear left tensor
ideal $\mathcal I_{f,f'}$ from \cref{l1}.
\end{definition}

Our goal is to establish a basis theorem for the morphism spaces in 
$\OBC^{f,f'}$.
As we will explain fully later on, the 
cyclotomic oriented Brauer-Clifford supercategory $\OBC^f$ is a special case, so that \cref{Cyclotomic basis conjecture} will follow from this more
general result.

Continuing to work over $\K$,
recall from \cref{C:degenerateaffineHeckeCliffordsuperalgebraIsom} there is a $\K$-superalgebra
homomorphism
\begin{equation}
\alpha:\AS_n \rightarrow \End_{\AOBC_\K}(\up^{\otimes n})
\end{equation}
sending $x_i$ to the closed dot on the $i$th strand, $c_i$ to the open
dot on the $i$th strand, and $s_i$ to the crossing of the $i$th and
$(i+1)$th strands (numbering strands $1,\dots,n$ from right to left).
By \cref{AOBC basis theorem}, 
the map
\begin{equation}
\alpha \otimes \beta: \AS_n \otimes_{\K} \Sym
\rightarrow \End_{\AOBC_\K}(\up^{\otimes n})
\end{equation}
is a superalgebra isomorphism.
Let $\Serg_n^f$ be the 
{\em cyclotomic Sergeev superalgebra} 
from \cite[Section 3-e]{BK3}, namely,  the
quotient of $\AS_n$ by the two-sided ideal
generated by $f(x_1)$.
Composing $\alpha$ with the canonical quotient 
map $\Pi:\End_{\AOBC_\K}(\up^{\otimes n}) \twoheadrightarrow \End_{\OBC^{f,f'}}(\up^{\otimes n})$
gives a well-defined $\K$-superalgebra homomorphism
\begin{equation}
\gamma:\Serg_n^f \rightarrow \End_{\OBC^{f,f'}}(\up^{\otimes n}).
\end{equation}
The following is the key to all our subsequent arguments.

\begin{lemma}\label{l2}
$\gamma$ is an isomorphism.
\end{lemma}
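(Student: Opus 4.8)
The plan is to show that $\gamma:\Serg_n^f \to \End_{\OBC^{f,f'}}(\up^{\otimes n})$ is an isomorphism by producing an explicit two-sided inverse, exploiting the tensor decomposition $\End_{\AOBC_\K}(\up^{\otimes n}) \cong \AS_n \otimes_\K \Sym$ coming from \cref{AOBC basis theorem} together with the precise description of the left tensor ideal $\mathcal I_{f,f'}$ provided by \cref{l1}. The first step is to identify what the quotient $\End_{\OBC^{f,f'}}(\up^{\otimes n})$ looks like. Writing $J$ for the two-sided ideal $\End_{\AOBC_\K}(\up^{\otimes n}) \cap \mathcal I_{f,f'}$, we have $\End_{\OBC^{f,f'}}(\up^{\otimes n}) = \End_{\AOBC_\K}(\up^{\otimes n}) / J$, since $\Pi$ is surjective with kernel exactly the morphisms in $\mathcal I_{f,f'}$ landing in that Hom-space. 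So the content is to compute $J$ and check it corresponds under $\alpha\otimes\beta$ to $\big(\text{ideal generated by }f(x_1)\big) \otimes \Sym \;+\; \AS_n \otimes \Sym_+$, where $\Sym_+$ is the augmentation ideal of $\Sym$ with respect to the homomorphism $\Sym \to \K$ determined by $h(-u^2) \mapsto \delta(u^2)$ (equivalently $h_r \mapsto (-1)^r$ times the scalar read off from $\delta$). Granting that, the quotient is $\AS_n / (f(x_1)) \otimes_\K \K = \Serg_n^f$, and $\gamma$ is precisely this identification; injectivity and surjectivity both follow.

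Concretely, I would argue as follows. Surjectivity of $\gamma$ is the easy direction: any morphism $\up^{\otimes n} \to \up^{\otimes n}$ in $\OBC^{f,f'}$ is represented by a dotted oriented Brauer-Clifford diagram with bubbles, and using \cref{prop: AOBC slides} and \cref{prop: bubble reducing} one slides all bubbles to the right and then kills them modulo $\mathcal I_{f,f'}$ using the relations in \cref{set1}--\cref{set2} (this is exactly the spanning argument sketched after \cref{Cyclotomic basis conjecture}), leaving a bubble-free dotted diagram, which lies in the image of $\alpha$. For injectivity, suppose $\gamma(z) = 0$ for some $z \in \Serg_n^f$; lift $z$ to $\tilde z \in \AS_n$ and then to $\alpha(\tilde z) \otimes 1 \in \End_{\AOBC_\K}(\up^{\otimes n})$. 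Then $\alpha(\tilde z)\otimes 1 \in \mathcal I_{f,f'}$, so it is a $\K$-linear combination $\sum_i g_i \circ (p_i \otimes q_i) \circ g_i'$ where $p_i \otimes q_i$ runs over the generators in \cref{set1} (or, by \cref{l1}, a combination of the listed curl-minus-scalar elements and $f(\xdot)$) and $g_i, g_i'$ are arbitrary morphisms of the appropriate types. Using \cref{AOBC basis theorem} to put everything in normal form, each such term, when expressed in the $\AS_n \otimes \Sym$ basis, lies either in $(f(x_1)) \otimes \Sym$ (the term coming from $f(\xdot)$ placed on a strand and pushed to strand $1$ via crossings) or in $\AS_n \otimes \Sym_+$ (the terms coming from curl-minus-scalar, since a dotted bubble is a central element of $\End_{\AOBC_\K}(\up^{\otimes n})$ lying in $1\otimes\Sym$, and curl-minus-$\delta_r$ lies in $1 \otimes \Sym_+$ by \cref{beta} and the defining property of $\delta$). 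Projecting onto the $\Sym \to \K$ augmentation and using that $\tilde z \otimes 1$ has $\Sym$-component $1$, one concludes $\tilde z \in (f(x_1))$, i.e. $z = 0$ in $\Serg_n^f$.

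The main obstacle — and the step that needs genuine care rather than routine bookkeeping — is verifying that $J = \End_{\AOBC_\K}(\up^{\otimes n}) \cap \mathcal I_{f,f'}$ is \emph{no larger} than the ideal $\langle f(x_1)\rangle \otimes \Sym + \AS_n \otimes \Sym_+$. The subtlety is that a general element of the left tensor ideal $\mathcal I_{f,f'}$ is obtained by tensoring a generator on the left with an arbitrary object and composing on both sides with arbitrary morphisms, and these morphisms may involve cups, caps, crossings and dots that interact nontrivially with the inserted relation before it gets ``absorbed.'' One must show that after straightening (using \cref{prop: OBC slides}, \cref{prop: AOBC slides}, the reduction of bubbles in \cref{prop: bubble reducing}, and the normal-form spanning of \cref{lemma: normally ordered diagrams span}), every such element, restricted to the endomorphism space of $\up^{\otimes n}$, reduces to a combination of the two expected types. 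The cleanest way to organize this is to work with the filtration by number of closed dots and induct, mimicking the proof of \cref{lemma: normally ordered diagrams span}, so that at each stage the ``excess'' dots beyond degree $\ell-1$ on a strand, or any bubble, is traded — modulo strictly lower filtration and modulo $\langle f(x_1)\rangle + \AS_n\otimes\Sym_+$ — for lower-degree diagrams. Once this structural lemma is in place, \cref{l2} follows immediately as above. I expect the remaining verifications (that dotted bubbles are central, that the curl elements map into $1\otimes\Sym_+$, that $f(\xdot)$ on any strand is conjugate via crossings to $f(x_1)$) to be short consequences of \cref{prop: AOBC slides}, \cref{beta}, and the relations \cref{e1}--\cref{e2}.
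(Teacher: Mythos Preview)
Your overall strategy matches the paper's exactly: set up the commutative square with $\pi:\AS_n\otimes_\K\Sym\to\Serg_n^f$ (your augmentation map) and the quotient $\Pi$, get surjectivity for free, and reduce injectivity to showing $(\alpha\otimes\beta)^{-1}(\ker\Pi)\subseteq\ker\pi = I\otimes\Sym + \AS_n\otimes J$ (your $\Sym_+$). You also correctly anticipate that elements of $\ker\Pi$ have the form $\sigma\circ(\lambda\otimes\rho)\circ\tau$ with $\rho$ a generator of $\mathcal I_{f,f'}$, and that there are two kinds of contribution.

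Where you diverge from the paper is in the hard case $\rho = f\!\left(\xdot\right)$. Your proposed filtration-by-closed-dots induction is not what the paper does, and as written it is not clear it works: if you expand $f$ into its monomials and begin reducing each $\sigma\circ(\lambda\otimes\xdot^{\,\ell-2s})\circ\tau$ separately, you destroy the one piece of structure you need --- that the dots come packaged as $f$. The individual summands have no reason to land in $I\otimes\Sym+\AS_n\otimes J$, and ``reducing modulo the expected ideal'' at each step is circular.

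The paper's device is to treat $\rho$ as an indivisible \emph{coupon} sitting on the rightmost strand and to straighten the remainder of the diagram ($\sigma,\lambda,\tau$) \emph{around} it using \cref{prop: OBC slides}, \cref{prop: AOBC slides}, \cref{prop: bubble reducing}, never touching the coupon itself. After straightening there is a simple topological dichotomy: either the coupon's strand propagates from bottom to top (type I --- manifestly in $I\otimes\Sym$ since $\rho=\alpha(f(x_1))$), or that strand has been closed off into a bubble carrying the coupon together with $2r-\ell-1$ extra dots for some $r>\lfloor\ell/2\rfloor$ (type II --- and one computes $\beta^{-1}$ of such a bubble to be $\sum_s(-1)^{r-s}z_s h_{r-s}$, which lies in $J$ by \cref{e2}). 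This coupon-fixed straightening is the missing idea; with it, no filtration argument is needed.
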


\begin{proof}
Let $\pi:\AS_n \otimes_{\K} \Sym \twoheadrightarrow \Serg_n^f$ be the $\K$-superalgebra
homomorphism that sends 
$a \otimes 1$
to the canonical image of $a$ in 
$\Serg^f_n$,
and $1 \otimes h_r$ to $(-1)^r \delta_r$.
Note that $\ker \pi$ is $I \otimes \Sym + \AS_n \otimes J$
where $I$ is the two-sided ideal of $\AS_n$ generated by $f(x_1)$
and $J$ is the two-sided ideal of $\Sym$ generated by
$h_r- (-1)^r \delta_r$ for $r \geq 1$.
Also let $\Pi:\End_{\AOBC_\K}(\up^{\otimes n})
\twoheadrightarrow \End_{\OBC^{f,f'}}(\up^{\otimes n})$
be the canonical quotient map as above.
By directly checking it on generators, one sees that the following diagram commutes:
$$
\begin{CD}
\AS_n \otimes_{\K}
\Sym&@>\sim>\alpha\otimes\beta>&\End_{\AOBC_\K}(\up^{\otimes n})\\
@V\pi VV&&@VV\Pi V\\
\Serg_n^f&@>>\gamma>&\End_{\OBC^{f,f'}}(\up^{\otimes n}).
\end{CD}
$$
It follows immediately that $\gamma$ is surjective. Moreover,
the injectivity of $\gamma$ follows if we can show that
$(\alpha\otimes\beta)^{-1}(\ker \Pi) \subseteq \ker \pi$.

By the definition of $\OBC^{f,f'}$,
$\ker \Pi$ is the subspace of
$\End_{\AOBC_\K}(\up^{\otimes n})$ 
defined by the left tensor ideal $\mathcal I_{f,f'}$. This
means that any
element of $\ker \Pi$ is a $\K$-linear combination of morphisms
$\theta:\up^{\otimes n} \rightarrow
\up^{\otimes n}$ in $\AOBC_\K$ 
of the form $$
\theta = 
\sigma \circ (\lambda \otimes \rho) \circ \tau
=
\mathord{
\begin{tikzpicture}[baseline = -.5mm]
\draw[darkblue,thick,yshift=-4pt,xshift=-4pt] (.5,0) rectangle ++(8pt,8pt);
\node at (.5,0) {$\scriptstyle \rho$};
\draw[darkblue,thick,yshift=-4pt,xshift=-4pt] (-.33,0) rectangle ++(16pt,8pt);
\node at (-.2,0.02) {$\scriptstyle \lambda$};
\draw[darkblue,thick,yshift=-4pt,xshift=-4pt] (-.34,-.5) rectangle ++(32pt,8pt);
\node at (0.1,-.5) {$\scriptstyle \tau$};
\draw[darkblue,thick,yshift=-4pt,xshift=-4pt] (-.34,.5) rectangle ++(32pt,8pt);
\node at (0.1,.5) {$\scriptstyle \sigma$};
\draw[->,thick,darkblue] (-.34,0.65) to (-.34,.9);
\node at (0.1,.8) {$\scriptstyle \cdots$};
\draw[->,thick,darkblue] (.49,0.65) to (.49,.9);
\draw[-,thick,darkblue] (-.34,-0.65) to (-.34,-.9);
\node at (0.1,-.8) {$\scriptstyle \cdots$};
\draw[-,thick,darkblue] (.49,-0.65) to (.49,-.9);
\draw[-,darkblue] (.49,-0.35) to (.49,-.15);
\draw[-,darkblue] (.49,0.35) to (.49,.15);
\draw[-,thick,darkblue] (-.34,-0.35) to (-.34,-.15);
\draw[-,thick,darkblue] (-.34,0.35) to (-.34,.15);
\draw[-,thick,darkblue] (-.04,-0.35) to (-.04,-.15);
\draw[-,thick,darkblue] (-.04,0.35) to (-.04,.15);
\end{tikzpicture}
}\:.
$$
where $\rho$ is one of the generating morphisms
$f\left(\mathord{
\begin{tikzpicture}[baseline = -0.5mm]
	\draw[->,thick,darkblue] (0.08,-.2) to (0.08,.3);
      \node at (0.08,0.05) {$\color{darkblue}\bullet$};
\end{tikzpicture}
}\right)$ or $\beta(h_r) - (-1)^r \delta_r$
for $\mathcal I_{f,f'}$, and $\sigma,\tau,\lambda$ are any
other morphisms so that the compositions make sense.
Thus, we must show that the inverse image under $\alpha\otimes\beta$ of 
such a morphism $\theta=\sigma \circ (\lambda \otimes \rho) \circ \tau$ 
lies in $\ker \pi = I \otimes \Sym + \AS_n \otimes J$.

If $\rho = \beta(h_r) - (-1)^r \delta_r$ for some
$r$
then $(\alpha\otimes\beta)^{-1}(\theta)$ 
obviously lies in $\AS_n \otimes J$.
Instead, suppose that $\rho = f\left(\mathord{
\begin{tikzpicture}[baseline =-0.5mm]
	\draw[->,thick,darkblue] (0.08,-.2) to (0.08,.3);
      \node at (0.08,0.05) {$\color{darkblue}\bullet$};
\end{tikzpicture}
}\right)$.
Using the relations established earlier in the paper (especially \cref{prop: AOBC slides} and \cref{prop: bubble reducing}), 
we ``straighten'' the diagram $\theta$
leaving the $\rho$-coupon on the right edge fixed, to rewrite it as a
$\K$-linear combination of morphisms of the following two types:
\begin{itemize}
\item[(I)]
$
\mathord{
\begin{tikzpicture}[baseline = -.5mm]
\draw[darkblue,thick,yshift=-4pt,xshift=-4pt] (.5,0) rectangle ++(8pt,8pt);
\node at (.5,0) {$\scriptstyle \rho$};
\draw[darkblue,thick,yshift=-4pt,xshift=-4pt] (-.34,-.5) rectangle ++(32pt,8pt);
\node at (0.1,-.5) {$\scriptstyle \tau'$};
\draw[darkblue,thick,yshift=-4pt,xshift=-4pt] (-.34,.5) rectangle ++(32pt,8pt);
\node at (0.1,.5) {$\scriptstyle \sigma'$};
\draw[->,thick,darkblue] (-.34,0.65) to (-.34,.9);
\node at (0.1,.8) {$\scriptstyle \cdots$};
\draw[->,thick,darkblue] (.49,0.65) to (.49,.9);
\draw[-,thick,darkblue] (-.34,-0.65) to (-.34,-.9);
\node at (0.1,-.8) {$\scriptstyle \cdots$};
\draw[-,thick,darkblue] (.49,-0.65) to (.49,-.9);
\draw[-,thick,darkblue] (.49,-0.35) to (.49,-.15);
\draw[-,thick,darkblue] (.49,0.35) to (.49,.15);
\draw[-,thick,darkblue] (-.34,-0.35) to (-.34,.35);
\draw[-,thick,darkblue] (.14,0.35) to (.14,-.35);
\node at (-0.1,0) {$\scriptstyle \cdots$};
\end{tikzpicture}
} \otimes \delta$
for $\sigma',\tau'\in \operatorname{Im} \alpha$ and $\delta \in \operatorname{Im} \beta$;
\item[(II)]  $\mathord{
\begin{tikzpicture}[baseline = -.5mm]
\draw[darkblue,thick,yshift=-4pt,xshift=-4pt] (-.34,0) rectangle ++(32pt,8pt);
\node at (0.1,0) {$\scriptstyle \lambda'$};
\draw[->,thick,darkblue] (-.34,0.15) to (-.34,.4);
\node at (0.1,.3) {$\scriptstyle \cdots$};
\draw[->,thick,darkblue] (.49,0.15) to (.49,.4);
\draw[-,thick,darkblue] (-.34,-0.15) to (-.34,-.4);
\node at (0.1,-.3) {$\scriptstyle \cdots$};
\draw[-,thick,darkblue] (.49,-0.15) to (.49,-.4);
\end{tikzpicture}
}\:\:\:
\mathord{\begin{tikzpicture}[baseline = -0.2mm]
  \draw[->,thick,darkblue] (0,0.3) to[out=180,in=90] (-.3,0);
  \draw[-,thick,darkblue] (0.3,0) to[out=90,in=0] (0,.3);
 \draw[-,thick,darkblue] (-.3,0) to[out=-90,in=180] (0,-0.3);
  \draw[-,thick,darkblue] (0,-0.3) to[out=0,in=-90] (0.3,0);
\filldraw[white,thick,yshift=-4pt,xshift=-4pt] (0.28,-0.05) rectangle ++(7.8pt,7.6pt);
\draw[darkblue,thick,yshift=-4pt,xshift=-4pt] (0.28,-0.06) rectangle ++(8pt,8pt);
\node at (0.28,-0.05) {$\scriptstyle\rho$};
   \node at (0.75,.35) {$\color{darkblue}\scriptstyle{2r-\ell-1}$};
      \node at (0.16,.25) {$\color{darkblue}\bullet$};
\end{tikzpicture}
}\otimes \delta$
for $\lambda'\in \operatorname{Im}(\alpha), \delta \in
\operatorname{Im}(\beta)$ and $r > \lfloor \ell/2\rfloor$.
\end{itemize}
These morphisms 
arise when the
$\rho$-coupon ends up on a propagating strand 
(type I)
or on a dotted bubble (type II)
after straightening.
The inverse image under $\alpha\otimes\beta$ of a
type I morphism 
lies in $I \otimes \Sym$.
The inverse image under $\alpha\otimes\beta$ of 
a
type II morphism lies in $\AS_n \otimes J$
because
\begin{align*}
\beta^{-1}\left(\mathord{\begin{tikzpicture}[baseline = -0.2mm]
  \draw[->,thick,darkblue] (0,0.3) to[out=180,in=90] (-.3,0);
  \draw[-,thick,darkblue] (0.3,0) to[out=90,in=0] (0,.3);
 \draw[-,thick,darkblue] (-.3,0) to[out=-90,in=180] (0,-0.3);
  \draw[-,thick,darkblue] (0,-0.3) to[out=0,in=-90] (0.3,0);
\filldraw[white,thick,yshift=-4pt,xshift=-4pt] (0.28,-0.05) rectangle ++(7.8pt,7.6pt);
\draw[darkblue,thick,yshift=-4pt,xshift=-4pt] (0.28,-0.06) rectangle ++(8pt,8pt);
\node at (0.28,-0.05) {$\scriptstyle\rho$};
   \node at (0.78,.35) {$\color{darkblue}\scriptstyle{2r-\ell-1}$};
      \node at (0.16,.25) {$\color{darkblue}\bullet$};
\end{tikzpicture}
}\right)
&=
\sum_{s =0}^{\lfloor \ell/2\rfloor}  (-1)^{r-s} z_s h_{r-s}
\\&
\!\!\!\stackrel{(\ref{e2})}{\displaystyle=}
\sum_{s =0}^{\lfloor\ell/2\rfloor}  (-1)^{r-s} z_s \left(h_{r-s} - (-1)^{r-s} \delta_{r-s}\right)
\in J.
\end{align*}
\end{proof}

Now we can prove the main result of this section. Recall the
definition of normally ordered dotted oriented Brauer-Clifford diagram
from \cref{D:N.O. dotted OBC diagram}.

\begin{theorem}\label{thm}
For any $\mathtt{a}, \mathtt{b} \in \operatorname{ob} \OBC^{f,f'}$,
the morphism space $\Hom_{\OBC^{f,f'}}(\mathtt{a}, \mathtt{b})$
is a free $\K$-module with basis given by equivalence classes of
normally ordered 
dotted oriented Brauer-Clifford diagrams (without bubbles) of type
$\mathtt{a}\rightarrow\mathtt{b}$ 
having fewer than $\ell$
closed dots on each strand.
\end{theorem}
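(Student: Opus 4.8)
The plan is to reduce \cref{thm} to the computation of $\End_{\OBC^{f,f'}}(\up^{\otimes n})$ already carried out in \cref{l2}, and then to match ranks against a count of diagrams. I would use the following elementary principle throughout: over the commutative ring $\K$, a spanning set of a free module of finite rank $m$ has at least $m$ elements, and it is a basis as soon as it has exactly $m$ of them (the induced surjection $\K^{\oplus m}\twoheadrightarrow\K^{\oplus m}$ is then an isomorphism). So it is enough to establish (a) the proposed diagrams span $\Hom_{\OBC^{f,f'}}(\mathtt a,\mathtt b)$; (b) each such Hom-space is a free $\K$-module of a rank that \cref{l2} lets us read off; and (c) the proposed spanning set has exactly that cardinality.

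For (a) I would argue exactly as in the span discussion following \cref{Cyclotomic basis conjecture} and in \cref{lemma: normally ordered diagrams span}: any morphism is a $\K$-combination of dotted oriented Brauer-Clifford diagrams with bubbles; use \cref{prop: AOBC slides} and \cref{prop: bubble reducing} to bring such a diagram to normally ordered form; replace each crossing-free unnested counterclockwise bubble by the scalar $\delta_r$ (resp.\ $\delta_r'$) supplied by \cref{l1}; and then use $f(\xdot)\equiv 0$ on up-strands and $f'(\downxdot)\equiv 0$ on down-strands to push the number of closed dots on every strand below $\ell$, inducting on the total dot count since sliding a dot through a crossing produces only lower-order error terms. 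For this step I first need a mild strengthening of \cref{l1}, namely that $\mathcal I_{f,f'}$ contains $f$ applied to the closed dot on \emph{any} up-strand of \emph{any} diagram, and $f'$ applied to the closed dot on any down-strand; this follows from \cref{l1} by conjugating the generators in \cref{set1} and \cref{set2} with crossings and zig-zags, clearing the resulting bubbles via \cref{prop: bubble reducing}, and inducting on degree. For (b) and (c): in the case $\mathtt a=\mathtt b=\up^{\otimes n}$, \cref{l2} gives $\End_{\OBC^{f,f'}}(\up^{\otimes n})\cong\Serg_n^f$, which by the PBW theorem for cyclotomic Sergeev superalgebras (\cite[Section~3-e]{BK3}, \cite[Theorem~14.2.2]{KleshBook}) is free over $\K$ of rank $\ell^n 2^n n!$, and the normally ordered dotted oriented Brauer-Clifford diagrams of type $\up^{\otimes n}\to\up^{\otimes n}$ without bubbles and with fewer than $\ell$ closed dots per strand are counted by the same $\ell^n 2^n n!$ (a permutation in $S_n$, a subset of $\{1,\dots,n\}$ recording which strands carry the open dot, and a closed-dot multiplicity in $\{0,\dots,\ell-1\}$ per strand, with the conventions of \cref{D:N.O. dotted OBC diagram} determining all else). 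For arbitrary $\mathtt a,\mathtt b$: if the number of $\up$'s in $\mathtt a$ plus $\down$'s in $\mathtt b$ differs from the number of $\up$'s in $\mathtt b$ plus $\down$'s in $\mathtt a$, then both sides are zero already in $\AOBC$; otherwise, with $r$ that common number, I would argue as in the proof of \cref{Basis for finite AOBC} that the $\K$-linear isomorphisms of \cref{reduce Hom to End} descend to isomorphisms $\Hom_{\OBC^{f,f'}}(\mathtt a,\mathtt b)\xrightarrow{\sim}\End_{\OBC^{f,f'}}(\up^{\otimes r})$, and that they carry the proposed set of type $\mathtt a\to\mathtt b$ bijectively onto the one of type $\up^{\otimes r}\to\up^{\otimes r}$ (bending a strand changes neither the dots on it nor the normal ordering). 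Then (b) and (c) hold in general and \cref{thm} follows.

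The hard part will be verifying that the reduction maps of \cref{reduce Hom to End} are well defined on the quotient, i.e.\ that the left tensor ideal $\mathcal I_{f,f'}$ is stable under the rotation (braiding and duality) operations used there, since these involve tensoring on the \emph{right} as well as the left. This is precisely why \cref{l1} is phrased symmetrically in $f$ and $f'$ and records membership of the dotted cup/cap bubbles: granted the strengthened form of \cref{l1} above, stability under bending a single strand is immediate, hence so is stability under the whole reduction. Everything else — the PBW basis of $\Serg_n^f$, the counting, and the descent of \cref{reduce Hom to End} once the ideal is known to be stable — is routine. Finally, \cref{Cyclotomic basis conjecture} is recovered as the special case in which $\K=\k[\Delta_1,\Delta_3,\dots,\Delta_{2\lfloor\ell/2\rfloor-1}]$ and $f'(u)=f(u)\delta(u^2)$ with $\delta_r=\Delta_{2r-1}$ for $r\le\lfloor\ell/2\rfloor$ and $\delta_r$ defined for larger $r$ by \cref{formula for higher deltas in cyclotomic quotients}: under this base change $\OBC^{f,f'}$ is identified with $\OBC^f$ (the bubble relations in \cref{set1} become tautologies), and combining \cref{thm} with the monomial $\k$-basis of $\K$ yields exactly the diagrams-with-bubbles basis asserted in \cref{Cyclotomic basis conjecture}.
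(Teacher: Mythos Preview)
Your rank-counting strategy is sound, but the ``mild strengthening of \cref{l1}'' on which your descent argument rests is \emph{false}. Take $\ell=1$, so $f(u)=u$. By \cref{l2} we have $\End_{\OBC^{f,f'}}(\up^{\otimes 2})\cong\Serg_2^f=\Serg_2$, and under this isomorphism $f(\xdot)\otimes 1_\up$ corresponds to $x_2$. From $s_1x_1=x_2s_1-1-c_1c_2$ (the relation recorded in \cref{C:degenerateaffineHeckeCliffordsuperalgebraIsom}) and $x_1=0$ one finds $x_2=(1+c_1c_2)s_1\neq 0$ in $\Serg_2$. So $f(\xdot)\otimes 1_\up\notin\mathcal I_{f,f'}$: the left tensor ideal does \emph{not} contain $f$ applied to an arbitrary up-strand, and in particular is not stable under right-tensoring by identities. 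Your proposed derivation (``conjugate by crossings and induct on degree'') fails because conjugating $1_\up\otimes f(\xdot)$ by the crossing yields $f(\xdot)\otimes 1_\up$ only modulo lower-degree terms, and those error terms have no reason to lie in $\mathcal I_{f,f'}$. The same misconception appears in your spanning step: what is true is not that $f$ vanishes on every strand, but that $\ell$ closed dots on a strand can be rewritten using fewer dots \emph{after} sliding them to the rightmost strand where the relation genuinely holds --- which is exactly how the paper argues.

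The repair is not to prove $\mathcal I_{f,f'}$ is two-sided (it is not) but to choose reduction isomorphisms built from composition and \emph{left} tensoring only. This is what the paper does: it passes from $\Hom(\mathtt a,\mathtt b)$ to $\Hom(\mathtt b^*\otimes\mathtt a,\unit)$ via $\lambda\mapsto\ev_{\mathtt b}\circ(1_{\mathtt b^*}\otimes\lambda)$, then composes with crossings to reach $\Hom(\down^{\otimes r}\up^{\otimes r},\unit)$, and finally uses $\mu\mapsto(1_{\up^{\otimes r}}\otimes\mu)\circ(\coev_{\up^{\otimes r}}\otimes 1_{\up^{\otimes r}})$ to land in $\End(\up^{\otimes r})$. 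In each of these maps \emph{and their inverses} the variable morphism is only left-tensored by identities and composed with fixed morphisms of $\AOBC_\K$, so both directions preserve $\mathcal I_{f,f'}$ and descend to $\OBC^{f,f'}$. With the reduction set up this way your rank-counting conclusion goes through unchanged (the combinatorial count $r!\,2^r\ell^r$ is correct and does not rely on the incidental --- and also false --- claim that bending preserves normal ordering). The paper instead finishes with a unitriangular transition-matrix argument, since these reductions do not carry normally ordered diagrams to normally ordered diagrams; your rank-counting ending is a legitimate and slightly cleaner alternative once the reduction is justified correctly.
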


\begin{proof}
Let us first show that the given diagrams {\em span}
$\Hom_{\OBC^{f,f'}}(\mathtt{a}, \mathtt{b})$. In fact, we show by
induction on $N$ that any diagram representing a morphism
$\mathtt{a} \rightarrow \mathtt{b}$ in $\OBC^{f,f'}$ with $N$ closed
dots 
can be written as a linear combination of the ones among the specified
diagrams which 
have 
$N$ or fewer closed dots. The case $N=0$ is straightforward (and also follows
from \cref{OBC basis theorem}).
In general, take a diagram with $N > 0$ closed dots. 
Using the relations, dots can be moved past crossings or other dots
(possibly introducing a sign in the case of open dots) 
modulo diagrams with strictly fewer closed dots. In particular, any dotted bubble can be moved to the right
hand side of the picture modulo diagrams with strictly fewer closed dots and,
once on the
right hand side, it
may be replaced by a scalar using \cref{prop: bubble reducing} and the
last part of \cref{l1}. To complete the proof of the spanning
part of the theorem, it remains to observe that if any strand has
$\ell$ closed dots, it can be rewritten in terms of of diagrams
with strictly fewer closed dots:
for any objects $\mathtt{a}$ and $\mathtt{b}$,
the relations $\mathtt{b}\otimes\mathtt{a} \otimes f\left(\mathord{
\begin{tikzpicture}[baseline = -.5mm]
	\draw[->,thick,darkblue] (0.08,-.2) to (0.08,.3);
      \node at (0.08,0.05) {$\color{darkblue}\bullet$};
\end{tikzpicture}
}\right) =0$ and $\mathtt{b}\otimes\mathtt{a} \otimes f'\left(\mathord{
\begin{tikzpicture}[baseline = -.5mm]
	\draw[<-,thick,darkblue] (0.08,-.2) to (0.08,.3);
      \node at (0.08,0.1) {$\color{darkblue}\bullet$};
\end{tikzpicture}
}\right) =0$
 in $\OBC^{f,f'}$ imply 
that 
$$
\mathord{
\begin{tikzpicture}[baseline = -.5mm]
\node at (0.5,-.7) {$\scriptstyle \mathtt{a}$};
\draw[-,ultra thick,darkblue] (.5,-0.5) to (.5,.5);
\node at (-0.5,-.7) {$\scriptstyle \mathtt{b}$};
\node at (0,0) {$\scriptstyle \color{darkblue}\bullet$};
\node at (0.2,0) {$\scriptstyle \color{darkblue}\ell$};
\draw[-,ultra thick,darkblue] (-.5,-0.5) to (-.5,.5);
\draw[->,thick,darkblue] (0,-0.5) to (0,.5);
\end{tikzpicture}
}
\equiv
\mathord{
\begin{tikzpicture}[baseline = -.5mm]
\node at (0.5,-.7) {$\scriptstyle \mathtt{a}$};
\draw[-,ultra thick,darkblue] (.5,-0.5) to (.5,.5);
\node at (-0.5,-.7) {$\scriptstyle \mathtt{b}$};
\node at (0.7,0) {$\scriptstyle \color{darkblue}\bullet$};
\node at (0.9,0) {$\scriptstyle \color{darkblue}\ell$};
\draw[-,ultra thick,darkblue] (-.5,-0.5) to (-.5,.5);
\draw[-,thick,darkblue] (0,-0.5) to [out=90,in=-90] (.7,0);
\draw[->,thick,darkblue] (0.7,0) to [out=90,in=-90] (0,0.5);
\end{tikzpicture}
}
\equiv 0
\:,\qquad
\mathord{
\begin{tikzpicture}[baseline = -.5mm]
\node at (0.5,-.7) {$\scriptstyle \mathtt{a}$};
\draw[-,ultra thick,darkblue] (.5,-0.5) to (.5,.5);
\node at (-0.5,-.7) {$\scriptstyle \mathtt{b}$};
\node at (0,0) {$\scriptstyle \color{darkblue}\bullet$};
\node at (0.2,0) {$\scriptstyle \color{darkblue}\ell$};
\draw[-,ultra thick,darkblue] (-.5,-0.5) to (-.5,.5);
\draw[<-,thick,darkblue] (0,-0.5) to (0,.5);
\end{tikzpicture}
}
\equiv
\mathord{
\begin{tikzpicture}[baseline = -.5mm]
\node at (0.5,-.7) {$\scriptstyle \mathtt{a}$};
\draw[-,ultra thick,darkblue] (.5,-0.5) to (.5,.5);
\node at (-0.5,-.7) {$\scriptstyle \mathtt{b}$};
\node at (0.7,0) {$\scriptstyle \color{darkblue}\bullet$};
\node at (0.9,0) {$\scriptstyle \color{darkblue}\ell$};
\draw[-,ultra thick,darkblue] (-.5,-0.5) to (-.5,.5);
\draw[<-,thick,darkblue] (0,-0.5) to [out=90,in=-90] (.7,0);
\draw[-,thick,darkblue] (0.7,0) to [out=90,in=-90] (0,0.5);
\end{tikzpicture}
}
\equiv 0
\:,
$$
where $\equiv$ means ``equal modulo a linear combination of diagrams with
$< \ell$ closed dots.''

It remains to establish linear independence.
Note to start with that the result is true in the special case
$\mathtt{a} = \mathtt{b}  = \up^{\otimes n}$, for in this case it follows using
Lemma~\ref{l2} and the basis theorem for $\Serg_n^f$ established 
 in \cite[$\S$3-e]{BK3}.
In general, we first reduce to the case $\mathtt{b} = \unit$
using $\K$-module isomorphism
\begin{align*}
\Hom_{\OBC^{f,f'}}(\mathtt{a}, \mathtt{b})
&\stackrel{\sim}{\rightarrow}
\Hom_{\OBC^{f,f'}}(\mathtt{b}^{*}\otimes\mathtt{a}, \unit),
\qquad
&\mathord{
\begin{tikzpicture}[baseline = -.5mm]
\draw[darkblue,thick,yshift=-4pt,xshift=-4pt] (0,0) rectangle ++(12pt,8pt);
\node at (0.07,0) {$\scriptstyle \lambda$};
\node at (0.07,-.52) {$\scriptstyle \mathtt{a}$};
\node at (0.07,.54) {$\scriptstyle \mathtt{b}$};
\draw[-,ultra thick,darkblue] (0.07,-0.15) to (0.07,-.4);
\draw[-,ultra thick,darkblue] (0.07,0.15) to (0.07,.4);
\end{tikzpicture}
}
&\mapsto
\mathord{
\begin{tikzpicture}[baseline = -.5mm]
\draw[darkblue,thick,yshift=-4pt,xshift=-4pt] (0,0) rectangle ++(12pt,8pt);
\node at (0.07,0) {$\scriptstyle \lambda$};
\node at (0.07,-.52) {$\scriptstyle \mathtt{a}$};
\node at (-0.4,-.49) {$\scriptstyle \mathtt{b}^{*}$};
\draw[-,ultra thick,darkblue] (0.07,-0.15) to (0.07,-.4);
\draw[-,ultra thick,darkblue] (0.07,0.15) to[out=90,in=0] (-0.2,.35);
\draw[-,ultra thick,darkblue] (-0.43,0.15) to[out=90,in=180] (-0.2,.35);
\draw[-,ultra thick,darkblue] (-0.43,0.15) to[out=-90,in=90] (-0.43,-.4);
\end{tikzpicture}
}\:.
\end{align*}
This same reduction proves the theorem in the case $\mathtt{a} =
\down^{\otimes n}\otimes  \up^{\otimes n}, \mathtt{b}  =
\unit$, since it follows from the special case
$\mathtt{a}=\mathtt{b} = \up^{\otimes n}$
treated already.

Now suppose that $\mathtt{a}$ is arbitrary and $\mathtt{b} =
\unit$.
The space $\Hom_{\OBC^{f,f'}}(\mathtt{a}, \unit)$ is zero unless
$\mathtt{a}$ has $n$ letters equal to $\up$ and $n$ letters equal to
$\down$ for some $n \geq 0$. Assuming that is the case, the object
$\mathtt{a}$ is a ``shuffle'' of the tensor $\down^{\otimes n}
\otimes\up^{\otimes n}$ already
treated. Consider the minimal length permutation of tensor factors taking $\mathtt{a}$
to $\down^{\otimes n} \otimes\up^{\otimes n}$. There is corresponding isomorphism
$w:\mathtt{a} \stackrel{\sim}{\rightarrow} \down^{\otimes n}\otimes
\up^{\otimes n}$ in
$\OBC^{f,f'}$ obtained by composing various rightward crossings. Hence, we get another $\K$-module isomorphism
\begin{align*}
\Hom_{\OBC^{f,f'}}(\down^{\otimes n}\otimes \up^{\otimes n}, \unit) 
&\stackrel{\sim}{\rightarrow}
\Hom_{\OBC^{f,f'}}(\mathtt{a}, \unit),
&&
\theta \mapsto w \circ \theta.
\end{align*}
Applying this isomorphism to the basis for 
$\Hom_{\OBC^{f,f'}}(\down^{\otimes n}\otimes \up^{\otimes n}, \unit)$
already obtained at the end of the previous paragraph, we obtain a
basis for
$\Hom_{\OBC^{f,f'}}(\mathtt{a}, \unit)$. It is not quite the
same as the basis of normally ordered diagrams that we are after, but we can
slide dots to the incoming ends of the strands to obtain the
desired basis from it (up to some signs), modulo diagrams with
strictly fewer closed dots, just like in
the opening paragraph of this proof. This means that
the transition matrix between the basis in hand and the basis in mind 
is unitriangular when suitably ordered, which is all
that is needed to complete the proof.
\end{proof}

Finally we explain the connection to the cyclotomic quotients $\OBC^f$.
For this, we specialize to the case that the monic polynomial $f(u)$
from \cref{f1} has its coefficients in the original ground field $\k$.
Also 
let
\begin{equation}\label{four}
f'(u) := u^\ell + z_1' u^{\ell-2} + \cdots \in \K[u]
\end{equation}
where $\K := 
\k[z_1',\dots,z_{\lfloor \ell/2\rfloor}']$ for {\em indeterminates}
$z_1',\dots,z_{\lfloor \ell/2\rfloor}'$.
Define $\delta(u), \delta'(u)$ from \cref{ps3,ps4}.
By the identity \cref{e1},
the coefficients $\delta_1,\dots,\delta_{\lfloor \ell/2 \rfloor}$ of
$\delta(u)$
are related to the indeterminates $z_1',\dots, z_{\lfloor
  \ell/2\rfloor}'$ by a unitriangular transition matrix. Thus,
$\K$ is also freely generated by 
$\delta_1,\dots,\delta_{\lfloor \ell/2 \rfloor}$. In $\OBC^{f,f'}$,
the scalar
$\delta_r$ acts in the same way
as tensoring on the right with
 the counterclockwise bubble with $2r-1$ dots.
So we get the following corollary 
immediately from \cref{thm}.

\begin{corollary}\label{kyle}
Assume $f(u) \in \k[u]$ and $f'(u) \in \K[u]$ as in \cref{four}. 
For objects $\mathtt{a}, \mathtt{b}$ in $\OBC^{f,f'}$,
the morphism space $\Hom_{\OBC^{f,f'}}(\mathtt{a},
\mathtt{b})$
has basis as a vector space over $\k$ given by equivalence classes of 
normally ordered 
dotted oriented Brauer-Clifford diagrams with bubbles of type
$\mathtt{a}\rightarrow\mathtt{b}$
having fewer than $\ell$
closed dots on each strand.
\end{corollary}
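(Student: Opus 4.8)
The plan is to deduce \cref{kyle} from \cref{thm} by writing out the coefficient ring $\K$ explicitly. Two elementary observations carry the argument. First, since $f(u)\in\k[u]$ has coefficients $z_0=1,z_1,z_2,\dots\in\k$, the identity \cref{e1} says $z_r'=\delta_r+z_1\delta_{r-1}+\dots+z_r$ for $0\le r\le\lfloor\ell/2\rfloor$, so the passage between $(z_1',\dots,z_{\lfloor\ell/2\rfloor}')$ and $(\delta_1,\dots,\delta_{\lfloor\ell/2\rfloor})$ is a unitriangular change of variables over $\k$; hence $\K=\k[z_1',\dots,z_{\lfloor\ell/2\rfloor}']$ equals the polynomial ring $\k[\delta_1,\dots,\delta_{\lfloor\ell/2\rfloor}]$, which is free as a $\k$-module on the monomials $\delta^{\bd{m}}:=\delta_1^{m_1}\cdots\delta_{\lfloor\ell/2\rfloor}^{m_{\lfloor\ell/2\rfloor}}$ for $\bd{m}=(m_1,\dots,m_{\lfloor\ell/2\rfloor})\in\Z_{\ge 0}^{\lfloor\ell/2\rfloor}$. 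Second, the ``moreover'' clause of \cref{l1} gives $\Delta_{2r-1}-\delta_r 1_\unit\in\mathcal I_{f,f'}$ for every $r\ge 0$ (with conventions $\Delta_{-1}=1_\unit$ and $\delta_0=1$), so in $\OBC^{f,f'}$ the counterclockwise bubble $\Delta_{2r-1}$ carrying $2r-1$ closed dots equals $\delta_r 1_\unit$ as a morphism $\unit\to\unit$; consequently, for any morphism $h$ of $\OBC^{f,f'}$, adjoining such a bubble crossing-free at the right of $h$ amounts to multiplying $h$ by the scalar $\delta_r\in\K$.

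With this in hand I would invoke \cref{thm}, which furnishes a $\K$-basis $\mathcal B$ of $\Hom_{\OBC^{f,f'}}(\ob a,\ob b)$ consisting of the equivalence classes of normally ordered dotted oriented Brauer-Clifford diagrams of type $\ob a\to\ob b$ \emph{without bubbles} and with fewer than $\ell$ closed dots on each strand. Because $\K$ is free over $\k$ on the $\delta^{\bd{m}}$, the products $d\,\delta^{\bd{m}}$ with $d\in\mathcal B$ and $\bd{m}\in\Z_{\ge 0}^{\lfloor\ell/2\rfloor}$ form a $\k$-basis of $\Hom_{\OBC^{f,f'}}(\ob a,\ob b)$. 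By the second observation, $d\,\delta^{\bd{m}}$ is the morphism obtained from $d$ by adjoining, crossing-free and counterclockwise at the right, $m_1$ bubbles with one dot, $m_2$ bubbles with three dots, \dots, $m_{\lfloor\ell/2\rfloor}$ bubbles with $2\lfloor\ell/2\rfloor-1$ dots; up to equivalence this is exactly a normally ordered dotted oriented Brauer-Clifford diagram with bubbles of type $\ob a\to\ob b$ having fewer than $\ell$ closed dots on each strand, bubbles included (since $2\lfloor\ell/2\rfloor-1<\ell$). Conversely, a normally ordered bubble carries no open dot and an odd number of closed dots, and the odd integers $<\ell$ are exactly $1,3,\dots,2\lfloor\ell/2\rfloor-1$; moreover, by the equivalence relation of \cref{D:N.O. dotted OBC diagram}, the class of a normally ordered diagram with bubbles is determined by its underlying bubble-free diagram together with the multiset of closed-dot counts occurring on its bubbles. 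Hence $(d;\bd{m})\mapsto d\,\delta^{\bd{m}}$ is a bijection from $\mathcal B\times\Z_{\ge 0}^{\lfloor\ell/2\rfloor}$ onto the set of equivalence classes of normally ordered dotted oriented Brauer-Clifford diagrams with bubbles of type $\ob a\to\ob b$ having fewer than $\ell$ closed dots on each strand; this set is therefore a $\k$-basis, which proves \cref{kyle}.

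I do not anticipate a real obstacle; all the content is in \cref{thm}, and what remains is bookkeeping. The step to check most carefully is the bijectivity just asserted — that translating a bubble-free basis diagram together with a monomial in the $\delta_r$'s into a diagram with bubbles neither loses nor duplicates equivalence classes — which reduces to a careful reading of the equivalence relation in \cref{D:N.O. dotted OBC diagram} (an unordered collection of crossing-free counterclockwise bubbles at the right edge, each recorded only by its closed-dot count). Finally, \cref{Cyclotomic basis conjecture} is recovered from \cref{kyle}: for $f$ and $f'$ as in \cref{four} the quotient functor $\AOBC\to\OBC^{f,f'}$ kills $f(\xdot)$ and hence factors through a $\k$-linear full functor $\OBC^f\to\OBC^{f,f'}$ (full because $\K=\k[\delta_1,\dots,\delta_{\lfloor\ell/2\rfloor}]$ and each $\delta_r$ acts by adjoining a bubble, so every morphism of $\OBC^{f,f'}$ is a $\k$-combination of images of diagrams of $\AOBC$); therefore $\dim_\k\Hom_{\OBC^f}(\ob a,\ob b)\ge\dim_\k\Hom_{\OBC^{f,f'}}(\ob a,\ob b)$, which by \cref{kyle} equals the number of normally ordered dotted oriented Brauer-Clifford diagrams with bubbles of the allowed shape, and since those already span $\Hom_{\OBC^f}(\ob a,\ob b)$ by the argument recalled in \cref{SS:cyclotomic bases theorems} they must form a $\k$-basis of it too.
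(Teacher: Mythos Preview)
Your proof of \cref{kyle} is correct and is exactly the paper's argument written out in full: the paper simply observes that $\K=\k[\delta_1,\dots,\delta_{\lfloor\ell/2\rfloor}]$ via the unitriangular change \cref{e1}, that each $\delta_r$ acts as the right-tensor by the $(2r-1)$-dotted bubble, and then says the corollary is immediate from \cref{thm}; you have merely unpacked the bijection between $\mathcal B\times\Z_{\ge 0}^{\lfloor\ell/2\rfloor}$ and the normally ordered diagrams with bubbles.

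One remark on your final paragraph deducing \cref{Cyclotomic basis conjecture}: the detour through fullness and dimension inequalities is unnecessary and, as stated, is not airtight since the Hom-spaces are infinite-dimensional over $\k$ (a spanning set whose cardinality equals the dimension need not be a basis in that setting). The paper's argument is more direct: the functor \cref{cyclotomic quotient isomorphism} sends the spanning set of $\Hom_{\OBC^f}(\ob a,\ob b)$ (the normally ordered diagrams with bubbles, $<\ell$ dots per strand) element-by-element onto the $\k$-basis of $\Hom_{\OBC^{f,f'}}(\ob a,\ob b)$ supplied by \cref{kyle}, so that spanning set is automatically linearly independent upstairs. This avoids any cardinality comparison.
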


Continuing with $f(u) \in \k[u]$, 
the $\k$-linear supercategory $\OBC^f$ 
is the quotient of $\AOBC$ by
the $\k$-linear left tensor ideal generated by 
$f\left(\mathord{
\begin{tikzpicture}[baseline = -0.5mm]
	\draw[->,thick,darkblue] (0.08,-.2) to (0.08,.3);
      \node at (0.08,0.05) {$\color{darkblue}\bullet$};
\end{tikzpicture}}\right)$.
The composition of the natural 
$\k$-linear superfunctor $\AOBC \rightarrow \AOBC_\K$
followed by the quotient functor $\AOBC_\K \rightarrow \OBC^{f,f'}$
factors through $\OBC^f$ to induce a $\k$-linear superfunctor
\begin{equation}\label{cyclotomic quotient isomorphism}
\OBC^f \rightarrow \OBC^{f,f'}.
\end{equation}
This is the identity on objects. On morphisms, it is noted already in \cref{SS:cyclotomic bases theorems}  that $\Hom_{\OBC^f}(\mathtt{a}, \mathtt{b})$ is
spanned as a vector space over $\k$ by the 
 equivalence classes of 
normally ordered 
dotted oriented Brauer-Clifford diagrams with bubbles of type
$\mathtt{a}\rightarrow\mathtt{b}$
having fewer than $\ell$
closed dots on each strand; the proof is the same argument as in the
first paragraph of the proof of Theorem~\ref{thm}.
The images of these spanning morphisms under the superfunctor \cref{cyclotomic quotient isomorphism} give a basis for
$\Hom_{\OBC^{f,f'}}(\mathtt{a}, \mathtt{b})$ thanks to
Corollary~\ref{kyle}.
Hence, they are already linearly independent in
$\Hom_{\OBC^f}(\mathtt{a}, \mathtt{b})$, and \cref{cyclotomic quotient isomorphism} is an isomorphism
on morphism spaces.
We have proved:

\begin{corollary}\label{kyle2}
The $\k$-linear superfunctor \cref{cyclotomic quotient isomorphism} is an isomorphism.
\end{corollary}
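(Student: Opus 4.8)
The superfunctor \cref{cyclotomic quotient isomorphism} is the identity on objects, so the plan is to show it is bijective on each morphism space $\Hom_{\OBC^f}(\mathtt a,\mathtt b)\to\Hom_{\OBC^{f,f'}}(\mathtt a,\mathtt b)$; a superfunctor that is bijective on objects and on all Hom-spaces is an isomorphism of $\k$-linear supercategories. First I would record the surjectivity (fullness) part, and then deduce injectivity (faithfulness) formally from the basis theorem already in hand.

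For surjectivity, I would run the spanning argument from the opening paragraph of the proof of \cref{thm} verbatim inside $\OBC^f$ --- this was already flagged in \cref{SS:cyclotomic bases theorems}. Concretely: using \cref{prop: OBC slides,prop: AOBC slides} to move closed and open dots past crossings and other dots (up to signs and lower-order terms), \cref{prop: bubble reducing} together with the last clause of \cref{l1} to replace dotted bubbles pushed to the right edge by scalars, and the relation $\mathtt b\otimes\mathtt a\otimes f(\xdot)=0$ in $\OBC^f$ to reduce any strand carrying $\ell$ closed dots, one shows by induction on the total number of closed dots that $\Hom_{\OBC^f}(\mathtt a,\mathtt b)$ is spanned over $\k$ by the set $S$ of equivalence classes of normally ordered dotted oriented Brauer-Clifford diagrams with bubbles of type $\mathtt a\to\mathtt b$ having fewer than $\ell$ closed dots on each strand. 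Since \cref{cyclotomic quotient isomorphism} sends each diagram in $S$ to the corresponding diagram in $\OBC^{f,f'}$, and by \cref{kyle} those images already form a $\k$-basis of $\Hom_{\OBC^{f,f'}}(\mathtt a,\mathtt b)$, the map is onto.

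Injectivity is then immediate: a spanning set of a $\k$-vector space whose image under a $\k$-linear map is linearly independent is itself linearly independent and is mapped injectively (if $\sum_s\lambda_s s$ were a nonzero element of the kernel, then $\sum_s\lambda_s\cdot(\text{image of }s)=0$ would contradict independence of the images). Hence $S$ is in fact a $\k$-basis of $\Hom_{\OBC^f}(\mathtt a,\mathtt b)$ and \cref{cyclotomic quotient isomorphism} restricts to an isomorphism $\Hom_{\OBC^f}(\mathtt a,\mathtt b)\arup{\sim}\Hom_{\OBC^{f,f'}}(\mathtt a,\mathtt b)$ on each Hom-space, so it is an isomorphism of $\k$-linear supercategories.

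The one thing I would verify before all this is that \cref{cyclotomic quotient isomorphism} is well defined, i.e.\ that the composite $\AOBC\to\AOBC_\K\to\OBC^{f,f'}$ annihilates $f(\xdot)$ and hence the $\k$-linear left tensor ideal it generates; but this is automatic, since $f(\xdot)$ is one of the generators of $\mathcal I_{f,f'}$. There is really no hard step left at this point: the substantive work is all upstream, in \cref{l2} (which identifies $\End_{\OBC^{f,f'}}(\up^{\otimes n})$ with the cyclotomic Sergeev superalgebra $\Serg_n^f$) and in \cref{thm} and \cref{kyle} (the basis theorem for $\OBC^{f,f'}$ over $\K=\k[z_1',\dots,z_{\lfloor\ell/2\rfloor}']$, which uses that $\K$ is freely generated also by $\delta_1,\dots,\delta_{\lfloor\ell/2\rfloor}$ via the unitriangular transition \cref{e1}, so that the action of the scalar $\delta_r$ coincides with tensoring on the right by the counterclockwise bubble with $2r-1$ dots). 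Given those inputs, the present corollary is bookkeeping; the only mild subtlety to keep in mind is that $\OBC^f$ lives over $\k$ while $\OBC^{f,f'}$ lives over the larger ring $\K$, which is harmless precisely because we compare them merely as $\k$-linear supercategories.
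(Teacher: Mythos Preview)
Your approach is exactly the paper's: exhibit a spanning set $S$ for $\Hom_{\OBC^f}(\mathtt a,\mathtt b)$, observe that its image under \cref{cyclotomic quotient isomorphism} is the $\k$-basis of $\Hom_{\OBC^{f,f'}}(\mathtt a,\mathtt b)$ from \cref{kyle}, and conclude that the map on Hom-spaces is a bijection.

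There is one slip in your spanning argument. You invoke ``the last clause of \cref{l1} to replace dotted bubbles pushed to the right edge by scalars'' inside $\OBC^f$. That is not legitimate: \cref{l1} concerns the ideal $\mathcal I_{f,f'}$ in $\AOBC_\K$, and in $\OBC^f$ (which is the quotient of $\AOBC$ over $\k$ by the left tensor ideal generated by $f(\xdot)$ alone) the dotted bubbles are \emph{not} scalars. Fortunately this step is unnecessary and inconsistent with your own definition of $S$, which correctly includes bubbles. The spanning argument you need in $\OBC^f$ is the one sketched in \cref{SS:cyclotomic bases theorems} (equivalently, the first paragraph of the proof of \cref{thm} minus the bubble-to-scalar step): slide dots, normalize bubbles via \cref{prop: bubble reducing}, and use only $f(\xdot)=0$ to reduce any strand carrying $\ell$ closed dots. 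With that correction your proof matches the paper's.
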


\cref{kyle,kyle2} together imply \cref{Cyclotomic basis conjecture}.

\bibliographystyle{alphanum}    
\bibliography{references}   

\end{document}